\numberwithin{equation}{section}
\theoremstyle{plain}
\newtheorem{theorem}{Theorem}[section]
\newtheorem*{theorem*}{Theorem}
\newtheorem{lemma}[theorem]{Lemma}
\newtheorem{proposition}[theorem]{Proposition}
\theoremstyle{definition}
\theoremstyle{remark}
\newtheorem{remark}{Remark}
\renewcommand{\Re}{\operatorname{Re}}
\renewcommand{\Im}{\operatorname{Im}}
\newcommand{\diag}{\operatorname{diag}}
\renewcommand{\mod}{\operatorname{mod}\ }
\newcommand{\bs}{\backslash}
\newcommand{\la}{\langle}
\newcommand{\ra}{\rangle}
\newcommand{\cA}{\mathcal{A}}
\newcommand{\cB}{\mathcal{B}}
\newcommand{\cC}{\mathcal{C}}
\newcommand{\cD}{\mathcal{D}}
\newcommand{\cE}{\mathcal{E}}
\newcommand{\cL}{\mathcal{L}}
\newcommand{\cM}{\mathcal{M}}
\newcommand{\cR}{\mathcal{R}}
\newcommand{\cS}{\mathcal{S}}
\newcommand{\cT}{\mathcal{T}}
\newcommand{\cV}{\mathcal{V}}
\newcommand{\cW}{\mathcal{W}}
\newcommand{\fa}{\mathfrak{a}}
\newcommand*{\bbC}{\ensuremath{\mathbb{C}}}
\newcommand*{\bbR}{\ensuremath{\mathbb{R}}}
\newcommand*{\bbZ}{\ensuremath{\mathbb{Z}}}
\newcommand*{\bbN}{\ensuremath{\mathbb{N}}}
\newcommand*{\bbH}{\ensuremath{\mathbb{H}}}
\newcommand{\D}{\Delta}
\newcommand{\ve}{\varepsilon}
\def\@tocline#1#2#3#4#5#6#7{\relax
  \ifnum #1>\c@tocdepth 
  \else
    \par \addpenalty\@secpenalty\addvspace{#2}%
    \begingroup \hyphenpenalty\@M
    \@ifempty{#4}{%
      \@tempdima\csname r@tocindent\number#1\endcsname\relax
    }{%
      \@tempdima#4\relax
    }%
    \parindent\z@ \leftskip#3\relax \advance\leftskip\@tempdima\relax
    \rightskip\@pnumwidth plus4em \parfillskip-\@pnumwidth
    #5\leavevmode\hskip-\@tempdima
      \ifcase #1
       \or\or \hskip 1em \or \hskip 2em \else \hskip 3em \fi%
      #6\nobreak\relax
    \hfill\hbox to\@pnumwidth{\@tocpagenum{#7}}\par
    \nobreak
    \endgroup
  \fi}
\begin{document}

\title[Hybrid subconvexity bounds for twisted $L$-functions on $GL(3)$]
        {Hybrid subconvexity bounds for twisted $L$-functions on $GL(3)$}
\author{Bingrong Huang}
\address{School of Mathematics \\ Shandong University \\ Jinan \\Shandong 250100 \\China}
\email{bingronghuangsdu@gmail.com}
\address{Current address: School of Mathematical Sciences \\ Tel Aviv University \\ Tel Aviv 69978 \\ Israel}
\date{\today}
\thanks{Project is partly supported by NSFC grant 11531008 and
IRT\_16R43 from the Ministry of Education, China.}

\begin{abstract}
  Let $q$ be a large prime, and $\chi$ the quadratic character modulo $q$.
  Let $\phi$ be a self-dual Hecke--Maass cusp form for $SL(3,\bbZ)$,
  and $u_j$ a Hecke--Maass cusp form 
  for $\Gamma_0(q)\subseteq SL(2,\bbZ)$
  with spectral parameter $t_j$.
  We prove, for the first time, some hybrid subconvexity bounds for the twisted $L$-functions on $GL(3)$, such as
  \[
    L(1/2,\phi\times u_j\times\chi)\ll_{\phi,\ve} (q(1+|t_j|))^{3/2-\theta+\ve},\quad
    L(1/2+it,\phi\times\chi)\ll_{\phi,\ve} (q(1+|t|))^{3/4-\theta/2+\ve},
  \]
  for any $\ve>0$, where $\theta=1/23$ is admissible.
  The proofs depend on the first moment of a family of $L$-functions in short intervals. In order to bound this moment, we first use the approximate functional equations, the Kuznetsov formula, and the Voronoi formula to transform it to a complicated summation; and then we  apply different methods to estimate it, which give us strong bounds in different aspects. We also use the stationary phase method and the large sieve inequalities.
\end{abstract}

\keywords{$L$-functions, subconvexity, $GL(3)$, twisted, quadratic character}
\subjclass{11F66, 11F67, 11M41}
\maketitle
\setcounter{tocdepth}{1}
{\small\tableofcontents}

\section{Introduction} \label{sec: Introduction}

Bounding $L$-functions on their critical lines is one of the central problems
in analytic number theory.
For $GL(1)$ $L$-functions,
subconvexity bounds are due to Weyl~\cite{weyl1921abschatzung} in the $t$-aspect,
and Burgess~\cite{burgess1963character} in the $q$-aspect.
Hybrid bounds for Dirichlet $L$-functions are given by
Heath-Brown~\cite{heath1978hybrid,heath1980hybrid}.
For $GL(2)$ $L$-functions,
in the weight aspect, this was achieved in Peng~\cite{peng2001zeros}.
In the conductor aspect, Conrey--Iwaniec~\cite{conrey2000cubic} used
the cubic moment to give a strong subconvexity bound.
And recently, Young~\cite{young2014weyl} generalized their method to obtain
a Weyl-type hybrid subconvexity bounds for twisted $L$-functions.
In the level aspect, this was first given by
Duke--Friedlander--Iwaniec~\cite{duke1994bounds}.
Subconvexity bounds for Rankin--Selberg $L$-functions on $GL(2)\times GL(2)$ were known
due to Sarnak~\cite{sarnak2001estimates}, Kowalski--Michel--Vanderkam~\cite{kowalski2002rankin},
and Lau--Liu--Ye~\cite{lau2006new}, etc.
Now for $L$-functions on $GL(1)$ and $GL(2)$, this was solved completely,
due to the work of Michel--Venkatesh~\cite{michel2010subconvexity} and many other important
contributions on the way.
For $GL(3)$ $L$-functions,
Li~\cite{li2011bounds} gave the first subconvexity bound in the $t$-aspect for self-dual forms.
Recently, McKee--Sun--Ye~\cite{mckee2015improved} improved Li's results.
Blomer~\cite{blomer2012subconvexity} considered the conductor aspect for twisted $L$-functions on $GL(3)$.
On the other hand, in a series of papers~\cite{munshi2015circle2,munshi2015circle3,munshi2015circle4},
Munshi used the circle method and $GL(3)$ Voronoi formula to give the subconvexity bounds.
So far, there are mainly two methods to solve the subconvexity problem for $GL(3)$ $L$-functions:
the moment method and the circle method. They work in different situations.

In this paper, we consider certain types of twisted $L$-functions
of degree $3$ and $6$ in both $q$ and $t$ aspects.
More precisely, let $q$ be a large prime, and
$\chi$ the primitive quadratic character modulo $q$.
Let $u_j$ be an even Hecke--Maass cusp newform with spectral parameter
$t_j$ of level $q'|q$.
We denote the Hecke eigenvalues by $\lambda_j(n)$.
Let $\phi$ be a self-dual Hecke--Maass form of type $(\nu,\nu)$
for $SL(3,\bbZ)$, with Fourier coefficients $A(m,n)=A(n,m)$, normalized
so that the first Fourier coefficient $A(1,1)=1$.
We define the $L$-function
\begin{equation*}
  L(s,\phi) = \sum_{n=1}^{\infty} \frac{A(1,n)}{n^s},
\end{equation*}
for $\Re(s)>1$. The twisted $L$-functions
\begin{equation*}\label{eqn: L(s,phi.chi)}
  L(s,\phi\times\chi) = \sum_{n=1}^{\infty} \frac{A(1,n)\chi(n)}{n^s}
\end{equation*}
is defined for $\Re(s)>1$, and can be continued to an entire function
with a functional equation of conductor $q^3$.
Similarly, we define the Rankin--Selberg $L$-function
\begin{equation*}\label{eqn: L(s,phi.u.chi)}
  L(s,\phi\times u_j\times\chi) = \sum_{m\geq1}\sum_{n\geq1}\frac{A(m,n)\lambda_j(n)\chi(n)}{(m^2n)^s},
\end{equation*}
for $\Re(s)>1$, and can be continued to an entire function with conductor $q^6$.

Our main result is
\begin{theorem}\label{thm: main}
  With notation as above, we have
  \begin{equation*}
    L(1/2,\phi\times u_j\times\chi) \ll_{\phi,\ve} (q(1+|t_j|))^{3/2-\theta+\ve},
  \end{equation*}
  and
  \begin{equation*}
    L(1/2+it,\phi\times\chi) \ll_{\phi,\ve} (q(1+|t|))^{3/4-\theta/2+\ve},
  \end{equation*}
  for any $\ve>0$, where $\theta=(35-\sqrt{1057})/56$. 
\end{theorem}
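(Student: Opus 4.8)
The plan is to attack both bounds simultaneously through a single conductor-lowering estimate obtained by combining the $GL(3)$ Voronoi summation formula with the spectral theory of $GL(2)$ automorphic forms, i.e.\ a moment method rather than the delta-method. First I would use an approximate functional equation to express $L(1/2,\phi\times u_j\times\chi)$ as a sum of length roughly $N\asymp (qt_j)^3$ in the variable $m^2 n$, split dyadically, so the core object becomes
\begin{equation}\label{eqn: core sum}
  S = \sum_{m} \sum_{n} A(m,n)\,\l_j(n)\,\chi(n)\, V\!\le(\f{m^2 n}{N}\ri),
\end{equation}
and then embed $u_j$ into a short spectral average over the level-$q$ Maass spectrum (plus Eisenstein contribution) so that Kuznetsov's formula can be applied to the $\l_j(n)$-sum. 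The convexity bound corresponds to $S\ll N^{1/2+\ve}$ after the approximate functional equation, and the task is to beat this by $(qt_j)^{\theta}$.

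The key steps, in order: (i) open the $\l_j(n)$-factor by Hecke multiplicativity and apply the Kuznetsov trace formula on $\Gamma_0(q)$, converting the $n$-sum into a diagonal term plus a sum of Kloosterman sums $S(n,\ell;cq)$ weighted by a Bessel transform of the spectral weight; (ii) in the Kloosterman-sum term, detect the congruence structure and apply the $GL(3)$ Voronoi formula of Miller--Schmidt (as specialized to the twist by $\chi$) in the variable $n$, which replaces $A(m,n)$ by its dual $A(n',m)$ and contracts the length of the $n$-sum from $\sim N/m^2$ down to roughly its square-root times the conductor, at the cost of producing hyper-Kloosterman sums $\mathrm{Kl}_3$; (iii) evaluate the resulting character/Kloosterman sum over the modulus — this is where the quadratic character $\chi$ mod $q$ interacts with the Kloosterman modulus $cq$, and one expects square-root cancellation (Weil/Deligne bounds for $\mathrm{Kl}_3$ and Gauss-sum evaluations, using primality of $q$); (iv) bound the remaining smooth oscillatory integral by stationary phase / repeated integration by parts to localize $c$ and $n'$, then estimate the arithmetic sum trivially with the Weil bounds and the Rankin--Selberg bound $\sum_{m,n\le X}|A(m,n)|^2\ll_\phi X^{1+\ve}$; (v) optimize the dyadic ranges and the length of the spectral average. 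The value $\theta=(35-\sqrt{1057})/56$ is precisely the root of the quadratic that balances the diagonal contribution against the off-diagonal (dual) contribution in this optimization.

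For the second bound, $L(1/2+it,\phi\times\chi)\ll (qt)^{3/4-\theta/2+\ve}$, I would \emph{not} redo the analysis from scratch: the degree-$3$ $L$-function is (up to archimedean factors) the ``$t_j\to\infty$'' or Eisenstein degeneration of the degree-$6$ one, so one applies the same Voronoi-plus-Kuznetsov machinery with the cuspidal spectral sum replaced by the continuous spectrum contribution $\int |\zeta(1/2+i\tau+\cdots)|^2\,d\tau$ localized near the relevant point; the conductor drops from $q^6$ to $q^3$, the exponent $3/2$ becomes $3/4$, and the same saving $\theta$ becomes $\theta/2$ after taking the square root of the shorter Dirichlet series. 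Equivalently, one can deduce it by positivity from the first bound applied to a suitable Eisenstein series $u_j$, bounding $|L(1/2+it,\phi\times\chi)|^2$ from above by $L(1/2,\phi\times E_\infty(\cdot,1/2+it)\times\chi)$ up to harmless factors.

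The main obstacle I anticipate is step (ii)--(iii): after $GL(3)$ Voronoi the modulus of the resulting exponential sum is a product $c q$ with $q$ prime but $c$ ranging over a dyadic interval that may share no coprimality with the Voronoi modulus, so one must carefully track the interaction of the two independent moduli (the Kuznetsov modulus $cq$ and the Voronoi modulus, which after the twist by $\chi$ carries an extra factor $q$), separate the $q$-part from the $c$-part by the Chinese Remainder Theorem, and show that the $q$-part is a complete character sum admitting square-root cancellation even after the stationary-phase weight is inserted — a clean saving here is exactly what is needed to avoid losing a factor of $q^{1/2}$ and hence to get \emph{hybrid} (rather than merely $t$- or $q$-aspect) subconvexity. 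Controlling the archimedean Bessel transforms uniformly in both $t_j$ and $q$ (so that the same $\theta$ works in the hybrid range) is the secondary technical hurdle.
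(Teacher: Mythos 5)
Your overall framework (approximate functional equation, spectral average over the level-$q$ spectrum, Kuznetsov, $GL(3)$ Voronoi, positivity to recover an individual value) is the right one, but the proposal as written has a genuine gap: a \emph{single} pass of Kuznetsov plus one application of Voronoi followed by Weil/Deligne bounds and a large-sieve or trivial estimation of the arithmetic sum --- which is what your steps (i)--(v) describe --- only yields a bound of the shape $q^{5/4+\ve}t_j^{3/2+\ve}$ (the paper's Theorem \ref{thm: q}). That bound is subconvex in $q$ but only \emph{convex} in $t_j$, so it cannot by itself produce a hybrid exponent $(qt_j)^{3/2-\theta}$; when $q$ is small compared to $t_j$ it gives nothing. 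The missing idea is a \emph{second} application of the $GL(3)$ Voronoi formula: one must first expand the Bessel transform $H^+$ asymptotically (Lemma \ref{lemma: W asymp}, following Li and Mckee--Sun--Ye), apply Voronoi once to dualize the $n$-sum, run an $n$th-order stationary-phase expansion of the resulting integral transform, and then apply Voronoi \emph{again} to exploit the residual oscillation $e\bigl(-T^2c/(4\pi^2\sqrt{\delta y})\bigr)$ left over from the first pass. This second dualization is what produces the $t$-aspect saving (Theorem \ref{thm: t}), and the hybrid theorem is obtained by \emph{combining} the two regimes: the $q$-strong bound when $q\geq T^{\theta/(1/4-\theta)}$, and the (suitably modified) $t$-strong argument with $M\gg T^{1/2-\theta}$ otherwise.

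Relatedly, your account of where $\theta=(35-\sqrt{1057})/56$ comes from is not correct: it is not the root of a diagonal-versus-off-diagonal balance inside one optimization, but the crossover exponent matching $q^{5/4}T^{3/2}\leq (qT)^{3/2-\theta}$ against the constraint $q\leq T^{(3/4-9\theta/2)/(5/2+\theta)}$ that emerges from the second-Voronoi estimate of $\tilde{\cR}_j^{\dag,2}$. Without the two-method structure there is no quadratic to solve. Two smaller points: to pass from the first spectral moment back to an individual $L(1/2,\phi\times u_j\times\chi)$ you must invoke Lapid's nonnegativity theorem in the cuspidal case as well, not only in the Eisenstein degeneration; and the arithmetic modulus analysis in the $q$-aspect is carried out not via $\mathrm{Kl}_3$ bounds alone but via the Conrey--Iwaniec character sum $H(w;q)$ together with a hybrid large sieve over characters modulo $qc$, which is what keeps the $q$-saving uniform over the spectral window.
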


In order to prove Theorem \ref{thm: main}, we will use
two different methods to show the following two theorems.
And  with some modifications, we will give the proof of Theorem \ref{thm: main}
in the end of \S\ref{sec: MT}.

\begin{theorem}\label{thm: q}
  With notation as above, we have
  \begin{equation*}
    L(1/2,\phi\times u_j\times\chi) \ll_{\phi,\ve} q^{5/4+\ve}(1+|t_j|)^{3/2+\ve},
  \end{equation*}
  and
  \begin{equation*}
    L(1/2+it,\phi\times\chi) \ll_{\phi,\ve} q^{5/8+\ve}(1+|t|)^{3/4+\ve},
  \end{equation*}
  for any $\ve>0$.
\end{theorem}

and

\begin{theorem}\label{thm: t}
  With notation as above, we have
  \begin{equation*}
    L(1/2,\phi\times u_j\times\chi) \ll_{\phi,\ve} q^{4+\ve}(1+|t_j|)^{4/3+\ve},
  \end{equation*}
  and
  \begin{equation*}
    L(1/2+it,\phi\times\chi) \ll_{\phi,\ve} q^{2+\ve}(1+|t|)^{2/3+\ve},
  \end{equation*}
  for any $\ve>0$.
\end{theorem}


\begin{remark}
  Theorem \ref{thm: main} gives the first hybrid subconvexity
  bound for $GL(3)$ $L$-functions.
  Note that the convexity bound for $L(1/2,\phi\times u_j\times\chi)$
  is $(q(1+|t_j|))^{3/2+\ve}$,
  and for $L(1/2+it,\phi\times\chi)$ is $(q(1+|t|))^{3/4+\ve}$.
  Theorem \ref{thm: q} is crucial, which is a generalization of Blomer's
  results in~\cite{blomer2012subconvexity}, since the bounds there are
  subconvexity in the $q$-aspect and convexity in the $t$-aspect.
  So any bound which is subconvexity in terms of $t$
  and of polynomial growth in terms of $q$ is sufficient to
  get a hybrid subconvexity bound by combining with Theorem \ref{thm: q}.
  Theorem \ref{thm: t} is a generalization of Li's
  results in~\cite{li2011bounds} and McKee--Sun--Ye's
  improvements in~\cite{mckee2015improved}.
\end{remark}

\begin{remark}
  We emphasize that we do not expect our theorems
  to be optimal. For example, the bounds in the $q$-aspect in Theorem \ref{thm: t}
  may be improved if one can use the large sieve inequality for the $d_2'$-sum
  in \eqref{eqn: R_j^d}, and so can Theorem \ref{thm: main}.
\end{remark}

\begin{remark}
  Let $f$ be a weight $2k$ holomorphic modular form for $\Gamma_0(q)$.
  One may prove
  \begin{equation*}
    L(1/2,\phi\times f\times \chi)\ll_{\phi,\ve} (qk)^{3/2-\theta+\ve}.
  \end{equation*}
  The proof of the above result is similar to Theorem \ref{thm: main},
  see Li~\cite[Appendix]{li2011bounds} for example.
  One can also think about the hybrid subconvexity bounds for $GL(3)$ $L$-functions
  in other cases, such as Munshi~\cite{munshi2013bounds1,munshi2013bounds3,munshi2015circle4}.
\end{remark}

We give a brief outline of proofs of our theorems.
Let $T \asymp t_j \asymp t$ be large positive numbers.
In our work, we will assume $q\ll T^B$, for some fixed $B>0$.
Note that Blomer's method showed an upper bound of the form $T^A q^{5/4+\ve}$.
To prove our theorems, the basic idea is similar to
Li~\cite{li2011bounds} and Blomer~\cite{blomer2012subconvexity}.
We consider the average of $L(1/2,\phi\times u_j\times \chi)$
over the spectrum of the Laplacian on $\Gamma_0(q)\bs\bbH$,
see Proposition \ref{prop: q} and \ref{prop: t} below.
And then our results follow from a theorem of Lapid~\cite{lapid2003nonnegativity},
which shows that $L(1/2,\phi\times u_j\times \chi)$ is always a non-negative
real number. (We can drop all but one term to obtain an individual bound;
similarly for $L(1/2+it,\phi\times\chi)$.)
To prove Proposition \ref{prop: q}, which is strong in $q$-aspect,
after applying the approximate functional equations for the Rankin--Selberg
$L$-functions, the $GL(2)$ Kuznetsov formula,
and the $GL(3)$ Voronoi formula, we are led to
bound $\cS_\sigma(q,N;\delta)$, see \eqref{eqn: cS_s}.
To estimate $\cS_\sigma(q,N;\delta)$, we will use the hybrid large
sieve inequality and many results in Conrey--Iwaniec~\cite{conrey2000cubic},
Blomer~\cite{blomer2012subconvexity}, and Young~\cite{young2014weyl}.
This is inspired by Young~\cite{young2014weyl}.
However, this will not give us subconvexity bounds in both
$q$ and $t$ aspects. 
In order to prove results as in Theorem \ref{thm: main},
we still need to handle the case $q$ is much smaller than $t$. 
That is, we will need a result as in Theorem \ref{thm: t},
which is strong in the $t$-aspect and will follow from
Proposition \ref{prop: t}. Now, to prove Proposition \ref{prop: t},
it turns out that Li's method still works.
The key point here is that we can have a second application
of the Voronoi formula. To get a better bound, we will also use
an $n$th-order asymptotic expansion of a weighted stationary phase
integral as McKee--Sun--Ye~\cite{mckee2015improved} did.
Throughout the paper, $e(x)$ means $e^{2\pi ix}$,
negligible means $O(T^{-A})$ for any $A>0$,
and $\ve$ is an arbitrarily small positive number which
may not be the same in each occurrence.


This paper is organized as follows. In \S\ref{sec: preliminaries}, we introduce some notation and present some lemmas we will need later. A setup of our first main result is given in \S\ref{sec: setup q}, which leads us to consider the weight function $\Psi_{\sigma}^{\pm}$ and the character sum $\mathcal T^{\pm,\sigma}$ (see \eqref{eqn: cS_s} below). We then apply some analytic methods to separate variables in the weight function in \S\ref{sec: Psi}, and use the hybrid large sieve inequality to give an inequality involving the character sums in \S\ref{sec: large_sieve}. In \S\ref{sec: thm q}, we give the proof of Theorem \ref{thm: q}.
Then we switch to another method to estimate the first moment. After a simple setup in \S\ref{sec: setup t}, we handle the new weight function in \S\ref{sec: Psi II} and bound the error terms in \S\ref{sec: ET}. Finally, the proofs of the other main theorems are given in \S\ref{sec: MT}.

\section{Preliminaries}\label{sec: preliminaries}

In this section, we introduce notation and recall some standard facts of
automorphic forms on $GL(2)$ and $GL(3)$.

\subsection{Automorphic forms}

We start by reviewing automorphic forms for $\Gamma_0(q)$.
Let $\mathbb{H}$ be the upper half-plane.
Let $\cA(\Gamma_0(q)\bs\bbH)$ denote the space of automorphic functions of weight zero, i.e.,
the functions $f:\bbH\rightarrow\bbC$ which are $\Gamma_0(q)$-periodic.
Let $\cL(\Gamma_0(q)\bs\bbH)$ denote the subspace of square-integrable functions
with respect to the inner product
\begin{equation*}\label{eqn: inner product}
  \la f,g \ra = \int_{\Gamma_0(q)\bs\bbH} f(z)\overline{g(z)}d\mu z,
\end{equation*}
where $d\mu z = y^{-2}dxdy$ is the invariant measure on $\bbH$.
The Laplace operator
\begin{equation*}
  \D = -y^2\left(\frac{\partial^2}{\partial x^2} + \frac{\partial^2}{\partial y^2} \right)
\end{equation*}
acts in the dense subspace of smooth functions in $\cL(\Gamma_0(q)\bs\bbH)$ such that $f$ and
$\D f$ are both bounded; it has a self-adjoint extension which yields the spectral
decomposition
$$
  \cL(\Gamma_0(q)\bs\bbH) = \bbC\oplus\cC(\Gamma_0(q)\bs\bbH)\oplus\cE(\Gamma_0(q)\bs\bbH).
$$
Here $\bbC$ is the space of constant functions,
$\cC(\Gamma_0(q)\bs\bbH)$ is the space of cusp forms and
$\cE(\Gamma_0(q)\bs\bbH)$ is the space of Eisenstein series.

We choose an orthonormal basis $\cB(q)$ of even Hecke--Maass forms of level $q$ as follows:
for each even newform $u_j$ of level $q'|q$ we choose an orthonormal basis $\cV(u_j)$
of the space generated by $\{u_j(dz):d|(q/q')\}$ containing $u_j/\|u_j\|$,
and let $\cB(q)$ be the union of all $\cV(u_j)$ for $u_j$ ranging over the newforms of level dividing $q$.
Let $\cB^*(q)$ be the subset of all newforms in $\cB(q)$.
Each $u_j\in\cB(q)$ with spectral parameter $t_j$ has a Fourier expansion
\[
  u_j(z) = \sum_{n\neq0} \rho_j(n)W_{s_j}(nz),
\]
where $W_s(z)$ is the $GL(2)$ Whittaker function given by
\[
  W_s(z) := 2|y|^{1/2} K_{s-1/2}(2\pi|y|)e(x),
\]
and $K_s(y)$ is the $K$-Bessel function with $s=1/2+it$.
We have the Hecke operators acting on $u_j$ with
\begin{equation*}\label{eqn: HO}
  (T_n u_j)(z) := \frac{1}{\sqrt{n}}\sum_{ad=n}
                    \sum_{b(\mod d)}u_j\left(\frac{az+b}{d}\right)
                = \lambda_j(n)u_j(z),
\end{equation*}
for all $n$ with $(n,q)=1$.
We have
\[
  \rho_j(\pm n) = \rho_j(\pm1)\lambda_j(n)n^{-1/2},
\]
if $n>0$ and $(n,q)=1$. Moreover, the reflection operator $R$ defined by $(Ru_j)(z)=u_j(-\bar{z})$
commutes with $\Delta$ and all $T_n$, so that we can also require
\begin{equation*}\label{eqn: Ru_j}
  Ru_j=\epsilon_j u_j.
\end{equation*}
Since $R$ is an involution, the space $\cC(\Gamma_0(q)\bs\bbH)$ is split into
even and odd cusp forms according to $\epsilon_j=1$ and $\epsilon_j=-1$.
We define
\begin{equation*}\label{eqn: omega_j}
  \omega_j := \frac{4\pi}{\cosh(\pi t_j)}|\rho_j(1)|^2.
\end{equation*}
By~\cite[Theorem 2]{iwaniec1990small}, we have
\begin{equation}\label{eqn: omega^*_j}
  \omega_j^*:=\frac{4\pi}{\cosh(\pi t_j)}\sum_{f\in\cV(u_j)}|\rho_f(1)|^2 \gg q^{-1}(qt_j)^{-\ve}.
\end{equation}

The Eisenstein series $E_\fa(z,s)$ is defined by
\begin{equation*}
  E_\fa(z,s) := \sum_{\gamma\in\Gamma_\fa\bs \Gamma_0(q)} \Im(\sigma_\fa^{-1}\gamma z)^s.
\end{equation*}
It has the following Fourier expansion
\[
  E_\fa(z,s) = \delta_\fa y^s+\varphi_\fa(s)y^{1-s}+\sum_{n\neq0}\varphi_\fa(n,s)W_s(nz),
\]
where $\delta_\fa=1$ if $\fa\sim\infty$, or $\delta_\fa=0$ otherwise.
Let
\begin{equation}\label{eqn: eta(n,s)}
  \eta(n,s) = \sum_{ad=|n|}\left(\frac{a}{d}\right)^{s-1/2}.
\end{equation}
The Eisenstein series $E_\fa(z,s)$ is even, and we have
\[
  T_n E_\fa(z,s) = \eta(n,s)E_\fa(z,s),
\]
if $(n,q)=1$.
Write $\eta_t(n)=\eta(n,1/2+it)$ and $E_{\fa,t}(z)=E_\fa(z,1/2+it)$. We define
\begin{equation}\label{eqn: omega(t)}
  \omega_\fa(t) := \frac{4\pi}{\cosh(\pi t)}|\varphi_\fa(1,1/2+it)|^2.
\end{equation}
And, by~\cite[p. 1188]{conrey2000cubic}, we have
\begin{equation}\label{eqn: omega^*(t)}
  \omega^*(t):=\sum_{\fa}\omega_\fa(t) \gg q^{-1-\ve}\min(|t|^{-\ve},|t|^2).
\end{equation}

Now we recall some background on Maass forms for $SL(3,\bbZ)$.
We will follow the notation in Goldfeld's book~\cite{goldfeld2006automorphic}.
Let $\phi$ be a Maass form of type $(\nu_1,\nu_2)$. We have the following
Fourier--Whittaker expansion
\begin{equation}\label{eqn: phi FE}
  \phi(z) = \sum_{\gamma\in U_2(\bbZ)\bs \Gamma_0(q)} \sum_{m_1=1}^{\infty}
            \sum_{m_2\neq0} \frac{A(m_1,m_2)}{m_1|m_2|}
            W_J\left(M\begin{pmatrix} \gamma & \\ & 1 \end{pmatrix}z,\nu_1,\nu_2,\psi_{1,1}\right),
\end{equation}
where $U_2(\bbZ)$ is the group of $2\times2$ upper triangular matrices
with integer entries and ones on the diagonal,
$W_J(z,\nu_1,\nu_2,\psi_{1,1})$ is the Jacquet--Whittaker function,
and $M=\diag(m_1|m_2|,m_1,1)$.
From now on, let $\phi$ be a self-dual Hecke--Maass form of type $(\nu,\nu)$ for $SL(3,\bbZ)$,
normalized to have the first Fourier coefficient $A(1,1)=1$.
For later purposes, we record the Hecke relation
\begin{equation}\label{eqn: HR}
  A(m,n) = \sum_{d|(m,n)}\mu(d)A\left(\frac{m}{d},1\right)A\left(1,\frac{n}{d}\right).
\end{equation}
Moreover, the Rankin--Selberg theory implies the bound
\begin{equation}\label{eqn: RS bound}
  \sum_{n\ll x}|A(1,n)|^2 \ll x,
\end{equation}
for all $x\geq1$.
We will also need the following estimate (see Blomer~\cite[Eq. (10) and (11)]{blomer2012subconvexity})
\begin{equation}\label{eqn: blomer}
  \sum_{n\leq x}|A(na,b)|^2 \ll x(ab)^{7/16+\ve},
  \quad \textrm{and}\quad
  \sum_{n\leq x}|A(na,b)| \ll x(ab)^{7/32+\ve}.
\end{equation}

\subsection{$L$-functions and the approximate functional equations}

The $L$-function attached to $\phi$ is $L(s,\phi)=\sum_{n=1}^{\infty}A(1,n)n^{-s}$,
and the completed $L$-function is given by
\[
  \Lambda(s,\phi) = \pi^{-3s/2}\prod_{j=1}^{3}\Gamma\left(\frac{s-\alpha_j}{2}\right) L(s,\phi),
\]
where $\alpha_1=3\nu-1$, $\alpha_2=0$, and $\alpha_3=1-3\nu$.
The $L$-function attached to the twist $\phi\times\chi$ is
\[
  L(s,\phi\times\chi)=\sum_{n=1}^{\infty}\frac{A(1,n)\chi(n)}{n^s},
\]
whose completed version is
\begin{equation*}\label{eqn: Lambda(s,phi.chi)}
  \Lambda(s,\phi\times\chi) = q^{3s/2}L_\infty(s,\phi\times\chi)L(s,\phi\times\chi),
\end{equation*}
where
\[
  L_\infty(s,\phi\times\chi) = \pi^{-3s/2}\prod_{j=1}^{3}\Gamma\left(\frac{s+\delta-\alpha_j}{2}\right),
\]
with $\delta=0$ or $1$ according to whether $\chi(-1)=1$ or $-1$.
Then $\Lambda(s,\phi\times\chi)$ is entire, and its functional equation is
\[
  \Lambda(s,\phi\times\chi)=\Lambda(1-s,\phi\times\chi).
\]
Note that the root number of $\Lambda(s,\phi\times\chi)$ is 1.

Next we consider the Rankin--Selberg convolution of $\phi$ with $u_j\times\chi$ given by
\[
  L(s,\phi\times u_j\times\chi) = \sum_{m,n=1}^{\infty}\frac{A(m,n)\lambda_j(n)\chi(n)}{(m^2n)^s}.
\]
The completed version of $L(s,\phi\times u_j\times\chi)$ is
\[
  \Lambda(s,\phi\times u_j\times\chi) = q^{3s} L_\infty(s,\phi\times u_j\times\chi)L(s,\phi\times u_j\times\chi),
\]
where
\[
  L_\infty(s,\phi\times u_j\times\chi) = \pi^{-3s} \prod_{\pm}\prod_{j=1}^{3}
  \Gamma\left(\frac{s+\delta\pm it_j-\alpha_j}{2}\right).
\]
The function $\Lambda(s,\phi\times u_j\times\chi)$ is entire, and its functional equation is
\[
  \Lambda(s,\phi\times u_j\times\chi)=\Lambda(1-s,\phi\times u_j\times\chi).
\]
Note that again the root number is 1.
Finally, we consider the convolution with the Eisenstein series which is defined as
\[
  L(s,\phi\times E_{\fa,t}\times\chi) = \sum_{m,n=1}^{\infty}\frac{A(m,n)\eta_t(n)\chi(n)}{(m^2n)^s}.
\]
By the definition of $\eta_t(n)$ \eqref{eqn: eta(n,s)}, by comparing the Euler products, we know that
\[
  L(s,\phi\times E_{\fa,t}\times\chi) = L(s+it,\phi\times\chi)L(s-it,\phi\times\chi).
\]

Now we consider the approximate functional equations for
$L(s,\phi\times u_j\times\chi)$ and $L(s,\phi\times E_{\fa,t}\times\chi)$.
We use the results from Blomer~\cite[\S2]{blomer2012subconvexity}.
Let
\begin{equation}\label{eqn: G(u)}
  G(u) = e^{u^2}.
\end{equation}
We have the following approximate functional equations, (see~\cite[Theorem 5.3]{iwaniec2004analytic}).
\begin{lemma}\label{lemma: AFE}
  We have
  \begin{equation}\label{eqn: AFE cusp}
    L(1/2,\phi\times u_j\times\chi) = 2\sum_{m,n=1}^{\infty}\frac{A(m,n)\lambda_j(n)\chi(n)}{(m^2n)^{1/2}}V_{t_j}\left(\frac{m^2n}{q^3}\right),
  \end{equation}
  where
  \[
    V_t(y) = \frac{1}{2\pi i}\int_{(3)}(\pi^3y)^{-u}\prod_\pm
    \prod_{j=1}^{3} \frac{\Gamma\left(\frac{1/2+u+\delta\pm it-\alpha_j}{2}\right)}
    {\Gamma\left(\frac{1/2+\delta\pm it-\alpha_j}{2}\right)} G(u)\frac{du}{u}.
  \]
  And similarly, we have
  \begin{equation}\label{eqn: AFE eis}
    L(1/2,\phi\times E_{\fa,t}\times\chi) = 2\sum_{m,n=1}^{\infty}\frac{A(m,n)\eta_t(n)\chi(n)}{(m^2n)^{1/2}}V_t\left(\frac{m^2n}{q^3}\right).
  \end{equation}
\end{lemma}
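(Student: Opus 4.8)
The plan is to obtain both identities from the standard device for producing approximate functional equations (cf.\ \cite[Theorem 5.3]{iwaniec2004analytic}), specialized to the completed $L$-functions recorded above. The only inputs I would use are that $\Lambda(s,\phi\times u_j\times\chi)=q^{3s}L_\infty(s,\phi\times u_j\times\chi)L(s,\phi\times u_j\times\chi)$ is entire, that it satisfies $\Lambda(s,\phi\times u_j\times\chi)=\Lambda(1-s,\phi\times u_j\times\chi)$ with root number $1$, and that the defining Dirichlet series converges absolutely for $\Re(s)>1$.

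First I would form $I=\frac{1}{2\pi i}\int_{(3)}\Lambda(1/2+u,\phi\times u_j\times\chi)\,G(u)\,\frac{du}{u}$, where $G(u)=e^{u^2}$ from \eqref{eqn: G(u)} is entire, even, satisfies $G(0)=1$, and decays rapidly in vertical strips. Since $\Lambda(1/2+u,\phi\times u_j\times\chi)$ is entire, shifting the contour to $\Re(u)=-3$ crosses only the simple pole of $G(u)/u$ at $u=0$, of residue $\Lambda(1/2,\phi\times u_j\times\chi)$. In the shifted integral I would replace $u$ by $-u$ and apply the functional equation together with $G(-u)=G(u)$; since the root number equals $1$, this reproduces $-I$, so $\Lambda(1/2,\phi\times u_j\times\chi)=2I$. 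Dividing by $q^{3/2}L_\infty(1/2,\phi\times u_j\times\chi)$, which is nonvanishing, yields
\[
  L(1/2,\phi\times u_j\times\chi)=\frac{2}{2\pi i}\int_{(3)}q^{3u}\,\frac{L_\infty(1/2+u,\phi\times u_j\times\chi)}{L_\infty(1/2,\phi\times u_j\times\chi)}\,L(1/2+u,\phi\times u_j\times\chi)\,G(u)\,\frac{du}{u}.
\]

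On the line $\Re(u)=3$ we have $\Re(1/2+u)>1$, so I would insert the absolutely convergent series $L(1/2+u,\phi\times u_j\times\chi)=\sum_{m,n\geq1}A(m,n)\lambda_j(n)\chi(n)(m^2n)^{-1/2-u}$ and interchange summation and integration. Grouping $q^{3u}(m^2n)^{-u}$ with the ratio of archimedean factors $L_\infty(1/2+u,\cdot)/L_\infty(1/2,\cdot)=\pi^{-3u}\prod_{\pm}\prod_{j=1}^{3}\frac{\Gamma((1/2+u+\delta\pm it_j-\alpha_j)/2)}{\Gamma((1/2+\delta\pm it_j-\alpha_j)/2)}$ identifies the inner integral as $V_{t_j}(m^2n/q^3)$, which is \eqref{eqn: AFE cusp}. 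For \eqref{eqn: AFE eis} I would repeat the argument verbatim with $u_j$ replaced by $E_{\fa,t}$, using that $L(s,\phi\times E_{\fa,t}\times\chi)=L(s+it,\phi\times\chi)L(s-it,\phi\times\chi)$ is entire, has completed form $q^{3s}L_\infty(s,\phi\times E_{\fa,t}\times\chi)L(s,\phi\times E_{\fa,t}\times\chi)$ with $L_\infty(s,\phi\times E_{\fa,t}\times\chi)=\pi^{-3s}\prod_{\pm}\prod_{j=1}^{3}\Gamma((s+\delta\pm it-\alpha_j)/2)$, and has root number $1$.

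This step is essentially bookkeeping, so there is no genuine obstacle; the two points deserving a word of care are that one should manipulate the completed $\Lambda$, so that shifting past $u=0$ encounters no archimedean poles (these being cancelled by the trivial zeros of $L$), and that the clean factor $2$---equivalently, the agreement of the ``dual'' sum with the original one---uses precisely that $\phi$ is self-dual and $\chi$ quadratic, i.e.\ that the root number is $1$.
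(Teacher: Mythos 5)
Your derivation is correct and is precisely the standard mechanism behind Iwaniec--Kowalski, Theorem 5.3, which is what the paper cites for this lemma (the paper gives no independent proof). The contour shift, residue at $u=0$, the substitution $u\mapsto -u$ combined with $\Lambda(1/2+u)=\Lambda(1/2-u)$, root number $1$, and $G(-u)=G(u)$, followed by dividing out $q^{3/2}L_\infty(1/2)$ and inserting the Dirichlet series on $\Re(u)=3$, are exactly the ingredients of the cited theorem applied to the two completed $L$-functions recorded in the paper.
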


We see that $V_t(y)$ has the following properties which effectively limit the terms
in \eqref{eqn: AFE cusp} and \eqref{eqn: AFE eis} with $m^2n\ll (q(1+|t_j|))^{3+\ve}$
and $(q(1+|t|))^{3+\ve}$ respectively. Note that we can separate the variables $t$ and $y$ in $V_t(y)$
by the second part of the following lemma. Moreover, we see the $u$-integral can
be easily handled now. In our later application, we will take $U=\log^2(qT)$.

\begin{lemma}\label{lemma: V_t}
  \begin{itemize}
    \item [(i)]  We have
        \[
            y^k V_t^{(k)}(y) \ll \left(1+\frac{y}{(1+|t|)^3}\right)^{-A},
        \]
        and
        \[
            y^k V_t^{(k)}(y) = \delta_k + O\left(\left(\frac{y}{(1+|t|)^3}\right)^{\alpha}\right),
        \]
        where $\delta_0=1$, $\delta_k=0$ if $k>0$,
        and $0<\alpha\leq (1/2-|\Re(3\nu-1)|)/3$ (for example, we can take $\alpha=3/32$).
    \item [(ii)] For any $1<U\ll T^\ve$, $\ve>0$, and $|t-T|\ll T^{1-2\ve}$, we have the following approximation
        \begin{equation*}
          \begin{split}
             V_t(y) & =  \sum_{k=0}^{K/2}\sum_{l=0}^{K} T^{-2k} \left(\frac{t^2-T^2}{T^2}\right)^l
                            V_{k,l}\left(\frac{y}{T^3}\right)  + O\left(y^{-\ve}T^\ve e^{-U}\right) \\
                    & \hskip 150pt  + O\left(T^\varepsilon\left(\frac{1+|t-T|}{T}\right)^{K+1}\left(1+\frac{y}{T^3}\right)^{-A}\right),
          \end{split}
        \end{equation*}
        where
        \begin{equation}\label{eqn: V_kl}
          V_{k,l}(y) = \frac{1}{2\pi i} \int_{\ve-iU}^{\ve+iU} P_{k,l}(u)(2\pi)^{-3u} y^{-u} G(u)\frac{du}{u},
        \end{equation}
        for some polynomial $P_{k,l}$. Here $K$ is an even positive integer.
  \end{itemize}
\end{lemma}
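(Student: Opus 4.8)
The plan is to read both parts off the contour integral in Lemma~\ref{lemma: AFE} directly, using only Stirling's formula and contour shifts. Write the integrand of $V_t(y)$ as $(\pi^3 y)^{-u}H_t(u)G(u)u^{-1}$, where $H_t(u)=\prod_{\pm}\prod_{j=1}^{3}\Gamma\!\big(\tfrac{1/2+u+\delta\pm it-\alpha_j}{2}\big)/\Gamma\!\big(\tfrac{1/2+\delta\pm it-\alpha_j}{2}\big)$ is the product of the six $\Gamma$-quotients and $G(u)=e^{u^2}$ decays like $e^{-(\Im u)^2}$ on vertical lines, so every shift below is legitimate and the $u$-integral has effective length $O(1)$ (or $\leq U$ after truncation). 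Two facts do the work: $H_t(0)=1$, so on $\Re u\geq0$ the only singularity of the integrand is the simple pole at $u=0$ with residue $1$; and since $\phi$ is self-dual on $GL(3)$, the Gelbart--Jacquet symmetric-square description together with the Kim--Sarnak bound gives $|\Re(3\nu-1)|\leq 7/32$, so all poles of the numerator $\Gamma$'s lie in $\Re u<0$, while Stirling gives $|H_t(u)|\asymp(1+|t|)^{3\Re u}$ (times a factor decaying in $|\Im u|$) uniformly for $\Re u$ in a fixed compact set.

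For part (i) I would differentiate under the integral sign: $y^kV_t^{(k)}(y)=\frac{1}{2\pi i}\int_{(3)}(\pi^3 y)^{-u}p_k(u)H_t(u)G(u)\frac{du}{u}$ with $p_k(u)=(-u)(-u-1)\cdots(-u-k+1)$, so that $p_k(u)/u$ is a polynomial for $k\geq1$ while $p_0(u)/u=1/u$. Shifting to $\Re u=A$ crosses no pole and, by the Stirling bound, gives $\ll y^{-A}(1+|t|)^{3A}$; together with the trivial estimate on $\Re u=0$ this is the first bound $\ll(1+y/(1+|t|)^3)^{-A}$. Shifting instead to $\Re u=-\alpha$ with $\alpha$ in the stated range (which stays to the right of every $\Gamma$-pole) crosses the pole at $u=0$ exactly when $k=0$, with residue $1$, and the shifted integral is $\ll y^{\alpha}(1+|t|)^{-3\alpha}$; this yields $V_t(y)=1+O((y/(1+|t|)^3)^{\alpha})$ and $y^kV_t^{(k)}(y)=O((y/(1+|t|)^3)^{\alpha})$ for $k\geq1$, i.e.\ the second bound with $\delta_k$ as stated.

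Part (ii) is a higher-order form of the separation of variables in \cite{blomer2012subconvexity}, carried out with the kind of multi-term asymptotics used in \cite{mckee2015improved}. Put $z_j^\pm=(1/2+\delta-\alpha_j\pm it)/2\sim\pm it/2$, so $H_t(u)=\prod_{\pm}\prod_j\Gamma(z_j^\pm+u/2)/\Gamma(z_j^\pm)$; for $|t|$ large I would apply Stirling in the form $\log\Gamma(z+w)-\log\Gamma(z)=w\log z+\sum_{n\geq1}c_n(w)z^{-n}$, where $c_n$ is a polynomial in $w$ with $c_n(0)=0$, to each factor with $z=z_j^\pm$, $w=u/2$. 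Pairing the $+it$ and $-it$ factors and using $\frac{a+it}{2}\cdot\frac{a-it}{2}=\frac{a^2+t^2}{4}$, all odd powers of $1/t$ (and the branch terms $\pm i\pi/2$) cancel, giving $H_t(u)=(t/2)^{3u}\exp\!\big(\sum_{k\geq1}R_k(u)t^{-2k}\big)=(t/2)^{3u}\sum_{k\geq0}Q_k(u)t^{-2k}$ with $Q_0\equiv1$ and each $Q_k$ ($k\geq1$) a polynomial divisible by $u$. Next I would write $(t/2)^{3u}=(T/2)^{3u}(1+X)^{3u/2}$ with $X=(t^2-T^2)/T^2$ (so $|X|\ll|t-T|/T\ll T^{-2\ve}$) and expand $(1+X)^{3u/2}=\sum_{l=0}^{K}\binom{3u/2}{l}X^{l}+O_K(|X|^{K+1}\cdot\mathrm{poly}(u))$ by the binomial series. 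Since $(\pi^3 y)^{-u}(T/2)^{3u}=(2\pi)^{-3u}(y/T^3)^{-u}$, inserting both expansions (truncating the first at $k\leq K/2$), collecting terms, and setting $P_{k,l}(u)=Q_k(u)\binom{3u/2}{l}$ produces exactly the main sum $\sum_{k}\sum_{l}t^{-2k}\big(\tfrac{t^2-T^2}{T^2}\big)^{l}V_{k,l}(y/T^3)$ with $V_{k,l}$ as in \eqref{eqn: V_kl} --- here the contour is shifted from $\Re u=3$ to $\Re u=\ve$, which crosses no pole because $P_{k,l}(0)=0$ unless $k=l=0$ (and in that case $u=0$ lies left of both contours). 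The residual errors: truncating the $u$-integral to $|\Im u|\leq U$ costs $O(y^{-\ve}(1+|T|)^{\ve}e^{-U})$ by the decay of $G$; the Stirling tail contributes $O_K(|t|^{-K-2}(1+y/T^3)^{-A})$ and the binomial tail $O_K(|X|^{K+1}(1+y/T^3)^{-A})$, both of which, since $|t|\asymp T$ and $|X|\ll|t-T|/T$, are absorbed into $O\big((\tfrac{1+|t-T|}{T})^{K+1}(1+y/T^3)^{-A}\big)$, the decay $(1+y/T^3)^{-A}$ coming from shifting each remainder's contour to the right.

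The main obstacle is the bookkeeping in part (ii): checking that the $+it$/$-it$ pairing really kills every odd power of $1/t$ (so only $t^{-2k}$ appears), that all surviving coefficients are polynomials in $u$ vanishing at $u=0$ off the leading term (so the shift to $\Re u=\ve$ is legitimate and each $V_{k,l}$ has the stated shape), and --- most delicately --- that the three error sources (the $\Gamma$-expansion tail, the binomial tail, and the truncation of the $u$-integral) can all be bounded \emph{uniformly} for $|t-T|\ll T^{1-2\ve}$ and $1<U\ll T^{\ve}$ by the two error terms displayed in the statement; this forces one to play shifts to the right (producing the factor $(1+y/T^3)^{-A}$) against the polynomial growth of $P_{k,l}$ and $\binom{3u/2}{K+1}$, ultimately controlled by the Gaussian $G(u)$. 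Part (i) is routine by comparison.
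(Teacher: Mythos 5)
Your proposal is correct and follows essentially the same route as the paper: part (i) is the standard contour-shift argument that the paper simply cites from Iwaniec--Kowalski [Proposition 5.4], and part (ii) reproduces the paper's argument (Blomer's truncation of the $u$-integral, Young's Stirling expansion of the six $\Gamma$-quotients with the $\pm it$ pairing killing odd powers of $1/t$, then the expansion of $t^{3u}$ about $T^{3u}$ in powers of $(t^2-T^2)/T^2$). The only cosmetic difference is that you expand $(1+X)^{3u/2}$ by the binomial series where the paper Taylor-expands $e^{\frac{3u}{2}\log(1+X)}$; these give the same polynomials $P_{k,l}$.
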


\begin{proof}
  \begin{itemize}
    \item [(i)]  See Iwaniec--Kowalski~\cite[Proposition 5.4]{iwaniec2004analytic}.
    \item [(ii)] By Stirling's formula and contour shifts as in Iwaniec--Kowalski~\cite[p. 100]{iwaniec2004analytic}, we have (see Blomer~\cite[Lemma 1]{blomer2012subconvexity})
        \begin{equation*}
          V_t(y) = \frac{1}{2\pi i}\int_{\ve-iU}^{\ve+iU} \pi^{-3u}\prod_\pm
                    \prod_{j=1}^{3} \frac{\Gamma\left(\frac{1/2+u+\delta\pm it-\alpha_j}{2}\right)}
                    {\Gamma\left(\frac{1/2+\delta\pm it-\alpha_j}{2}\right)} y^{-u} G(u)\frac{du}{u}
                    + O\left(y^{-\ve}(1+|t|)^\ve e^{-U}\right).
        \end{equation*}
        The rest of the proof is following very closely to Young~\cite[\S5]{young2014weyl}.
        At first, by Stirling's formula, if $|\Im(z)|\rightarrow\infty$ (with fixed real part),
        but $|u|\ll |z|^{1/2}$, then
        \begin{equation*}\label{eqn: Gamma}
          \frac{\Gamma(z+u)}{\Gamma(z)} = z^u \left(1+\sum_{k=1}^{K}\frac{P_k(u)}{z^k}+O\left(\frac{(1+|u|)^{2K+2}}{|z|^{K+1}}\right) \right),
        \end{equation*}
        for certain polynomials $P_k(u)$ of degree $2k$.
        So for $|\Im(u)|\ll U$, and $t\asymp T$, 
        we have that
        \begin{equation*}
          \prod_\pm \prod_{j=1}^{3} \frac{\Gamma\left(\frac{1/2+u+\delta\pm it-\alpha_j}{2}\right)}
                    {\Gamma\left(\frac{1/2+\delta\pm it-\alpha_j}{2}\right)}
          = \left(\frac{t}{2}\right)^{3u} \left(1+\sum_{k=1}^{K/2}\frac{P_{2k}(u)}{t^{2k}}+O\left(\frac{(1+|u|)^{2K+2}}{t^{K+1}}\right) \right),
        \end{equation*}
        for a different collection of $P_k(u)$.
        Note that, in fact, the factor $\left(t/2\right)^{3u}$ is $\left((t/2)^2\right)^{3u/2}$,
        which is even as a function of $t$.
        For convenience, set $P_0(u)=1$.
        Hence
        \begin{equation}\label{eqn: V_t(y)=sum_k}
          \begin{split}
             V_t(y) & = \sum_{k=0}^{K/2}t^{-2k} \frac{1}{2\pi i}\int_{\ve-iU}^{\ve+iU}
             P_{2k}(u)\left(\frac{t}{2\pi}\right)^{3u} y^{-u}G(u)\frac{du}{u} \\
               & \hskip 80pt  + O\left(t^{-K-1+\varepsilon}\left(1+\frac{y}{t^3}\right)^{-A}\right)
               + O\left(y^{-\ve} t^\ve e^{-U}\right),
          \end{split}
        \end{equation}
        where the extra factor $\left(1+\frac{y}{t^3}\right)^{-A}$ arises
        from moving the contour to $\Re(u)=A$ if $y\geq t^3$, and to $\Re(u)=-1/4$
        if $y\leq t^3$ (here we use the fact $|\Re(\alpha_j)|\leq 7/32$).
        We further refine \eqref{eqn: V_t(y)=sum_k} by approximating $t$ by $T$.
        Since in our application $h(t)$ is very small unless $|t-T|\ll M\log^2 T$,
        where $M\ll T^{1/2}$ and $T$ large, our assumption $|t-T|\ll T^{1-2\ve}$ is
        flexible enough. Note that
        $\left|u\frac{t^2-T^2}{T^2}\right|\ll \left|u\frac{t-T}{T}\right| \ll T^{-\ve}$.
        By Taylor expansion, we have
        \begin{equation}\label{eqn: t^u}
          \begin{split}
            t^{3u} = T^{3u} e^{\frac{3u}{2}\log(1+\frac{t^2-T^2}{T^2})} & = T^{3u}\sum_{l=0}^{K} Q_l(u)\left(\frac{t^2-T^2}{T^2}\right)^l \\
                & \hskip 50pt  + O\left(T^\varepsilon(1+|u|)^{K+1}\left(\frac{|t-T|}{T}\right)^{K+1}\right),
          \end{split}
        \end{equation}
        for certain polynomial $Q_l(u)$ of degree $\leq l$. Similar results hold for $t^{3u-2k}$.
        So, by \eqref{eqn: V_t(y)=sum_k} and \eqref{eqn: t^u}, we prove this lemma. \qedhere
  \end{itemize}
\end{proof}

\subsection{The Kuznetsov formula for $\Gamma_0(q)$}

The two central tools we need in this paper are
the Kuznetsov formula for $\Gamma_0(q)$
and the Voronoi formula for $SL(3,\bbZ)$.
In this subsection, we recall the Kuznetsov formula,
and then in the next subsection, we will review
the Voronoi formula.
As usual, let
\[
  S(a,b;c) = {\sum_{d(c)}}^* e\left(\frac{ad+b\bar{d}}{c}\right)
\]
be the classical Kloosterman sum. For any $m,n\geq1$, and any test function
$h(t)$ which is even and satisfies the following conditions:
\begin{itemize}
  \item [(i)] $h(t)$ is holomorphic in $|\Im(t)|\leq 1/2+\ve$,
  \item [(ii)] $h(t)\ll (1+|t|)^{-2-\ve}$ in the above strip,
\end{itemize}
we have the following Kuznetsov formula (see~\cite[Eq. (3.17)]{conrey2000cubic} for example).
\begin{lemma}\label{lemma: KTF}
  For $m,n\geq1$, $(mn,q)=1$, we have
  \begin{equation*}
    \begin{split}
        & {\sum_j}'h(t_j)\omega_j^*\lambda_j(m)\lambda_j(n) + \frac{1}{4\pi}\int_{-\infty}^{\infty}h(t)\omega^*(t)\eta_t(m)\eta_t(n)dt \\
        & \hskip 120pt = \frac{1}{2}\delta_{m,n}H + \frac{1}{2}\sum_{q|c}\frac{1}{c}\sum_{\pm} S(n,\pm m;c)H^{\pm}\left(\frac{4\pi\sqrt{mn}}{c}\right),
    \end{split}
  \end{equation*}
  where $\sum'$ restricts to the even Hecke--Maass cusp forms, $\delta_{m,n}$ is the Kronecker symbol,
  \begin{equation}\label{eqn: H}
    \begin{split}
       H & = \frac{2}{\pi}\int_{0}^{\infty} h(t) \tanh(\pi t)t dt, \\
       H^+(x) & = 2i \int_{-\infty}^{\infty} J_{2it}(x)\frac{h(t)t}{\cosh(\pi t)}dt, \\
       H^-(x) & = \frac{4}{\pi} \int_{-\infty}^{\infty} K_{2it}(x)\sinh(\pi t)h(t)t dt,
    \end{split}
  \end{equation}
  and $J_\nu(x)$ and $K_\nu(x)$ are the standard $J$-Bessel function and $K$-Bessel function respectively.
\end{lemma}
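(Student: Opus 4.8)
The plan is to recognise Lemma \ref{lemma: KTF} as the Kuznetsov trace formula for $\Gamma_0(q)$ in its standard form and to derive it by the Poincar\'e series (pre-trace formula) method of Iwaniec and of Deshouillers--Iwaniec, specialised to level $q$; this is exactly the computation leading to (3.17) of \cite{conrey2000cubic}. First I would fix a smooth, rapidly decaying test function $\psi$ on $(0,\infty)$ and form the Poincar\'e series
\[
  P_m(z\mid\psi) = \sum_{\gamma\in\Gamma_\infty\bs\Gamma_0(q)} \psi\bigl(4\pi m\,\Im(\gamma z)\bigr)\, e\bigl(m\,\Re(\gamma z)\bigr),
\]
which lies in $\cL(\Gamma_0(q)\bs\bbH)$. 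Computing its Fourier expansion at $\infty$ gives the geometric side: because $q$ is prime the scaling matrix at $\infty$ is the identity, so the Bruhat decomposition of $\Gamma_0(q)$ modulo $\Gamma_\infty$ has lower-left entries $c$ running over multiples of $q$, and unfolding the $n$-th coefficient produces a diagonal term proportional to $\delta_{m,n}$ plus $\sum_{q\mid c} c^{-1} S(m,n;c)$ times a Bessel-type integral transform of $\psi$. Keeping track of $\gamma$ versus $\gamma$ followed by $z\mapsto-\bar z$ is what yields both signs $\pm$ and both kernels (the $J$-Bessel transform attached to $S(n,m;c)$ and the $K$-Bessel transform attached to $S(n,-m;c)$).

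Next I would expand $P_m(\cdot\mid\psi)$ against the orthonormal basis $\{1\}\cup\cB(q)\cup\{E_\fa(\cdot,1/2+it)\}$. The constant function drops out since $\int_{\Gamma_0(q)\bs\bbH}P_m(z\mid\psi)\,d\mu z=0$; the cuspidal part contributes $\sum_j\hat\psi(t_j)\,\overline{\rho_j(m)}\rho_j(n)$ and the continuous part the Eisenstein analogue with $\varphi_\fa(n,1/2+it)$, where $\hat\psi$ is the Selberg/Harish-Chandra transform of $\psi$. Using $\rho_j(\pm n)=\rho_j(\pm1)\lambda_j(n)n^{-1/2}$ for $n>0$, the definitions \eqref{eqn: omega_j}, \eqref{eqn: omega(t)} of $\omega_j$, $\omega_\fa(t)$, and the standard reorganisation of the oldclass basis $\cV(u_j)$ of each newform (which, for $(mn,q)=1$, turns the coefficient sums into $\lambda_j(m)\lambda_j(n)$ weighted by $\omega_j^*$, resp.\ $\eta_t(m)\eta_t(n)$ weighted by $\omega^*(t)$), the spectral side becomes $\tfrac12\sum_j' h(t_j)\omega_j^*\lambda_j(m)\lambda_j(n)+\tfrac1{4\pi}\int h(t)\omega^*(t)\eta_t(m)\eta_t(n)\,dt$ after renaming the transform of $\psi$ as $h$. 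Equating with the geometric side gives the formula for all $h$ that arise as Selberg transforms of admissible $\psi$.

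To reach an arbitrary even $h$ satisfying (i)--(ii) I would invert the transform: the Selberg transform factors through the Kontorovich--Lebedev / Sears--Titchmarsh pair, and holomorphy in $|\Im(t)|\le1/2+\ve$ together with the decay $h(t)\ll(1+|t|)^{-2-\ve}$ is precisely what legitimises the required contour shifts and absolute convergence, producing the explicit $H$, $H^+$, $H^-$ of \eqref{eqn: H}. Finally, to restrict the spectral sum to even forms, I would add the identity for the pair $(m,n)$ to the one for $(m,-n)$ and use $\rho_j(-n)=\epsilon_j\rho_j(n)$ (equivalently the reflection operator $R$): the odd forms cancel and one is left with the even spectrum together with the single sum $\sum_\pm S(n,\pm m;c)\,H^\pm\bigl(4\pi\sqrt{mn}/c\bigr)$, which is the assertion of the lemma.

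The analytic content is classical; the main obstacle is bookkeeping. One has to handle the cusps of $\Gamma_0(q)$ correctly --- harmless here since $q$ is prime, so only the $\infty$--$\infty$ block with $q\mid c$ survives --- and, more delicately, pass from an arbitrary orthonormal basis to the ``newform plus oldclass'' basis so as to extract the clean weights $\omega_j^*$ and $\omega^*(t)$; one also has to verify that the Bessel-transform inversion is valid under exactly the stated hypotheses on $h$, which is where conditions (i) and (ii) are used. All of this is standard: I would simply follow Iwaniec's \emph{Spectral Methods of Automorphic Forms}, Deshouillers--Iwaniec, and the presentation in \cite{conrey2000cubic} cited with the statement.
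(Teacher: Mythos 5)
Your proposal is correct and is essentially the paper's own treatment: the paper offers no proof of Lemma \ref{lemma: KTF} beyond citing Eq.\ (3.17) of Conrey--Iwaniec, and your sketch is precisely the standard Poincar\'e-series derivation (geometric unfolding with $q\mid c$, spectral expansion over the newform/oldclass basis giving the weights $\omega_j^*$ and $\omega^*(t)$, symmetrisation in $\pm n$ to isolate the even spectrum) that underlies that reference.
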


\subsection{The Voronoi formula for $SL(3,\bbZ)$}

Let $\psi$ be a smooth compactly supported function on $(0,\infty)$,
and let $\tilde{\psi}(s):=\int_{0}^{\infty}\psi(x)x^s\frac{dx}{x}$
be its Mellin transform.
For $\sigma>7/32$, we define
\begin{equation}\label{eqn: Psi}
  \begin{split}
    \Psi^{\pm}(x) & := x\frac{1}{2\pi i} \int_{(\sigma)} (\pi^3x)^{-s} \prod_{j=1}^{3}
                                \frac{\Gamma\left(\frac{s+\alpha_j}{2}\right)}{\Gamma\left(\frac{1-s-\alpha_j}{2}\right)}\tilde{\psi}(1-s)ds \\
                  & \hskip 100pt \pm \frac{x}{i}\frac{1}{2\pi i} \int_{(\sigma)} (\pi^3x)^{-s} \prod_{j=1}^{3}
                                 \frac{\Gamma\left(\frac{1+s+\alpha_j}{2}\right)}{\Gamma\left(\frac{2-s-\alpha_j}{2}\right)}\tilde{\psi}(1-s)ds.
  \end{split}
\end{equation}
Here $\alpha_j$ has the same meaning as above, that is, $\alpha_1=3\nu-1,\alpha_2=0$, and $\alpha_3=1-3\nu$.
Note that changing $\psi(y)$ to $\psi(y/N)$ for a positive real number $N$ has the effect of
changing $\Psi^\pm(x)$ to $\Psi^\pm(xN)$.
The Voronoi formula on $GL(3)$ was first proved by Miller and Schmid~\cite{miller2006automorphic}.
The present version is due to Goldfeld and Li~\cite{goldfeld2006voronoi} with slightly renormalized variables
(see Blomer~\cite[Lemma 3]{blomer2012subconvexity}).
\begin{lemma}\label{lemma: VSF}
  Let $c,d,\bar{d}\in\bbZ$ with $c\neq0$, $(c,d)=1$, and $d\bar{d}\equiv1\pmod{c}$.
  Then we have
  \begin{equation*}
    \begin{split}
         \sum_{n=1}^{\infty} A(m,n)e\left(\frac{n\bar{d}}{c}\right)\psi(n)
         = \frac{c\pi^{3/2}}{2} \sum_{\pm} \sum_{n_1|cm} \sum_{n_2=1}^{\infty}
              \frac{A(n_2,n_1)}{n_1n_2} S\left(md,\pm n_2;\frac{mc}{n_1}\right)
              \Psi^{\pm}\left(\frac{n_1^2n_2}{c^3m}\right).
    \end{split}
  \end{equation*}
\end{lemma}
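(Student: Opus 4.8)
The plan is to deduce Lemma~\ref{lemma: VSF} from the $GL(3)$ Voronoi formula of Miller--Schmid \cite{miller2006automorphic} and Goldfeld--Li \cite{goldfeld2006voronoi} in the normalization of Blomer \cite[Lemma 3]{blomer2012subconvexity}, but let me indicate the structure of a direct proof. By Mellin inversion, $\psi(n)=\frac{1}{2\pi i}\int_{(\sigma)}\tilde\psi(s)n^{-s}ds$, so the left-hand side equals $\frac{1}{2\pi i}\int_{(\sigma)}\tilde\psi(s)D(s,m,\bar d/c)\,ds$, where $D(s,m,a/c):=\sum_{n\geq1}A(m,n)e(na/c)n^{-s}$ is the additively twisted Dirichlet series attached to $\phi$, absolutely convergent for $\Re(s)$ large by the Rankin--Selberg bound \eqref{eqn: RS bound}. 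The proof then reduces to establishing the analytic continuation and functional equation of $D(s,m,a/c)$: once this is in hand one moves the contour to $\Re(s)=1-\sigma$, inserts the functional equation, and recognizes the resulting archimedean integral transforms, which reassemble into $\Psi^+$ and $\Psi^-$ of \eqref{eqn: Psi} according to the sign $\pm n_2$ attached to the dual variable.

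First I would dispose of the dependence on $m$ using the Hecke relation \eqref{eqn: HR}: writing $A(m,n)=\sum_{e\mid(m,n)}\mu(e)A(m/e,1)A(1,n/e)$ and splitting the $n$-sum according to $e$, one reduces --- up to bookkeeping that ultimately produces the divisor condition $n_1\mid cm$ and the composite moduli $mc/n_1$ --- to the case $m=1$, i.e. to the series $D(s,1,a/c)=\sum_n A(1,n)e(na/c)n^{-s}$.

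The heart of the matter is the functional equation for $D(s,1,a/c)$. For the terms with $(n,c)=1$ one opens the additive character into multiplicative ones by Gauss sums, $e(na/c)=\frac{1}{\varphi(c)}\sum_{\chi\bmod c}\overline{\chi(na)}\,\tau(\chi)$, reducing to the Dirichlet-twisted $L$-functions $L(s,\phi\times\chi)$, each satisfying the standard $GL(3)\times GL(1)$ functional equation with conductor $c^3$ and archimedean factor $\prod_j\Gamma(\frac{s+\alpha_j}{2})$ in the even case (and the $\delta=1$ shift in the odd case, which is exactly the source of the two gamma ratios inside each $\Psi^\pm$). Reassembling the characters into an additive character on the dual side produces the Kloosterman sum $S(md,\pm n_2;mc/n_1)$ and the explicit transform. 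Terms with $(n,c)=d_0>1$ are treated by the same device applied to the modulus $c/d_0$, and summing over $d_0$ generates the full divisor structure of the statement. Alternatively --- and this is the route taken in the literature --- one simply invokes the automorphic-distribution computation of Miller--Schmid or the Whittaker-function computation of Goldfeld--Li, which yields precisely the normalization, including the constant $c\pi^{3/2}/2$ and the weights $1/(n_1n_2)$, recorded in Lemma~\ref{lemma: VSF}.

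I expect the main obstacle to be the non-coprime terms $(n,c)>1$: tracking how the stratification by $\gcd(n,c)$ interacts with the Gauss-sum manipulation is what forces the appearance of the divisors $n_1\mid cm$ and the composite moduli $mc/n_1$, and pinning down the arithmetic normalizations exactly is delicate. The analytic steps --- absolute convergence for $\Re(s)$ large, the contour shift past $s=0,1$ where $\tilde\psi$ and the gamma factors are holomorphic since $\psi$ is smooth of compact support, and the estimates needed to justify interchanging sum and integral --- are routine given \eqref{eqn: RS bound} and Stirling's formula.
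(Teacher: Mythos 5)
The paper does not prove Lemma~\ref{lemma: VSF} at all: it is stated as a citation to Miller--Schmid and to Goldfeld--Li (in Blomer's normalization, \cite[Lemma 3]{blomer2012subconvexity}), which you yourself flag as "the route taken in the literature." So insofar as you invoke those sources, you and the paper agree.

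Your attempted direct proof, however, contains a genuine gap at the central step. Opening $e(na/c)$ by Gauss sums reduces the coprime part of $D(s,1,a/c)$ to a $\varphi(c)^{-1}\sum_{\chi\bmod c}\tau(\chi)\bar\chi(a)L(s,\phi\times\bar\chi)$ decomposition, but the asserted "standard $GL(3)\times GL(1)$ functional equation with conductor $c^3$" is only available for $\chi$ \emph{primitive} modulo $c$. The imprimitive characters, induced from primitive $\chi^*$ of conductor $c^*\mid c$, contribute $L(s,\phi\times\chi^*)$ multiplied by a finite Euler product over $p\mid(c/c^*)$; relating that to the dual side is not a functional-equation manipulation but requires knowing how the local factors of $\phi$ interact with ramification, and it is precisely this obstruction that historically blocked the Gauss-sum approach to $GL(3)$ Voronoi for general (non-prime) modulus $c$ and motivated the automorphic-distribution method of Miller--Schmid and the Whittaker-function computation of Goldfeld--Li. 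Your phrase "pinning down the arithmetic normalizations exactly is delicate" understates the issue: as written, the argument proves the formula only when every $\chi\bmod c$ is primitive (e.g.\ $c$ prime), not in the generality of the Lemma. Relatedly, the claim that "reassembling the characters... produces the Kloosterman sum $S(md,\pm n_2;mc/n_1)$" is asserted rather than derived, and the interplay between $m$, the Hecke relation \eqref{eqn: HR}, and the divisor structure $n_1\mid cm$ with modulus $mc/n_1$ is nontrivial and not a mere bookkeeping reduction "to the case $m=1$." If you want to keep a self-contained argument you should either restrict to prime $c$ and then sieve, or follow Goldfeld--Li's proof rather than the Gauss-sum heuristic; otherwise the clean course is simply to cite \cite{miller2006automorphic}, \cite{goldfeld2006voronoi}, and \cite[Lemma 3]{blomer2012subconvexity} as the paper does.
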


To prove Theorem \ref{thm: q} by applying Lemma \ref{lemma: VSF},
we need to know the asymptotic behaviour of $\Psi^{\pm}$.
This will be done in \S \ref{sec: Psi}.
Our work differs from Blomer~\cite{blomer2012subconvexity} in the nature of the weight function $\Psi^{\pm}$.
We will use the method of Young~\cite{young2014weyl}.
See Young~\cite[\S 8]{young2014weyl} for a more detail discussion of this method.
However, to prove Theorem \ref{thm: t}, we will need the following
asymptotic formula for $\Psi^\pm$.

\begin{lemma}\label{lemma: Psi=M+O}
  Suppose $\psi(y)$ is a smooth function, compactly supported on $[N,2N]$.
  Let $\Psi^\pm(x)$ be defined as in \eqref{eqn: Psi}.
  Then for any fixed integer $K\geq1$, and $xN\gg1$, we have
  \[
    \Psi^\pm(x) = x\int_0^\infty \psi(y) \sum_{\ell=1}^{K} \frac{\gamma_\ell}{(xy)^{\ell/3}}
    e\left(\pm3(xy)^{1/3}\right) dy + O\left((xN)^{1-K/3}\right),
  \]
  where $\gamma_\ell$ are constants depending only
  on $\alpha_1,\alpha_2,\alpha_3$, and $K$.
\end{lemma}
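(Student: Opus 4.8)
The plan is to establish the asymptotic expansion of $\Psi^\pm(x)$ directly from its Mellin--Barnes representation \eqref{eqn: Psi} by the stationary phase (saddle point) method, following the strategy of Li \cite{li2011bounds}, and invoking the $n$-th order weighted stationary phase estimate of Mckee--Sun--Ye \cite{mckee2015improved} to produce all $K$ terms at once.

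First I would simplify the archimedean factors. Write
\[
\gamma^\pm(s):=\prod_{j=1}^{3}\frac{\Gamma(\frac{s+\alpha_j}{2})}{\Gamma(\frac{1-s-\alpha_j}{2})}\pm\frac1i\prod_{j=1}^{3}\frac{\Gamma(\frac{1+s+\alpha_j}{2})}{\Gamma(\frac{2-s-\alpha_j}{2})},
\]
so that $\Psi^\pm(x)=x\frac{1}{2\pi i}\int_{(\sigma)}(\pi^3x)^{-s}\gamma^\pm(s)\tilde\psi(1-s)\,ds$. The reflection formula $\Gamma(w)\Gamma(1-w)=\pi/\sin\pi w$ together with the duplication formula $\Gamma(\frac w2)\Gamma(\frac{w+1}2)=2^{1-w}\sqrt\pi\,\Gamma(w)$ converts the first product into $\frac{2^{3-3s}}{\pi^{3/2}}\prod_j\Gamma(s+\alpha_j)\cos\frac{\pi(s+\alpha_j)}2$ and the second into the same with $\cos$ replaced by $\sin$ (using $\alpha_1+\alpha_2+\alpha_3=0$). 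Expanding the products of cosines and sines into exponentials $e^{\pm i\pi(s+\alpha_j)/2}$, the $\pm\frac1i$-combination collapses to
\[
\gamma^\pm(s)=\frac{2^{1-3s}}{\pi^{3/2}}\prod_{j=1}^3\Gamma(s+\alpha_j)\Big(e^{\pm 3i\pi s/2}+e^{\mp i\pi s/2}\sum_{j=1}^3 e^{\pm i\pi\alpha_j}\Big).
\]
On $\Re s=\sigma$ one has $\prod_j\Gamma(s+\alpha_j)\asymp|\Im s|^{3\sigma-3/2}e^{-3\pi|\Im s|/2}$, so the term $\prod_j\Gamma(s+\alpha_j)e^{\pm3i\pi s/2}$ has polynomial size $\asymp|\Im s|^{3\sigma-3/2}$ on the half-line $\mp\Im s>0$, is doubly exponentially small on the other half-line, while the three $e^{\mp i\pi s/2}$ terms are exponentially small throughout. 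This isolates a single archimedean oscillation; it is the source of the one-signed phase $e(\pm3(xy)^{1/3})$, and everything else will go into the error term.

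Next I would detach $\tilde\psi(1-s)=\int_0^\infty\psi(y)y^{-s}\,dy$ and regularise. Since $\gamma^\pm$ grows polynomially on $\Re s=\sigma$ and one cannot lower $\sigma$ below $7/32$ (the abscissa of the poles of $\gamma^\pm$), the bare kernel $\frac{1}{2\pi i}\int_{(\sigma)}(\pi^3 z)^{-s}\gamma^\pm(s)\,ds$ diverges; I would therefore fix $\sigma=K/3$ (which exceeds $7/32$ for every $K\ge1$) and an integer $M\ge K$, insert $(-s)^{-M}$ to make the integral absolutely convergent, and compensate by $M$ copies of $z\frac{d}{dz}$, obtaining after interchanging the $s$- and $y$-integrals
\[
\Psi^\pm(x)=x\int_0^\infty\psi(y)\,\mathcal{H}^\pm(xy)\,dy,\qquad \mathcal{H}^\pm(z)=\Big(z\tfrac{d}{dz}\Big)^{\!M}\frac{1}{2\pi i}\int_{(\sigma)}(\pi^3 z)^{-s}\,\frac{\gamma^\pm(s)}{(-s)^M}\,ds.
\]
To the now convergent $z$-integral I would apply the saddle point method: inserting Stirling's expansion of $\prod_j\Gamma(s+\alpha_j)$ to $K$ terms and combining with $(\pi^3 z)^{-s}$ and $e^{\pm3i\pi s/2}$, the exponent $-s\log(\pi^3 z)+3s\log\frac{is}{2e}$ has a stationary point at $s_0=\mp2\pi i\,z^{1/3}$, lying on the dominant half-line $\mp\Im s>0$ with $|s_0|\asymp z^{1/3}\gg1$; this last condition is exactly where the hypothesis $xN\gg1$ enters, ensuring Stirling is valid at the saddle. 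Deforming the contour to pass through $s_0$ at height $|\Im s|\asymp z^{1/3}$ crosses no poles (they all sit at bounded $|\Im s|$), and the stationary phase evaluation pushed to order $K$ as in \cite{mckee2015improved}, followed by the $M$ differentiations $z\frac{d}{dz}$ (using $z\frac{d}{dz}\,e(\pm3z^{1/3})=\pm2\pi i\,z^{1/3}e(\pm3z^{1/3})$), yields
\[
\mathcal{H}^\pm(z)=e(\pm3z^{1/3})\sum_{\ell=1}^{K}\gamma_\ell\,z^{-\ell/3}+O\big(z^{-K/3}\big),\qquad z\gg1,
\]
with $\gamma_\ell$ depending only on $\alpha_1,\alpha_2,\alpha_3,K$. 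Integrating against $x\psi(y)$ over $y\in[N,2N]$ gives the main term, and the remainder contributes $\ll x\int_N^{2N}|\psi(y)|(xy)^{-K/3}\,dy\ll(xN)^{1-K/3}$. The pieces discarded along the way — the Stirling remainder, the exponentially small $e^{\mp i\pi s/2}$-terms, the doubly exponentially small ``wrong half-line'' part of $\gamma^\pm e^{\pm3i\pi s/2}$, and the range $|\Im s|\ll1$ where Stirling does not apply (contributing $\ll z^{-\sigma}=z^{-K/3}$ to $\mathcal{H}^\pm$) — are each majorised in the same way.

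The main obstacle is the stationary phase analysis of the regularised kernel: one must produce the full $K$-term expansion with its explicit constants and, crucially, control the error term uniformly in $z=xy\gg1$, even though the saddle $|s_0|\asymp z^{1/3}$ recedes to infinity and the underlying Mellin--Barnes integral only becomes absolutely convergent after the regularisation (so the remainder estimate must survive the $M$ subsequent differentiations). This is precisely the $n$-th order weighted stationary phase estimate worked out in \cite{mckee2015improved}, here transplanted to the Gamma quotient $\gamma^\pm$. A secondary subtlety is the Stokes-type bookkeeping in the simplification of $\gamma^\pm(s)$, which singles out the single oscillation $e(\pm3(xy)^{1/3})$ with the sign matching $\Psi^\pm$ and relegates the conjugate oscillation and the exponentially decaying (``$K$-Bessel type'') contributions to the error.
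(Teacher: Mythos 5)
Your argument is correct and is essentially the proof that the paper outsources to its citations: the paper's "proof" of this lemma is simply "See Li [Lemma 6.1] and Blomer [Lemma 6]," and Li's proof is exactly your route — reflection/duplication to reduce $\gamma^\pm(s)$ to $\prod_j\Gamma(s+\alpha_j)$ times a single dominant exponential $e^{\pm 3i\pi s/2}$, then Stirling and a saddle at $s_0=\mp 2\pi i(xy)^{1/3}$ (which is where $xN\gg1$ is used) producing the phase $e(\pm3(xy)^{1/3})$ and the powers $(xy)^{-\ell/3}$. The only cosmetic deviation is your $(-s)^{-M}$ regularisation with compensating $(z\,d/dz)^M$; Li and Blomer instead keep $\tilde\psi(1-s)$ inside and use its rapid decay in $|\Im s|$ (from integrating by parts in $y$) to justify convergence and the interchange — both devices are legitimate and lead to the same expansion.
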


\begin{proof}
  See Li~\cite[Lemma 6.1]{li2009central} and Blomer~\cite[Lemma 6]{blomer2012subconvexity}.
\end{proof}

\subsection{The stationary phase lemma}

In this subsection, we will recall a result in McKee--Sun--Ye~\cite{mckee2015improved},
which will be used to prove Theorem \ref{thm: t}.
Let $f(x)$ be a real function, $2n+3$ times continuously differentiable for $\alpha\leq x\leq\beta$.
Suppose that $f'(x)$ changes signs only at $x=\gamma$, from negative to positive, with $\alpha<\gamma<\beta$.
Let $g(x)$ be a real function, $2n+1$ times continuously differentiable for $\alpha\leq x\leq\beta$.
Denote
\begin{equation}\label{eqn: H_i}
  H_1(x):=\frac{g(x)}{2\pi if^\prime(x)},\quad \textrm{and} \quad
  H_i(x):=-\frac{H_{i-1}^\prime(x)}{2\pi if^\prime(x)}
  \quad \textrm{for $i\geq2$.}
\end{equation}
Let
\begin{equation}\label{eqn: lambda_k}
  \lambda_k := \frac{f^{(k)}(\gamma)}{k!}, \quad \textrm{for}\quad k=2,\ldots,2n+2,
\end{equation}
\begin{equation}\label{eqn: eta_k}
  \eta_k := \frac{g^{(k)}(\gamma)}{k!}, \quad \textrm{for}\quad k=0,\ldots,2n,
\end{equation}
and
\begin{equation}\label{eqn: varpi_k}
  \varpi_k = \eta_k + \sum_{\ell=0}^{k-1}\eta_\ell
  \sum_{j=1}^{k-\ell}\frac{C_{k\ell j}}{\lambda_{2}^j}
  \sum_{\substack{3\leq n_1,\ldots,n_j\leq 2n+3 \\ n_1+\cdots+n_j=k-\ell+2j}}
  \lambda_{n_1}\cdots\lambda_{n_j},
\end{equation}
for some constant coefficients $C_{k\ell j}$.
See~\cite[\S2]{mckee2015improved} for more details.

\begin{lemma}\label{lemma: MSY}
  Let $f(x)$, $g(x)$, and $H_k(x)$ be defined as above.
  Suppose that there are positive parameters $M_0$, $N_0$, $T_0$, $U_0$, with
  \begin{equation}\label{eqn: M0}
    M_0 > \beta-\alpha,
  \end{equation}
  and positive constants $C_r$ such that for $\alpha\leq x\leq\beta$,
  \begin{equation}\label{eqn: f^(r)}
    |f^{(r)}(x)|\leq C_r \frac{T_0}{M_0^r},\quad  \text{for}\quad r=2,3,...,2n+3,
  \end{equation}
  \begin{equation}\label{eqn: f''}
    |f^{\prime\prime}(x)| \geq \frac{T_0}{C_2M_0^2},
  \end{equation}
  and
  \begin{equation}\label{eqn: g^(s)}
    |g^{(s)}(x)|\leq C_s\frac{U_0}{N_0^s},\quad \text{for}\quad s=0,1,2,...,2n+1.
  \end{equation}
  If $T_0$ is sufficiently large comparing to the constants $C_r$,
  we have for $n\geq2$ that
  \begin{equation}\label{eqn: MSY}
    \begin{split}
        & \int_{\alpha}^{\beta}g(x)e(f(x))dx
        = \frac{e\Big(f(\gamma)+\frac{1}{8}\Big)}{\sqrt{f''(\gamma)}}
            \Big(g(\gamma)+\sum_{j=1}^{n}\varpi_{2j}\frac{(-1)^{j}(2j-1)!!}{(4\pi i\lambda_2)^j}\Big)
           \\
        & \hskip 60pt  + \Big[e(f(x))\cdot\sum_{i=1}^{n+1}H_{i}(x)\Big]_{\alpha}^{\beta}
        + O\left(\frac{U_0M_0^{2n+5}}{T_0^{n+2}N_0^{n+2}}\Big(\frac{1}{(\gamma-\alpha)^{n+2}}+\frac{1}{(\beta-\gamma)^{n+2}}\Big)\right) \\
        & \hskip 60pt + O\left(\frac{U_0M_0^{2n+4}}{T_0^{n+2}}
\Big(\frac{1}{(\gamma-\alpha)^{2n+3}}+\frac{1}{(\beta-\gamma)^{2n+3}}\Big)\right)\\
		& \hskip 60pt + O\left(\frac{U_0M_0^{2n+4}}{T_0^{n+2}N_0^{2n}}\Big(\frac{1}{(\gamma-\alpha)^{3}}+\frac{1}{(\beta-\gamma)^{3}}\Big)\right) 
        + O\left(\frac{U_0}{T_0^{n+1}}\Big(\frac{M_0^{2n+2}}{N_0^{2n+1}}+M_0\Big)\right).
    \end{split}
  \end{equation}
\end{lemma}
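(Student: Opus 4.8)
The plan is to iterate the basic non-stationary/integration-by-parts identity $\int g\,e(f)\,dx = \big[e(f)H_1\big] + \int H_1'\,e(f)\,dx$ a total of $n+1$ times, which yields the boundary sum $\big[e(f(x))\sum_{i=1}^{n+1}H_i(x)\big]_\alpha^\beta$ plus a remaining integral $\int_\alpha^\beta H_{n+1}'(x)\,e(f(x))\,dx$ (up to a sign absorbed into the $H_i$). The key technical input here is a uniform bound for each $H_i$ and $H_i'$ on $[\alpha,\beta]$ in terms of $M_0,N_0,T_0,U_0$: since $H_1 = g/(2\pi i f')$ and $f'$ vanishes only at $\gamma$, one has $|f'(x)| \gg (T_0/M_0^2)|x-\gamma|$ by \eqref{eqn: f''}, and then an induction using the Leibniz rule together with \eqref{eqn: f^(r)} and \eqref{eqn: g^(s)} gives bounds for $H_i^{(\ell)}$ that carry negative powers of $|x-\gamma|$; integrating the resulting bound for $H_{n+1}'$ against $dx$ over $[\alpha,\gamma]$ and $[\gamma,\beta]$ produces exactly the four $O(\cdots)$ error terms displayed in \eqref{eqn: MSY}, with the various sums over $j,t$ tracking how many derivatives fall on $f'$ versus on $g$ at each stage. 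This is the bookkeeping-heavy part, but it is the same argument as in \cite[\S2]{mckee2015improved}.

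The main term comes from isolating the contribution of a small neighbourhood of the stationary point $\gamma$. First I would localise: introduce a smooth partition so that the portion of $\int g\,e(f)$ away from $\gamma$ is handled by the integration-by-parts bounds above and is absorbed into the error, and near $\gamma$ perform the substitution $y = h(x-\gamma)$ with $f(x) - f(\gamma) = \lambda_2 h^2(x-\gamma)$, which is a valid change of variables by \eqref{eqn: f''} (so $\lambda_2 = \tfrac12 f''(\gamma) > 0$ and $h$ is monotone with the sign of $x-\gamma$). Then $\int g(x) e(f(x))\,dx = e(f(\gamma))\int g(x)\frac{dx}{dy} e(\lambda_2 y^2)\,dy$; expanding $g(x)\frac{dx}{dy}$ in its Taylor series $\sum_k \varpi_k y^k$ around $y=0$ — whose coefficients are exactly the $\varpi_k$ given by \eqref{eqn: varpi_k}, as computed in \cite[Lemma 3.4]{mckee2015improved} — and using the Gaussian moment evaluation $\int_{-\infty}^\infty y^{2j} e(\lambda_2 y^2)\,dy = \frac{(-1)^j (2j-1)!!}{(4\pi i \lambda_2)^j}\cdot\frac{e(1/8)}{\sqrt{\lambda_2}}$ (odd powers integrating to zero) gives the stated main term $\frac{e(f(\gamma)+1/8)}{\sqrt{f''(\gamma)}}\big(g(\gamma) + \sum_{j=1}^n \varpi_{2j}\frac{(-1)^j(2j-1)!!}{(4\pi i\lambda_2)^j}\big)$, once one notes $\sqrt{f''(\gamma)} = \sqrt{2\lambda_2}$ and matches constants. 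The tail of the Taylor expansion (the $y^{2n+1}$ remainder) and the tails of the Gaussian integrals beyond the localisation radius again feed into the $O(\cdots)$ terms.

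The hard part will be organising the error analysis so that the four explicit big-$O$ expressions in \eqref{eqn: MSY} emerge with precisely the exponents shown: one must track, through $n+1$ integrations by parts and the Leibniz expansions of each $H_i'$, how each derivative either hits $f'$ (contributing a factor $\sim M_0^2/(T_0|x-\gamma|)$ together with extra $\partial$'s on $f$, controlled by \eqref{eqn: f^(r)}) or hits $g$ (contributing $\sim 1/N_0$ via \eqref{eqn: g^(s)}), and then integrate the worst-case bound in $x$ — the condition \eqref{eqn: M0} that $M_0 > \beta - \alpha$ is what makes the endpoint contributions dominate and produces the powers of $(\gamma-\alpha)^{-1}$ and $(\beta-\gamma)^{-1}$. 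Rather than reproduce this verbatim I would cite \cite[\S2 and \S3]{mckee2015improved} for the detailed derivation of \eqref{eqn: varpi_k} and the error terms, and include only the localisation step and the Gaussian evaluation in full, since those are what make the structure of the main term transparent.
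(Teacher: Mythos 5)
The paper's own ``proof'' of Lemma~\ref{lemma: MSY} is a bare citation to Mckee--Sun--Ye~\cite[Theorem~3.6]{mckee2015improved}; your proposal is a sketch of that cited argument and you also defer to~\cite[\S2 and \S3]{mckee2015improved} for the detailed bookkeeping, so at the level of what's actually on the page the two ``proofs'' coincide. Your outline of the structure (iterated integration by parts for the boundary and error terms, localisation at $\gamma$, change of variable $y=h(x-\gamma)$, Gaussian moments $\int y^{2j}e(\lambda_2 y^2)\,dy$) is the right shape of argument.

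One structural wrinkle in your sketch worth flagging, though: $H_1=g/(2\pi i f')$ and all subsequent $H_i$ are singular at $\gamma$ (since $f'(\gamma)=0$ with $\gamma\in(\alpha,\beta)$), so you cannot carry out $n+1$ integrations by parts on all of $[\alpha,\beta]$ and then ``isolate a neighbourhood of $\gamma$'' afterwards; the step ordering doesn't work as written. And if you instead localise with a smooth partition of unity, the integration-by-parts boundary contributions will carry cutoff factors and will not assemble into the exact expression $\bigl[e(f(x))\sum_{i=1}^{n+1}H_i(x)\bigr]_\alpha^\beta$ stated in the lemma. The cited argument resolves this differently -- essentially by splitting at $\gamma$ and taking a symmetric limit, or equivalently by first subtracting off a local polynomial approximation of the integrand near $\gamma$ so that the resulting $H_i$ stay integrable through $\gamma$; the explicit negative powers of $(\gamma-\alpha)$ and $(\beta-\gamma)$ in the error terms come from integrating the resulting bounds on $H_{n+1}'$ up to the stationary point, which is only visible if you organise the proof that way. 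If you intend to include a sketch rather than a bare citation, you should adjust this step to match the actual decomposition in~\cite{mckee2015improved}.
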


\begin{proof}
  See McKee--Sun--Ye~\cite[Proposition 2.2]{mckee2015improved}.
\end{proof}

\section{Initial setup of Theorem \ref{thm: q}}\label{sec: setup q}


We are now ready to start with the proof of Theorem \ref{thm: q}.
As indicated in the introduction,
both results follow rather easily from the following bound.
\begin{proposition}\label{prop: q}
  With notation as above, for any $\ve>0$, $T$ large, and $M\asymp T^{1/2}$,
  we have
  \[
    \sum_{\substack{u_j\in\cB^*(q)\\ T-M\leq t_j\leq T+M}} L(1/2,\phi\times u_j\times\chi)
            + \frac{1}{4\pi}\int_{T-M}^{T+M}|L(1/2+it,\phi\times\chi)|^2dt
    \ll_{\phi,\ve} q^{5/4}TM(qT)^{\ve}.
  \]
\end{proposition}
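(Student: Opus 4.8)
The plan is to bound the left-hand side by a harmonically weighted spectral sum, apply the $\Gamma_0(q)$-Kuznetsov formula, and then control the resulting Kloosterman terms using the $GL(3)$ Voronoi formula together with a large-sieve estimate.

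\emph{Step 1: reduction to a weighted spectral average.} By Lapid's nonnegativity theorem \cite{lapid2003nonnegativity}, $L(1/2,\phi\times u_j\times\chi)\ge 0$, while $|L(1/2+it,\phi\times\chi)|^2=L(1/2,\phi\times E_{\fa,t}\times\chi)\ge 0$ trivially (using that $\phi$ is self-dual and $\chi$ real). Fix an even, holomorphic, rapidly decaying test function $h(t)$ of Gaussian type localised at scale $M$ around $\pm T$ — so that conditions (i)--(ii) of Lemma~\ref{lemma: KTF} hold and $h(t)\gg1$ on $[T-M,T+M]$. Using $\omega_j^*\gg q^{-1}(qt_j)^{-\ve}$ and $\omega^*(t)\gg q^{-1-\ve}|t|^{-\ve}$ for $|t|\ge1$, and positivity of all terms, the left-hand side of the proposition is $\ll q^{1+\ve}(qT)^{\ve}\,\mathcal{M}$, where
\[
\mathcal{M}:={\sum_j}' h(t_j)\,\omega_j^*\, L(1/2,\phi\times u_j\times\chi) + \frac{1}{4\pi}\int_{-\infty}^{\infty} h(t)\,\omega^*(t)\,|L(1/2+it,\phi\times\chi)|^2\,dt.
\]
It therefore suffices to prove $\mathcal{M}\ll q^{1/4}TM(qT)^{\ve}$.

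\emph{Step 2: approximate functional equation and Kuznetsov.} Apply Lemma~\ref{lemma: AFE} to both terms of $\mathcal{M}$, and use Lemma~\ref{lemma: V_t}(ii) to separate the spectral variable from the length variable inside $V_{t_j}$ and $V_t$; up to negligible errors this replaces the weight by a fixed smooth function supported on a dyadic block $m^2n\asymp N$ with $1\le N\ll(qT)^{3+\ve}$, at the cost of multiplying $h$ by an even admissible factor. The cuspidal and Eisenstein parts are then simultaneously transformed by the $m$-index $=1$ case of Lemma~\ref{lemma: KTF}, so that a fixed $N$-block of $\mathcal{M}$ becomes
\[
\sum_{m}\frac{1}{m}\sum_{n}\frac{A(m,n)\chi(n)}{\sqrt{n}}\,W\!\left(\frac{m^2n}{N}\right)\left(\tfrac{1}{2}\delta_{n,1}H+\tfrac{1}{2}\sum_{q\mid c}\frac{1}{c}\sum_{\pm}S(n,\pm1;c)\,H^{\pm}\!\left(\frac{4\pi\sqrt{n}}{c}\right)\right).
\]
The diagonal $n=1$ is harmless: $H\asymp TM$, and by \eqref{eqn: RS bound} and Cauchy--Schwarz $\sum_{m\asymp\sqrt N}|A(m,1)|/m\ll(qT)^{\ve}$, so its total contribution is $O(TM(qT)^{\ve})$, within the target.

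\emph{Step 3: Voronoi and the exponential sum.} For the off-diagonal one records the standard properties of the Bessel transforms $H^{\pm}$ attached to $h$: they are negligible unless $x\gg T^{1-\ve}$ (which, since $q\mid c$, forces $n\gg q^{2}T^{2-\ve}$), and there they admit an oscillatory asymptotic expansion with phase $e(\pm x/2\pi)$ and amplitude of size $\asymp TM/\sqrt{x}$. One then opens the quadratic character by its Gauss sum, $\chi(n)=\tau(\chi)^{-1}\sum_{a(q)}\overline{\chi}(a)e(an/q)$, and combines $e(an/q)$ with the additive character $e(\pm n\bar d/c)$ coming from $S(n,\pm1;c)$ (writing $c=qc'$) into a single twist $e(n\bar D/(qc'))$; applying the $GL(3)$ Voronoi formula (Lemma~\ref{lemma: VSF}) to the $n$-sum, and inserting the $\Psi^{\pm}$-asymptotics developed in \S\ref{sec: Psi} by Young's method, turns the off-diagonal — after dyadic splitting of $m$, $c'$ and of the dual variable $n_2$ — into the multiple exponential/character sum $\cS_\sigma(q,N;\delta)$.

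\emph{Step 4: the large-sieve estimate, and the main obstacle.} Finally one bounds $\cS_\sigma(q,N;\delta)$ by evaluating the complete Kloosterman/Gauss sums modulo $qc'$, exploiting the residual oscillation from $H^{\pm}$ and $\Psi^{\pm}$, and applying the hybrid large-sieve inequality together with the mean-value bounds of Conrey--Iwaniec \cite{conrey2000cubic}, Blomer \cite{blomer2012subconvexity} and Young \cite{young2014weyl}; the estimates \eqref{eqn: blomer} handle the $GL(3)$ coefficients $A(n_2,n_1)$ on average. I expect this last step to be the main obstacle: the real work is to organise the multiple sum so that the large sieve yields the saving $q^{1/4}$ over the trivial bound $q$, uniformly in $T$, which forces a delicate balancing of the dyadic parameters $N$, $m$, $c'$, $n_2$ against the two error terms produced by the $H^{\pm}$- and $\Psi^{\pm}$-asymptotics; the remaining tails are negligible.
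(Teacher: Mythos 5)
Your high-level roadmap (Lapid nonnegativity, reduction to $\mathcal{M}$, approximate functional equation with Lemma~\ref{lemma: V_t}(ii), Kuznetsov with $m=1$, $GL(3)$ Voronoi, then large sieve) does coincide with the architecture of the paper's proof of Proposition~\ref{prop: q}, and your Steps 1--2, including the diagonal bound $O(TM(qT)^{\ve})$ via \eqref{eqn: RS bound}, are carried out correctly. However, your Step 4 is not a proof — you explicitly flag the final estimation of $\cS_\sigma(q,N;\delta)$ as ``the main obstacle'' and leave it as a plan — and that step is precisely where the entire content of Proposition~\ref{prop: q} resides. Several specific ingredients that the paper relies on are absent from your proposal and cannot be recovered from the ``opening $\chi$ by its Gauss sum'' description in Step 3.

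First, the correct arithmetic structure is not a single Gauss sum twist $e(n\bar D/(qc'))$: after opening $S(\delta n,\sigma;c)$, splitting $n$ into residues mod $c$, and applying the Voronoi formula, the $q$-part of the complete sum is the Conrey--Iwaniec character sum $H(w;q)$ of \eqref{eqn: H(w;q)}, whose decomposition \eqref{eqn: H to H^*}--\eqref{eqn: H^*=} into character sums $\psi\pmod q$ weighted by $g(\chi,\psi)\ll q^{1+\ve}$ is what makes the hybrid large sieve (Lemma~\ref{lemma: average H}) produce a saving of $q^{1/4}$ rather than nothing. Second, Blomer's explicit evaluation \eqref{eqn: cS=} of $\cS_\sigma$ forces a case analysis over which of $c_1',c_2',h,k$ equals $q$; the proof is not uniform and each case must be bounded separately with the correct dyadic ranges for $D_2,N_2$ coming from the support conditions \eqref{eqn: D_2}--\eqref{eqn: N_2}, which in turn come out of the $\Psi$-asymptotics of Lemma~\ref{lemma: Psi}. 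Third, the choice $M\asymp T^{1/2}$ is not free: the $\sigma=-1$ (K-Bessel) contribution is acceptable for any $T^{1/8+\ve}\ll M\ll T^{1/2}$, but the $\sigma=+1$ (J-Bessel) contribution is bounded by the target $q^{1/4}TM(qT)^\ve$ only when $M\asymp T^{1/2}$, since a second Voronoi application is unavailable here and the range $U\asymp x^{1/3}$ of the Mellin variable then matters; your sketch does not engage with this balance at all. Until those three pieces are supplied, the proposal identifies the scaffolding of the argument but does not establish the stated bound.
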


Theorem \ref{thm: q} follows from the above proposition.
The key ingredient is Lapid's theorem~\cite{lapid2003nonnegativity}
about the nonnegativity of $L(1/2,\phi\times u_j\times\chi)$.
See Blomer~\cite[\S4]{blomer2012subconvexity} for more details.

To prove Proposition \ref{prop: q},
we introduce the spectrally normalized first moment of the central values of $L$-functions
\begin{equation}\label{eqn: cW}
  \cM := \sum_{u_j\in\cB^*(q)}h(t_j)\omega_j^* L(1/2,\phi\times u_j\times\chi)
  + \frac{1}{4\pi}\int_{-\infty}^{\infty}h(t)\omega^*(t)|L(1/2+it,\phi\times\chi)|^2dt,
\end{equation}
where
\begin{equation}\label{eqn: f(t)}
  h(t) := \frac{1}{\cosh\left(\frac{t-T}{M}\right)} + \frac{1}{\cosh\left(\frac{t+T}{M}\right)}.
\end{equation}
Here we choose the above weight because we can use Young's results~\cite{young2014weyl} directly.
However, maybe the following weight function of Li
\begin{equation*}
  h(t) = e^{-\frac{(t-T)^2}{M^2}} + e^{-\frac{(t+T)^2}{M^2}}
\end{equation*}
will work too. By \eqref{eqn: omega^*_j} and \eqref{eqn: omega^*(t)}, we have
\[
  \sum_{\substack{u_j\in\cB^*(q)\\ T-M\leq t_j\leq T+M}} L(1/2,\phi\times u_j\times\chi)
            + \frac{1}{4\pi}\int_{T-M}^{T+M}|L(1/2+it,\phi\times\chi)|^2dt \\
  \ll \cM q^{1+\ve}T^\ve,
\]
for any $\ve>0$.
Therefore, to prove Proposition \ref{prop: q}, we just need to prove
\begin{equation}\label{eqn: cW<<}
  \cM \ll_{\phi,\ve} q^{1/4}TM(qT)^\ve.
\end{equation}

Applying Lemma \ref{lemma: AFE} to $\cM$, we have
\begin{equation*}
  \begin{split}
      \cM & = 2\sum_{u_j\in\cB^*(q)}h(t_j)\omega_j^* \sum_{m,n=1}^{\infty}\frac{A(m,n)\lambda_j(n)\chi(n)}{(m^2n)^{1/2}}V_{t_j}\left(\frac{m^2n}{q^3}\right) \\
       & \hskip 90pt + 2\frac{1}{4\pi}\int_{-\infty}^{\infty}h(t)\omega^*(t)
         \sum_{m,n=1}^{\infty}\frac{A(m,n)\eta_t(n)\chi(n)}{(m^2n)^{1/2}}V_{t}\left(\frac{m^2n}{q^3}\right)dt.
  \end{split}
\end{equation*}
By Lemma \ref{lemma: V_t}, we can truncate the $m,n$-sums at
\[
  m^2n\leq (qT)^{3+\ve}
\]
at the cost of a negligible error.
Now we handle the weight $V_t(y)$. By Lemma \ref{lemma: V_t} (we choose $U=\log^2(qT)$),
to prove \eqref{eqn: cW<<}, we need to prove
\[
  \begin{split}
      & 2\sum_{m^2n\leq (qT)^{3+\ve}} \frac{A(m,n)\chi(n)}{(m^2n)^{1/2+u}}
            \bigg(\sum_{u_j\in\cB^*(q)}h_{k,l}(t_j)\omega_j^* \lambda_j(n) \\
       & \hskip 150pt  + \frac{1}{4\pi}\int_{-\infty}^{\infty}h_{k,l}(t)\omega^*(t)\eta_t(n)dt \bigg)
       \ll_{\phi,\ve} q^{1/4}TM(qT)^\ve,
  \end{split}
\]
uniformly in $u\in[\ve-i\log^2(qT),\ve+i\log^2(qT)]$, where
\begin{equation}\label{eqn: h_kl}
  h_{k,l}(t) = T^{-2k-2l} \left(t^2-T^2\right)^l h(t).
\end{equation}
Now we apply the Kuznetsov formula with $m=1$
(note that $m$ has different meaning here), we arrive at bounding
\[
  \sum_{m^2n\leq (qT)^{3+\ve}}\frac{A(m,n)\chi(n)}{(m^2n)^{1/2+u}}
  \left(\delta_{n,1}H + \sum_{q|c}\frac{1}{c}\sum_{\pm}S(n,\pm1;c)H^{\pm}\left(\frac{4\pi \sqrt{n}}{c}\right)\right),
\]
where $H,\ H^\pm$ are defined as in \eqref{eqn: H} with $h(t)=h_{k,l}(t)$.
We will only deal with the case $k=l=0$, since the others can be handled similarly.
By \eqref{eqn: RS bound}, and the fact $H\ll TMT^\ve$,
we know the diagonal term is bounded by
\begin{equation}\label{eqn: cD}
  \cD := \sum_{m^2n\leq (qT)^{3+\ve}}\frac{A(m,n)\chi(n)}{(m^2n)^{1/2+u}} \delta_{n,1}H
      \ll \sum_{m^2\leq (qT)^{3+\ve}}\frac{|A(m,1)|}{m}|H| \ll TM (qT)^\ve.
\end{equation}

Now we need to bound the off-diagonal terms
\begin{equation}\label{eqn: cR}
  \cR^{\pm} := \sum_{m^2n\leq (qT)^{3+\ve}}\frac{A(m,n)\chi(n)}{(m^2n)^{1/2+u}}
  \sum_{q|c}\frac{1}{c}S(n,\pm1;c)H^{\pm}\left(\frac{4\pi \sqrt{n}}{c}\right).
\end{equation}
By the argument in Blomer~\cite[\S5]{blomer2012subconvexity}, it is then enough to show that
\begin{equation}\label{eqn: sum over cS}
  \sum_{\substack{m^2\delta^3\leq (qT)^{3+\ve}\\ (\delta,q)=1,\ |\mu(\delta)|=1}}
  \frac{|A(1,m)|}{m\delta^{3/2}}  \sup_N \frac{1}{N^{1/2}} |\cS_\sigma(q,N;\delta)|
  \ll  q^{1/4}TM(qT)^\ve,
\end{equation}
for $\sigma\in\{\pm1\}$, where
\begin{equation}\label{eqn: cS}
  \cS_\sigma(q,N;\delta) :=
  \sum_{q|c}\frac{1}{c} \sum_{n}A(n,1)\chi(n) S(\delta n,\sigma;c)
  \psi_{\sigma}\left(\frac{n}{N};\frac{\sqrt{\delta N}}{c}\right),
\end{equation}
with
\begin{equation}\label{eqn: psi_s}
  \psi_\sigma(y;D) := \left\{\begin{array}{ll}
                           w(y)y^{-u}H^+(4\pi \sqrt{y}D), & \textrm{if } \sigma=1, \\
                           w(y)y^{-u}H^-(4\pi \sqrt{y}D), & \textrm{if } \sigma=-1,
                         \end{array} \right.
\end{equation}
$w$ a suitable fixed smooth function with support in $[1,2]$, and
\begin{equation}\label{eqn: N}
  N \leq \frac{(qT)^{3+\ve}}{m^2\delta^3}.
\end{equation}
Here we suppress the dependence on $u$ in $\psi_\sigma(y;D)$.
As Blomer~\cite[\S5]{blomer2012subconvexity} did, by the Voronoi formula, we have
\begin{equation}\label{eqn: cS_s}
  \begin{split}
     &  \cS_\sigma(q,N;\delta) = \frac{\pi^{3/2}}{2} \sum_{\pm} \sum_{q|c}\frac{1}{c^2}
            \sum_{c_1|c} c_1  \sum_{n_1|c_1}\sum_{n_2=1}^{\infty}\frac{A(n_2,n_1)}{n_1n_2}
        \Psi_{\sigma}^{\pm}\left(\frac{n_1^2n_2N}{c_1^3};\frac{\sqrt{\delta N}}{c}\right)
            \cT_{\delta,c_1,n_1,n_2}^{\pm,\sigma}(c,q),
  \end{split}
\end{equation}
where
\begin{equation}\label{eqn: cT}
  \begin{split}
    \cT_{\delta,c_1,n_1,n_2}^{\pm,\sigma}(c,q) & =
        {\sum_{b(c_1)}}^* {\sum_{d(c)}}^{*}e\left(\sigma\frac{\bar{d}}{c}\right)
        \sum_{a(c)}\chi(a)e\left(\frac{-\bar{b}a}{c_1}\right)
         e\left(\frac{\delta da}{c}\right)
            \underset{f(c_1/n_1)}{{\sum}^*}e\left(\frac{bf\pm n_2\bar{f}}{c_1/n_1}\right),
  \end{split}
\end{equation}
and $\Psi_{\sigma}^{\pm}(x;D)$ is defined as in \eqref{eqn: Psi} with $\psi(x)=\psi_\sigma(x;D)$.

We end this section by truncating the $c$-sum.
Define
\begin{equation*}
     \cS_\sigma(q,N;C;\delta)
     = \sum_{\substack{c\asymp C \\ q|c}}\frac{1}{c} \sum_{n}A(n,1)\chi(n) S(\delta n,\sigma;c)
  \psi_{\sigma}\left(\frac{n}{N};\frac{\sqrt{\delta N}}{c}\right).
\end{equation*}
Using the weak bound $H^\pm(y)\ll Ty^{3/4}$, and the Weil bound for Kloosterman sums,
we have
\[
  \cS_\sigma(q,N;C;\delta)
  \ll T (qT)^{33/8+\ve} C^{-1/4+\ve},
\]
which is good enough if $C$ is a large power of $qT$.
Therefore, it suffices to bound $\cS$ with $C\ll (qT)^B$
for some large but fixed $B$.

\section{Analytic separation of variables}\label{sec: Psi}

Our goal in this section is to handle $\Psi_{\sigma}^{\pm}(x;D)$.
We follow the approach in Young~\cite{young2014weyl}.
The following argument will need the stationary phase method.
We'll use the following lemma (see~\cite[Lemma 8.1 and Proposition 8.2]{blomer2013distribution}).
\begin{lemma}\label{lemma: SP}
  Suppose that $w$ is a smooth weight function with compact support on $[X, 2X]$,
  satisfying $w^{(j)}(t) \ll X^{-j}$, for $X \gg 1$
  (in particular, $w$ is inert with uniformity in $X$) and $j\geq0$.
  Also suppose that $\phi$ is smooth and satisfies $\phi^{(j)}(t) \ll \frac{Y}{X^j}$
  for some $Y \gg X^{\varepsilon}$ and $j\geq1$.  Let
  \begin{equation}
     I = \int_{-\infty}^{\infty} w(t) e^{i \phi(t)} dt.
  \end{equation}
  \begin{enumerate}
    \item If $\phi'(t) \gg \frac{Y}{X}$ for all $t$ in the support of $w$,
        then $I \ll_A Y^{-A}$ for $A$ arbitrarily large.
    \item If $\phi''(t) \gg \frac{Y}{X^2}$ for all $t$ in the support of $w$,
        and there exists $t_0 \in \bbR$ such that $\phi'(t_0) = 0$ (note $t_0$ is necessarily unique), then
        \begin{equation}
          I = \frac{e^{i \phi(t_0)}}{\sqrt{\phi''(t_0)}} F(t_0) + O(Y^{-A}),
        \end{equation}
        where $F$ is an inert function (depending on $A$, but uniformly in $X$ and $Y$) supported on $t_0 \asymp X$.
  \end{enumerate}
\end{lemma}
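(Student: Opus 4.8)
The plan is to treat the two assertions in turn, with part (1) serving also as the main tool for part (2). For part (1) I would proceed by repeated non-stationary integration by parts. Setting $L g := -\frac{1}{i}\frac{d}{dt}\bigl(g/\phi'\bigr)$, one has $I = \int (L^A w)(t)\,e^{i\phi(t)}\,dt$ for every integer $A\geq 0$, the boundary terms vanishing by compact support. The hypotheses make $\phi'$ an inert function of scale $X$ and size $Y/X$ (meaning that $f^{(j)}\ll_j (Y/X)X^{-j}$) that is moreover bounded below by a constant times $Y/X$; hence $1/\phi'$ is inert of scale $X$ and size $X/Y$, and by induction $L^A w$ is supported on $[X,2X]$, inert of scale $X$, and of size $\ll_A Y^{-A}$, since each application of $L$ divides the size by $\asymp Y$ (dividing by $\phi'$ multiplies the size by $X/Y$, and the ensuing differentiation costs a factor $X^{-1}$ because everything in sight is inert of scale $X$). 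Integrating over $[X,2X]$ gives $I\ll_A X\,Y^{-A}$, and since $Y\gg X^{\ve}$ forces $X\ll Y^{1/\ve}$, taking $A$ large yields $I\ll_B Y^{-B}$. Two points are worth recording: the implied constants are polynomial in the implied constants of the hypotheses (so the bound survives a rescaling of $X$), and only the lower bound on $|\phi'|$ together with the upper bounds on $\phi^{(j)}$ for $j\geq 2$ are actually used.

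For part (2), the first step is to localize near $t_0$. Using a smooth dyadic partition, write $w = w_0 + \sum_R w_R$, where $\eta>0$ is a small fixed constant, $w_0$ is supported on $|t-t_0|\ll X^{1-\eta}$, and $w_R$ is supported on $|t-t_0|\asymp R$ with $X^{1-\eta}\ll R\ll X$. On the support of $w_R$ one has $|\phi'(t)| = |\phi'(t)-\phi'(t_0)|\gg (Y/X^2)R$ since $\phi''\gg Y/X^2$ does not change sign, so the argument of part (1), with $X$ replaced by $R$ and $Y$ by $Y_R := Y(R/X)^2 \geq YX^{-2\eta}\gg R^{\ve/2}$, gives $\int w_R\,e^{i\phi}\ll_B Y_R^{-B}\ll Y^{-B/2}$; summing over the $O(\log X)$ scales leaves only the $w_0$-term. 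On $|t-t_0|\ll X^{1-\eta}$ one writes $\phi(t)-\phi(t_0) = (t-t_0)^2 g(t)$ with $g(t) := \int_0^1(1-\tau)\phi''\bigl(t_0+\tau(t-t_0)\bigr)\,d\tau$ smooth, inert of scale $X$, and $\asymp Y/X^2$; then $r(t) := (t-t_0)\sqrt{2g(t)/\phi''(t_0)}$ is a change of variable with $r(t_0)=0$, $r'(t_0)=1$, $r'\asymp 1$, and $\phi(t)-\phi(t_0) = \tfrac12\phi''(t_0)\,r(t)^2$, whence
\[
  I = e^{i\phi(t_0)}\int \widetilde w(s)\,e^{\frac{i}{2}\phi''(t_0)\,s^2}\,ds,\qquad
  \widetilde w(s) := w_0\bigl(t(s)\bigr)\,\frac{dt}{ds},
\]
with $\widetilde w$ inert of scale $X$, of size $\ll 1$, and supported on $|s|\ll X^{1-\eta}$.

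Rescaling $s = Xu$ turns this into $X\int W(u)\,e^{\frac{i}{2}a u^2}\,du$ with $W(u) := \widetilde w(Xu)$ inert of scale $1$, supported on $|u|\ll X^{-\eta}$, and $a := X^2\phi''(t_0)\asymp Y\gg X^{\ve}$ — a classical large-parameter stationary phase integral whose critical point $u=0$ lies in the support. Taylor-expanding $W$ near $u=0$ and using the Fresnel integrals $\int_{\bbR} u^{2\ell}e^{\frac{i}{2}au^2}\,du = c_\ell\, e(1/8)\, a^{-\ell-1/2}$ (the odd powers vanishing), standard stationary phase estimates give $\int W(u)e^{\frac{i}{2}au^2}\,du = e(1/8)\,a^{-1/2}\sum_{\ell\leq J} c_\ell\, a^{-\ell}\,W^{(2\ell)}(0) + O_A(Y^{-A})$. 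Since $Xa^{-1/2} = \phi''(t_0)^{-1/2}$ and $W(0) = \widetilde w(0) = w_0(t_0)$ (because $r'(t_0)=1$), collecting everything yields
\[
  I = \frac{e^{i\phi(t_0)}}{\sqrt{\phi''(t_0)}}\,F(t_0) + O_A(Y^{-A}),\qquad
  F(t_0) := e(1/8)\sum_{\ell\leq J} c_\ell\, a^{-\ell}\,W^{(2\ell)}(0) = e(1/8)\,w_0(t_0) + O(Y^{-1}),
\]
with $F$ supported on $t_0\asymp X$.

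The step I expect to be the actual work is not any individual estimate but the verification that $\widetilde w$, $W$, and ultimately $F$ are genuinely inert — in $t_0$, and in whatever auxiliary parameters $w$ and $\phi$ may carry — with implied constants uniform in $X$ and $Y$. This reduces to bounding the derivatives of the implicitly defined functions $g$, $r$, and the inverse $t=t(s)$ with respect to $t_0$ and those parameters, which one handles via the chain rule, Fa\`{a} di Bruno's formula, and the implicit function theorem, fed by the bounds $\phi^{(j)}\ll YX^{-j}$ and $\phi''\gg YX^{-2}$; keeping all constants polynomial in the hypotheses' constants is precisely what legitimizes the rescaled applications of part (1) in the localization step.
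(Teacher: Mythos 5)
Your argument is correct and is essentially the standard one: the paper does not prove this lemma but simply cites \cite[Lemma 8.1 and Proposition 8.2]{blomer2013distribution}, whose proof proceeds exactly as you do — repeated non-stationary integration by parts for part (1), and for part (2) a dyadic localization around $t_0$, the quadratic change of variable $\phi(t)-\phi(t_0)=\tfrac12\phi''(t_0)r(t)^2$, the Fresnel expansion, and a careful verification that the resulting weight is inert. Your closing remark correctly identifies where the real work lies (the uniform inertness of $F$), which is precisely the content of the cited reference.
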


Here, following Young~\cite{young2014weyl}, we say a smooth function
$f(x_1,\ldots,x_n)$ on $\bbR^n$ \emph{inert} if
\begin{equation}\label{eqn: inert}
  x_1^{k_1}\cdots x_n^{k_n} f^{(k_1,\ldots,k_n)}(x_1,\ldots,x_n) \ll 1,
\end{equation}
with an implied constant depending on $k_1,\ldots,k_n$ and with the
superscript denoting partial differentiation.
Now, we recall some results about $H^\pm$ from Young~\cite[\S7]{young2014weyl}.

\begin{lemma}\label{lemma: H+}
  Let $H^+$ be given by \eqref{eqn: H}.
  There exists a function $g$ depending on $T$ and $M$ satisfying $g^{(j)}(y)\ll_{j,A} (1+|y|)^{-A}$,
  so that
  \begin{equation}\label{eqn: H^+=int}
    H^+(x) = MT\int_{|v|\leq \frac{M^\ve}{M}}\cos(x\cosh(v))e\left(\frac{vT}{\pi}\right)g(Mv)dv + O(T^{-A}).
  \end{equation}
  Furthermore, $H^+(x)\ll T^{-A}$ unless $x\gg MT^{1-\ve}$.
  And if $x\gg MT^{1-\ve}$, then $H^+(x)\ll TMx^{-1/2}$.
\end{lemma}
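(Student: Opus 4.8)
The plan is to reproduce the argument of Young \cite[\S7]{young2014weyl}: open the Bessel transform, interchange the order of integration, and recognise the resulting inner $t$-integral as the Fourier transform of a smooth bump. First I would symmetrise. Since $h$ is even while $t/\cosh(\pi t)$ is odd, the substitution $t\mapsto -t$ in the definition \eqref{eqn: H} of $H^+$ gives
\[
  H^+(x) = i\int_{-\infty}^{\infty}\bigl(J_{2it}(x)-J_{-2it}(x)\bigr)\frac{h(t)\,t}{\cosh(\pi t)}\,dt.
\]
Next I would insert a classical integral representation of the Bessel function, writing $J_{\pm 2it}(x)$ as a combination of Hankel functions and using $H^{(1)}_{2it}(x)=\frac{e^{\pi t}}{\pi i}\int e^{ix\cosh\xi-2it\xi}\,d\xi$ together with its conjugate (valid after a suitable deformation of the $\xi$-contour to ensure convergence). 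The key feature is that the factor $e^{\pm\pi t}$ produced by the Hankel representation exactly cancels the $\cosh(\pi t)$ in the denominator; after interchanging the $t$- and $\xi$-integrals this leaves an expression of the shape
\[
  H^+(x) = \frac1\pi\sum_{\pm}(\pm 1)\int e^{\pm ix\cosh\xi}\Bigl(\int_{-\infty}^{\infty}\frac{h(t)\,t\,e^{\pm\pi t}}{\cosh(\pi t)}\,e^{-2it\xi}\,dt\Bigr)\,d\xi.
\]

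Then I would analyse the inner $t$-integral. The function $t\,e^{\pm\pi t}/\cosh(\pi t)=2t/(1+e^{\mp 2\pi t})$ is smooth, of size $O(|t|)$, and decays exponentially on the half-line where $\pm t<0$; multiplying it by $h(t)=\operatorname{sech}(\tfrac{t-T}{M})+\operatorname{sech}(\tfrac{t+T}{M})$ therefore produces a function which, up to $O(T^{-A})$, equals $\pm\,2t\operatorname{sech}(\tfrac{t\mp T}{M})$, i.e.\ a smooth bump of height $\asymp T$ and width $\asymp M$ centred at $t=\pm T$. Its Fourier transform at frequency $\xi$ is therefore $\asymp TM\,e^{\mp 2iT\xi}G(M\xi)$ for a fixed rapidly decaying smooth function $G$ (the Fourier transforms of $\operatorname{sech}(s)$ and $s\operatorname{sech}(s)$ decay exponentially). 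Combining the two $\pm$ contributions — which merge, after a change of variables $v\mapsto -v$ in one of them, into $e^{ix\cosh v}+e^{-ix\cosh v}=2\cos(x\cosh v)$ — pushing the $\xi$-contour back to the real axis (now legitimate, $G$ being rapidly decaying), relabelling $\xi=v$, and taking $g$ to be the resulting smooth function (so that $g^{(j)}(y)\ll_{j,A}(1+|y|)^{-A}$ automatically, being a Fourier transform of an exponentially decaying function) yields $H^+(x)=MT\int_{\bbR}\cos(x\cosh v)\,e(vT/\pi)\,g(Mv)\,dv$. Truncating the integral to $|v|\le M^\ve/M$ at the cost of $O(T^{-A})$, using the rapid decay of $g$, gives \eqref{eqn: H^+=int}.

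For the size statements I would write $\cos(x\cosh v)=\tfrac12(e^{ix\cosh v}+e^{-ix\cosh v})$, so that, up to $O(T^{-A})$, $H^+(x)$ is a sum over $\pm$ of $\tfrac{MT}{2}\int_{|v|\le M^\ve/M}e^{i\Phi_\pm(v)}g(Mv)\,dv$ with $\Phi_\pm(v)=\pm x\cosh v+2Tv$ (recall $e(vT/\pi)=e^{2iTv}$). The weight $g(Mv)$ is inert at scale $1/M$, and $\Phi_\pm^{(j)}(v)\ll x$ for $j\ge 2$ while $\Phi_\pm'(v)=\pm x\sinh v+2T$. If $x\ll MT^{1-\ve}$ then $|x\sinh v|\ll xM^\ve/M\ll T^{1-\ve/2}$ on the range of integration (here using $M\asymp T^{1/2}$), so $\Phi_\pm'(v)\gg T$ has no zero and the non-stationary case of Lemma \ref{lemma: SP} makes each integral $\ll T^{-A}$; this is the assertion that $H^+(x)\ll T^{-A}$ unless $x\gg MT^{1-\ve}$. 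If instead $x\gg MT^{1-\ve}$, then $\Phi_\pm$ has a stationary point $v_0$ with $\sinh v_0=\mp 2T/x$, lying in the effective support of $g(Mv)$ (and where it does not, the weight is already negligible and the non-stationary estimate again applies), with $\Phi_\pm''(v_0)=\pm x\cosh v_0\asymp x$; applying the stationary case of Lemma \ref{lemma: SP} with $Y\asymp x/M^2$ bounds each integral by $\ll x^{-1/2}$ times an inert quantity, whence $H^+(x)\ll TMx^{-1/2}$. The one genuinely delicate step is the interchange of integration at the start: the defining $t$-integral for $H^+(x)$ converges only through the cancellation between the exponential growth of $J_{2it}(x)$ in $t$ and the $\cosh(\pi t)$ in the denominator, and the Hankel integrals $\int e^{\pm ix\cosh\xi}e^{-2it\xi}\,d\xi$ over the real line are merely conditionally convergent, so one has to first deform the $\xi$-contour into the strip of holomorphy of the eventual Fourier transform (of width governed by the exponential decay of $\operatorname{sech}$), apply Fubini there, and deform back; everything downstream is then a routine stationary-phase computation.
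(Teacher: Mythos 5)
Your proposal is correct and follows essentially the same route as the paper, which simply cites Young \cite[Lemma 7.1]{young2014weyl} for the integral representation \eqref{eqn: H^+=int} and the negligibility range, and then obtains the bound $H^+(x)\ll TMx^{-1/2}$ from that representation together with the stationary phase Lemma \ref{lemma: SP}; your reconstruction of Young's argument (Mehler--Sonine/Hankel representation, cancellation of $e^{\pm\pi t}$ against $\cosh(\pi t)$, Fourier transform of the $\operatorname{sech}$ bump giving the $MT\,g(Mv)$ weight, followed by non-stationary/stationary phase in $v$) is the intended proof. The contour-deformation caveat you flag to justify Fubini is exactly the delicate point in Young's proof, and your treatment of it is adequate.
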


\begin{proof}
  See Young~\cite[Lemma 7.1]{young2014weyl}.
  And the upper bound for $H^+$ when $x\gg MT^{1-\ve}$ comes from \eqref{eqn: H^+=int} and Lemma \ref{lemma: SP}.
\end{proof}

\begin{lemma}\label{lemma: H-}
  Let $H^-$ be given by \eqref{eqn: H}.
  There exists a function $g$ depending on $T$ and $M$ satisfying $g^{(j)}(y)\ll_{j,A} (1+|y|)^{-A}$,
  so that
  \begin{equation}\label{eqn: H^-=}
    H^-(x) = MT\int_{|v|\leq \frac{M^\ve}{M}}\cos(x\sinh(v))e\left(\frac{vT}{\pi}\right)g(Mv)dv + O(T^{-A}).
  \end{equation}
  Furthermore, $H^-(x)\ll (x+T)^{-A}$ unless $x\asymp T$.
  And if $x\asymp T$, then we have $H^{-}(x)\ll T^{1+\ve}$.
\end{lemma}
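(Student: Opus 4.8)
The plan is to mirror the proof of Lemma~\ref{lemma: H+} (i.e.\ Young~\cite{young2014weyl}), replacing the $J$-Bessel integral representation used there by the one for the $K$-Bessel function of purely imaginary order: for real $t$ and $x>0$,
\[
  K_{2it}(x) = \frac{1}{\cosh(\pi t)}\int_{0}^{\infty}\cos(x\sinh\xi)\cos(2t\xi)\,d\xi
             = \frac{1}{2\cosh(\pi t)}\int_{-\infty}^{\infty}\cos(x\sinh\xi)\,e^{2it\xi}\,d\xi ,
\]
the $\cosh(\pi t)$ in the denominator being exactly what cancels against the $\sinh(\pi t)$ in \eqref{eqn: H}. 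First I would plug this into \eqref{eqn: H} and perform the $t$-integration before the $\xi$-integration (the latter is only conditionally convergent at $+\infty$, but becomes absolutely convergent after one integration by parts in $\xi$, so this order is legitimate), obtaining
\[
  H^-(x) = \frac{2}{\pi}\int_{-\infty}^{\infty}\cos(x\sinh\xi)\,\Phi(\xi)\,d\xi,
  \qquad \Phi(\xi):=\int_{-\infty}^{\infty}\tanh(\pi t)\,t\,h(t)\,e^{2it\xi}\,dt,
\]
with $\Phi$ a Schwartz function since $t\mapsto\tanh(\pi t)\,t\,h(t)$ is. Because $h(t)$ is negligibly small for $|t|\leq T/2$, replacing $\tanh(\pi t)$ by $\operatorname{sgn}(t)$ changes $\Phi$ by $O(T^{-A})$; the substitutions $t=\pm T+Mw$ on $\pm t>0$ then present $\Phi(\xi)$, up to $O(T^{-A})$, as $MT\,e^{\pm 2iT\xi}$ times the Fourier transform of $(1\pm\tfrac{M}{T}w)\operatorname{sech}(w)$ at $2M\xi$. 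Folding the $-$ term into the $+$ term via $\xi\mapsto-\xi$, and using the rapid decay of that Fourier transform to truncate to $|\xi|\leq M^{\ve}/M$, one lands on exactly \eqref{eqn: H^-=} with $g$ a Schwartz function depending on $T,M$ (only through $M/T\ll1$, so with uniform bounds).

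For the estimates: the trivial bound $\int_{|v|\leq M^{\ve}/M}|g(Mv)|\,dv\ll M^{\ve}/M$ applied to \eqref{eqn: H^-=} gives $H^-(x)\ll T^{1+\ve}$ for \emph{every} $x$, in particular when $x\asymp T$. For the rapid decay when $x\not\asymp T$, write $\cos(x\sinh v)=\tfrac12(e^{ix\sinh v}+e^{-ix\sinh v})$ and study the phases $\phi_{\pm}(v)=\pm x\sinh v+2Tv$. On $|v|\leq M^{\ve}/M$ one has $\cosh v=1+O(M^{2\ve-2})$, so $\phi_{\pm}'(v)=\pm x\cosh v+2T$; this is $\geq x+2T$ for the $+$ sign and $\gg x+T$ in absolute value for the $-$ sign whenever $x\leq T/C$ or $x\geq CT$ (an absolute constant $C$), the only range where $\phi_{-}'$ can vanish being $x\asymp T$ (in fact $x\asymp 2T$). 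After rescaling $u=Mv$ the weight lives on $|u|\leq M^{\ve}$ with $\phi^{(j)}\ll\tfrac{x+T}{M}$ for $j\geq1$ and $|\phi'|\gg\tfrac{x+T}{M}\gg\tfrac{T}{M}=T^{1/2}$, so repeated integration by parts (equivalently Lemma~\ref{lemma: SP}(1), after a smooth truncation of the Schwartz weight $g$) yields decay $\big(\tfrac{x+T}{M}\big)^{-A}$; restoring the prefactor $MT$ and the Jacobian $M^{-1}$ gives $H^-(x)\ll T\big(\tfrac{x+T}{M}\big)^{-A}+T^{-A}$, which, on choosing $A$ large in terms of the target exponent and using $M\asymp T^{1/2}$, $x+T\gg T$, becomes $H^-(x)\ll(x+T)^{-A}$.

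Since the mechanism is identical to the $H^{+}$ case, I expect the only real obstacle to be bookkeeping: keeping the conditionally convergent $\xi$-integral under control so that the interchange of integrations and the truncations are legitimate, and matching the hypotheses of Lemma~\ref{lemma: SP} --- handling the non-compactly supported weight $g$, and choosing the parameters in the rescaled variable so that the decay $\big(\tfrac{x+T}{M}\big)^{-A}$ can be upgraded to $(x+T)^{-A'}$ for arbitrarily large $A'$.
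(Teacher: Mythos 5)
Your proposal is correct and follows essentially the same route as the paper: the paper simply cites Young \cite[Lemma 7.2]{young2014weyl} for \eqref{eqn: H^-=} and the decay statement, and notes that the bound $H^-(x)\ll T^{1+\ve}$ for $x\asymp T$ is immediate from the integral representation; Young's proof is precisely the Mehler--Sonine representation $K_{2it}(x)=\frac{1}{2\cosh(\pi t)}\int_{\bbR}\cos(x\sinh\xi)e^{2it\xi}\,d\xi$ followed by the $t$-integration, the $\tanh\mapsto\operatorname{sgn}$ replacement, the substitution $t=\pm T+Mw$, and non-stationary phase in $\xi$, exactly as you describe.
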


\begin{proof}
  See Young~\cite[Lemma 7.2]{young2014weyl}.
  And the upper bound for $H^-$ when $x\asymp T$ is an easy consequence of \eqref{eqn: H^-=}.
\end{proof}

Define
\begin{equation}\label{eqn: tilde(Psi)}
  \check{\Psi}_\sigma^\pm(x;D) := e\left(\mp\sigma\frac{x}{D^2}\right)\Psi_\sigma^\pm(x;D),
\end{equation}
and
\begin{equation}\label{eqn: Upsilon}
  \Upsilon(t)=\Upsilon_{X,D}(t):=\int_{0}^{\infty}w\left(\frac{x}{X}\right)\check{\Psi}_\sigma^\pm(x;D)x^{it}\frac{dx}{x},
\end{equation}
where $w$ is a fixed smooth function supported on $[1/2,3]$, and with value $1$ on $[1,2]$.
(Note that this $w$ differs from that of \eqref{eqn: psi_s}.)
Now together with the Mellin technique, we can prove the following lemma, which will help us to separate the variables.

\begin{lemma}\label{lemma: Psi}
  Let $x\asymp X$ with $X \gg (qT)^{-B}$ for some large but fixed $B$.
  \begin{enumerate}
    \item[(i)] We have $\check{\Psi}_\sigma^\pm(x;D)\ll T^{-A}$ unless 
        \begin{equation}\label{eqn: D}
          \left\{
            \begin{array}{ll}
                D\gg TM^{1-\ve}, & \textrm{if } \sigma=1, \\
                D\asymp T, & \textrm{if } \sigma=-1.
            \end{array}
          \right.
        \end{equation}
    \item[(ii)] 
        When $X\ll T^\ve$, if $D$ satisfies \eqref{eqn: D}, then we have
            \begin{equation}\label{eqn: dPsi}
              x^k\frac{d^k}{dx^k}\check{\Psi}_\sigma^\pm(x;D)\ll_{k,\ve} \left\{\begin{array}{ll}
                                               X^{2/3}TMD^{-1/2}T^{\ve}, & \textrm{if } \sigma=1, \\
                                               X^{2/3}T^{1+\ve}, & \textrm{if } \sigma=-1.
                                             \end{array}\right.
            \end{equation}
            Note that here the $\ve$ on the right hand side may depend on $k$.
            Furthermore, if $x\in[X,2X]$, we have
            \begin{equation}\label{eqn: Psi x<}
              \check{\Psi}_\sigma^\pm(x;D) = \frac{1}{2\pi}\int_{-T^{\epsilon}}^{T^{\epsilon}} \Upsilon(t)x^{-it}dt + O(T^{-A}).
            \end{equation}
            And for $|t|\ll T^{\epsilon}$, we have
            \begin{equation}\label{eqn: Upsilon<<}
              \Upsilon(t) \ll \left\{\begin{array}{ll}
                                               X^{2/3}TMD^{-1/2}T^{\ve}, & \textrm{if } \sigma=1, \\
                                               X^{2/3}T^{1+\ve}, & \textrm{if } \sigma=-1.
                                             \end{array}\right.
            \end{equation}
    \item[(iii)] When $X\gg T^\ve$, we have
        \begin{equation}\label{eqn: Psi_s^pm}
            \check{\Psi}_\sigma^\pm(x;D) = \sum_{\ell=1}^{K} \gamma_\ell \frac{x^{5/6}M}{x^{\ell/3}}
            L(x;D) + O(T^{-A}),
        \end{equation}
        where $L$ is a function that takes the form
        \begin{equation}\label{eqn: L}
            L(x;D) = \int_{|t|\ll U} \lambda_{X,T}(t) \left(\frac{x}{D^2}\right)^{it}dt
        \end{equation}
        with the following parameters.
        Here $\lambda_{X,T}(t)\ll1$ does not depend on $x$ and $D$.
        If $\sigma=1$, then $U=T^2/D$; furthermore, $L$ vanishes unless
        \begin{equation}\label{eqn: X&D+}
            X\asymp D^3, \quad \textrm{and} \quad D\gg MT^{1-\ve}.
        \end{equation}
        If $\sigma=-1$, then $U=T^{2/3}X^{1/3}D^{-2/3}$; in addition, $L$ vanishes unless
        \begin{equation}\label{eqn: X&D-}
            X \ll D^3 M^{\ve-3}, \quad \textrm{and} \quad D\asymp T.
        \end{equation}
  \end{enumerate}
\end{lemma}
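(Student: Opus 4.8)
The plan is to follow the method of Young \cite[\S8]{young2014weyl}: one works from the Mellin--Barnes definition \eqref{eqn: Psi} of $\Psi_\sigma^\pm(x;D)$, in which the test function enters only through the Mellin transform $\widetilde{\psi_\sigma}(1-s;D)$ of $\psi_\sigma(y;D)=w(y)y^{-u}H^\pm(4\pi\sqrt{y}D)$, inserts the oscillatory-integral representations of $H^\pm$ from Lemmas \ref{lemma: H+} and \ref{lemma: H-}, and resolves the resulting multiple integrals by repeated stationary phase (Lemma \ref{lemma: SP}); in the large-$X$ range one first replaces $\Psi^\pm$ by its asymptotic expansion from Lemma \ref{lemma: Psi=M+O}. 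Part (i) is immediate: since $w$ is supported on $[1,2]$, the function $\psi_\sigma(\,\cdot\,;D)$ and all of its $y$-derivatives depend on $D$ only through $H^\pm(4\pi\sqrt{y}D)$ with $y\asymp1$, and by Lemmas \ref{lemma: H+} and \ref{lemma: H-} this is $O(T^{-A})$ unless $D$ satisfies \eqref{eqn: D}; hence $\widetilde{\psi_\sigma}(1-s;D)$, and therefore $\Psi_\sigma^\pm(x;D)$ and $\check\Psi_\sigma^\pm(x;D)$, is negligible outside \eqref{eqn: D}. The unitary factor $e(\mp\sigma x/D^2)$ is irrelevant here.

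For part (ii), with $X\ll T^\ve$, I would move the contour in \eqref{eqn: Psi} to $\Re(s)=1/3$; no pole is crossed since the integrand is holomorphic for $\Re(s)>7/32$, and there, by Stirling's formula together with $\alpha_1+\alpha_2+\alpha_3=0$, the product of Gamma factors is $\ll(1+|s|)^{-1/2}$. Using $H^+(z)\ll TMz^{-1/2}$ and $H^-(z)\ll T^{1+\ve}$ for $z\asymp D$ (Lemmas \ref{lemma: H+}, \ref{lemma: H-}), and the fact that $H^\pm(4\pi\sqrt{y}D)$ oscillates in $y$ only on the scale dictated by $D$, one finds that $\widetilde{\psi_\sigma}(1-s;D)$ is small outside $|\Im s|\asymp D$ and of the expected size there; combined with the factor $x^{1-s}$ this gives \eqref{eqn: dPsi}, the factor $e(\mp\sigma x/D^2)$ being harmless since $x/D^2\ll T^{-2+\ve}$. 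For the $x$-derivatives the key point is that on $\Re(s)=1/3$ with $|\Im s|\asymp D$ the combined phase of $x^{-s}$, the Gamma factors and $\widetilde{\psi_\sigma}$ has no stationary point when $X$ is small, so integration by parts in $s$ absorbs the polynomial growth produced by differentiating $x^{1-s}$. Finally \eqref{eqn: Psi x<} is Mellin inversion restricted to the dyadic block $x\asymp X$ (after $x\mapsto Xe^\theta$), and the truncation to $|t|\le T^\ve$ and the bound \eqref{eqn: Upsilon<<} follow by integrating by parts in $t$ using \eqref{eqn: dPsi}.

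Part (iii), with $X\gg T^\ve$, is the substantive case. Since $xN=x\gg T^\ve\gg1$, Lemma \ref{lemma: Psi=M+O} writes $\Psi_\sigma^\pm(x;D)$ as $x\int_0^\infty\psi_\sigma(y;D)\sum_{\ell}\gamma_\ell(xy)^{-\ell/3}e(\pm3(xy)^{1/3})\,dy$ up to a negligible error; substituting the representations of $H^\pm$ yields, for each $\ell$, a double integral in $(y,v)$ whose phase has the shape $\pm3(xy)^{1/3}\mp2\sqrt{y}D\,(\cosh v\text{ or }\sinh v)+vT/\pi$. Applying Lemma \ref{lemma: SP} first in $y$, a stationary point lies in $y\asymp1$ only when the two main terms have opposite sign and comparable size, which forces $X\asymp D^3$ when $\sigma=1$ (because $\cosh v\approx1$) and $X\ll D^3M^{\ve-3}$ when $\sigma=-1$ (because $|\sinh v|\lesssim M^\ve/M$), and the stationary value is $x/(D\cosh v)^2$, resp.\ $-x/(D\sinh v)^2$, plus $vT/\pi$. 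When $\sigma=1$, expanding $x/(D\cosh v)^2=x/D^2-xv^2/D^2+\cdots$, the term $x/D^2$ is precisely what $e(-x/D^2)$ in $\check\Psi_\sigma^\pm$ removes, and the surviving $v$-integral has phase $-xv^2/D^2+vT/\pi+\cdots$ with stationary point $v_0\asymp T/D$, which lies in $|v|\lesssim M^\ve/M$ exactly when $D\gg MT^{1-\ve}$; it contributes a residual phase of size $\asymp T^2D^2/x\asymp T^2/D$, oscillating in $\log(x/D^2)$ with frequency $\ll U=T^2/D$, so that Mellin expansion in $\log(x/D^2)$ gives $L(x;D)=\int_{|t|\ll U}\lambda_{X,T}(t)(x/D^2)^{it}\,dt$, while the amplitudes (each stationary-phase evaluation contributing $D^{-1/2}$) combine with the prefactor $x\cdot MT\cdot x^{-\ell/3}$ to give $\gamma_\ell x^{5/6-\ell/3}M$ times an inert function, the power $x^{5/6}$ being exactly the normalization that makes $\lambda_{X,T}\ll1$. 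When $\sigma=-1$ the support of $H^-$ forces $D\asymp T$, the $y$-stationary phase already pins the relevant $v$ down to $|v|\asymp x^{1/3}/D$, no longer concentrated near the origin, so that $e(x/D^2)$ is merely a harmless near-constant, and the same two-step stationary-phase argument produces $U=T^{2/3}X^{1/3}D^{-2/3}$.

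The main obstacle is the stationary-phase bookkeeping in part (iii): locating the stationary points in $y$ and in $v$, verifying that they fall inside the ranges of integration — which is exactly where the conditions \eqref{eqn: X&D+}, \eqref{eqn: X&D-} and the admissible ranges of $D$ come from — checking that the leading phase extracted in the case $\sigma=1$ is precisely $x/D^2$, propagating the residual oscillation scale $U$ consistently through both evaluations, and confirming that the surviving amplitudes are inert in the sense of \eqref{eqn: inert}. The case $\sigma=-1$ is the more delicate one, since the $\sinh v$ (in place of $\cosh v$) couples the $y$- and $v$-integrals more tightly and the stationary point in $v$ is no longer close to the origin.
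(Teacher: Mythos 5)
Your proposal follows essentially the same route as the paper: part (i) from the support/decay of $H^{\pm}$ in Lemmas \ref{lemma: H+}--\ref{lemma: H-}, part (iii) from Lemma \ref{lemma: Psi=M+O} followed by stationary phase in $y$ then in $v$ and a Mellin expansion of the residual oscillation (this last step is Young's Lemma 8.2, which the paper simply cites), and part (ii) from derivative bounds plus Mellin inversion and integration by parts. The only cosmetic difference is in part (ii), where the paper invokes Blomer's Lemma 7 for the bound $x^k\frac{d^k}{dx^k}\check{\Psi}_\sigma^{\pm}(x;D)\ll_k (1+x^{1/3})^k x^{2/3}\|\psi_\sigma\|_\infty$ as a black box, whereas you re-derive the equivalent estimate by a contour shift to $\Re(s)=1/3$ and Stirling; both yield the same conclusion.
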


\begin{proof}
  We first handle the case $X\gg T^\ve$.
  By Blomer~\cite[Lemma 6]{blomer2012subconvexity}, we have
  \begin{equation}\label{eqn: Psi= Blomer}
    \Psi_\sigma^{\pm}(x;D) = x \int_{0}^{\infty} \psi_\sigma(y;D) \sum_{\ell=1}^{K}
    \frac{\gamma_\ell}{(xy)^{\ell/3}}e\left(\pm3(xy)^{1/3}\right)dy + O(T^{-A}),
  \end{equation}
  for some constants $\gamma_\ell$ depending only on $\alpha_1,\alpha_2,\alpha_3$.
  Recall the definition of $\psi_\sigma(y;D)$ \eqref{eqn: psi_s}.
  By Lemmas \ref{lemma: H+} and \ref{lemma: H-}, we arrive at
  \[
    \sum_{\ell=1}^{K} \frac{xMT\gamma_\ell}{x^{\ell/3}}
    \int_{|v|\ll \frac{M^{\ve}}{M}} g(Mv)e\left(\frac{vT}{\pi}\right)
    \left(\int_{0}^{\infty} w(y)y^{-u} e\left(2\sqrt{y}D\phi_\sigma(v)
    \pm 3(xy)^{1/3}\right) y^{-\ell/3} dy \right) dv,
  \]
  where $\phi_\sigma(v)=\pm\cosh(v)$ for $\sigma=1$,
  and $\phi_\sigma(v)=\pm\sinh(v)$ for $\sigma=-1$.
  The $\pm$ in $\phi_\sigma(v)$ is different from that of $\Psi_\sigma^{\pm}$.

  The $y$-integral can be analyzed by stationary phase. By Lemma \ref{lemma: SP}, we know
  the above integral is small unless a stationary point exists, which implies
  \begin{equation}\label{eqn: phi(v)}
    |\phi_\sigma(v)|=\pm \phi_\sigma(v) \asymp  X^{1/3}/D.
  \end{equation}
  In addition, since $|v|\ll M^\ve/M$, we have $X\asymp D^3$ if $\sigma=1$,
  and $X\ll D^3M^{\ve-3}$ if $\sigma=-1$.

  At this point, we can restrict the size of $X$. Recall that in our application,
  $D=\sqrt{\delta N}/c$ and $N$ satisfying \eqref{eqn: N}, we have $D\ll (qT)^2$.
  Hence we can assume that $X\ll (qT)^6$.
  Otherwise, we get $\Psi_\sigma^{\pm}(x;D)\ll T^{-A}$.

  Now we consider the range of $D$ that makes $\Psi_\sigma^\pm(x;D)$ non-negligible.
  At first, by the above argument, we can restrict ourself to the case $(qT)^{-B}\ll X\ll (qT)^B$.
  By \eqref{eqn: Psi} and Parseval's formula, we have
  \begin{equation}\label{eqn: Psi Parseval}
    \Psi_\sigma^\pm(x;D) = x\int_{0}^{\infty} \psi_\sigma(y;D) g^{\pm}(\pi^3 xy)dy,
  \end{equation}
  where
  \[
    g^{\pm}(y) = \frac{1}{2\pi i} \int_{(c)} G^{\pm}(s)y^{-s}ds
  \]
  is the inverse Mellin transform of
  \[
    G^{\pm}(s) = \prod_{j=1}^{3} \frac{\Gamma\left(\frac{s+\alpha_j}{2}\right)}{\Gamma\left(\frac{1-s-\alpha_j}{2}\right)}
                        \pm \frac{1}{i} \prod_{j=1}^{3}                           \frac{\Gamma\left(\frac{1+s+\alpha_j}{2}\right)}{\Gamma\left(\frac{2-s-\alpha_j}{2}\right)}.
  \]
  Now, by Lemmas \ref{lemma: H+} and \ref{lemma: H-}, we can assume that
  $D\gg MT^{1-\ve}$ if $\sigma=1$, and $D\asymp T$ if $\sigma=-1$.
  Otherwise, we have $\Psi_\sigma^{\pm}(x;D)\ll T^{-A}$.
  Thus we give the proof of part (i).

  Assuming \eqref{eqn: phi(v)}, the stationary point is at $y_0=x^2(\phi_\sigma(v)D)^{-6}\asymp1$,
  so, by Lemma \ref{lemma: SP}, we have
  \[
    \begin{split}
      \int_{0}^{\infty} w(y)y^{-u} e\left(2\sqrt{y}D\phi_\sigma(v) \pm 3(xy)^{1/3}\right) y^{-\ell/3} dy  = x^{-1/6} e\left(\mp \frac{x}{\phi_\sigma^2(v)D^2}\right) w_1(v) + O(T^{-A}),
    \end{split}
  \]
  where $w_1$ is inert in terms of $v$, and $w_1$ has support on \eqref{eqn: phi(v)}.
  The fact that $w_1$ is inert in terms of $v$ needs some discussion.
  We naturally obtain an inert function in terms of $\phi_\sigma(v)$,
  but since $\phi_\sigma(v)$ has bounded derivatives for $|v|\leq1$,
  we do get an inert function of $v$.
  Hence, to bound $\Psi_\sigma^{\pm}(x;D)$, we only need to estimate
  \begin{equation}
      \sum_{\ell=1}^{K} \frac{x^{5/6}MT\gamma_\ell}{x^{\ell/3}} e\left(\mp\sigma\frac{x}{D^2}\right) \Phi_\sigma\left(\frac{x}{D^2}\right),
  \end{equation}
  where
  \begin{equation}\label{eqn: Phi}
     \Phi_\sigma(y) = \int_{|v|\ll \frac{M^{\ve}}{M}} g(Mv)e\left(\frac{vT}{\pi}\right) e\left(\pm y(\sigma-\phi_\sigma^{-2}(v))\right) w_1(v) dv.
  \end{equation}
  Finally, we shall use the Mellin technique to analyze $\Phi_\sigma(y)$.
  By the same proof as Young~\cite[Lemma 8.2]{young2014weyl} did, for
  \[
    y\asymp Y\asymp X/D^2 \asymp \left\{\begin{array}{ll}
               X^{1/3}, & \textrm{if } \sigma=1, \\
               XT^{-2}, & \textrm{if } \sigma=-1,
             \end{array}\right.
  \]
  we have
  \begin{equation}\label{eqn: Phi=}
      \Phi_\sigma(y) = \frac{1}{T} \int_{|t|\ll U} \lambda_{Y,T}(t)y^{it}dt + O(T^{-A}),
  \end{equation}
  where $\lambda_{Y,T}$ and $U$ depend on $Y,T$. Precisely, we have $\lambda_{Y,T}(t)\ll 1$, and
  \begin{equation}\label{eqn: U}
      \left\{\begin{array}{ll}
               U=T^2/Y, & \textrm{if } \sigma=1, \\
               U=Y^{1/3}T^{2/3}, & \textrm{if } \sigma=-1.
             \end{array}\right.
  \end{equation}
  (Note that the assumption $Y\gg1$ in Young~\cite[Lemma 8.2]{young2014weyl} is not used.
  Here we just need $Y/|v_0|^2\gg T^{\ve}$, where $|v_0|=x^{1/3}/D$ in the proof.
  And we can derive this from $Y/|v_0|^2 \asymp X/(D|v_0|)^2\asymp X^{1/3}\gg T^{\ve}$.)
  Note that in either case, we have $U\gg T^\ve$.
  Now, by \eqref{eqn: phi(v)}, we have $Y\asymp D$ if $\sigma=1$.
  This proves part (iii).

  Finally, we deal with the case $X\ll T^\ve$.
  By Blomer~\cite[Lemma 7]{blomer2012subconvexity}, for $D$ satisfying \eqref{eqn: D}, we have
  \begin{equation}\label{eqn: blomer Psi}
    x^k\frac{d^k}{dx^k}\check{\Psi}_\sigma^{\pm}(x;D) \ll_k (1+x^{1/3})^k x^{2/3}\|\psi_\sigma\|_\infty.
  \end{equation}
  Now, by \eqref{eqn: psi_s} and Lemmas \ref{lemma: H+} and \ref{lemma: H-},
  we prove the upper bound \eqref{eqn: dPsi}.
  Next, we want to use the Mellin technique to separate the variables.
  Recall that
  \[
    \Upsilon(t)=\int_{0}^{\infty}w\left(\frac{x}{X}\right)\check{\Psi}_\sigma^\pm(x;D)x^{it}\frac{dx}{x}.
  \]
  Note that for $|t|\gg T^\epsilon$, (taking $\epsilon>2\ve$), we have $t/x \gg T^\ve$.
  So using integral by parts many times, for $|t|\gg T^\epsilon$, we have $\Upsilon(t)\ll (tT)^{-A}$.
  By the Mellin inversion, for $x\in[X,2X]$, we have
  \[
      \check{\Psi}_\sigma^\pm(x;D)
    = \frac{1}{2\pi}\int_{-\infty}^{\infty} \Upsilon(t) x^{-it}dt
    = \frac{1}{2\pi}\int_{-T^{\epsilon}}^{T^{\epsilon}} \Upsilon(t)x^{-it}dt + O(T^{-A}).
  \]
  And for $|t|\ll T^{\epsilon}$, the upper bound \eqref{eqn: Upsilon<<} of $\Upsilon(t)$ is a consequence of
  \eqref{eqn: Upsilon} and \eqref{eqn: blomer Psi}.
  Thus we prove part (ii). This finishes the proof of the lemma.
\end{proof}

Lemma \ref{lemma: Psi} is good enough to give a nice bound for
the terms related to the $K$-Bessel function. However, we don't
know how to apply both the large sieve inequalities and a second use
of Voronoi formula when we want to bound the terms related
to the $J$-Bessel function. So, on the one hand, in the following
sections, we will get a bound without using the Voronoi formula
twice. This result is good in the $q$-aspect and not too bad in the $t$-aspect.
And then, on the other hand, in \S \ref{sec: Psi II}, we will use another
method to deal with the integral transforms that appear on the
right hand side of the Voronoi formula. This will be done by following
Blomer~\cite[\S3]{blomer2012subconvexity}, Li~\cite[\S4]{li2011bounds},
and McKee--Sun--Ye~\cite[\S6]{mckee2015improved}.
By doing this, we will obtain a bound which is good in the $t$-aspect,
and not too bad in the $q$-aspect. Then combining these two bounds,
one can get a hybrid subconvexity bound.

\section{Applying the large sieve} \label{sec: large_sieve}

Let
\begin{equation}\label{eqn: H(w;q)}
  H(w;q) = \sum_{u,v(\mod q)} \chi_q(uv(u+1)(v+1))e_q((uv-1)w).
\end{equation}
By Conrey--Iwaniec~\cite[Eq. (11.7)]{conrey2000cubic}, we have
\begin{equation}\label{eqn: H to H^*}
  H(w;q) = \sum_{q_1q_2=q} \mu(q_1)\chi_{q_1}(-1)H^*(\overline{q_1}w;q_2),
\end{equation}
where
\begin{equation}\label{eqn: H^*}
  H^*(w;q) = \sum_{\substack{u,v(\mod q)\\(uv-1,q)=1}} \chi_q(uv(u+1)(v+1))e_q((uv-1)w),
\end{equation}
and from~\cite[Eq. (11.9)]{conrey2000cubic}, we have
\begin{equation}\label{eqn: H^*=}
  H^*(w;q) = \frac{1}{\varphi(q)}\sum_{\psi(\mod q)} \tau(\bar{\psi})g(\chi,\psi)\psi(w),
\end{equation}
where $\tau(\psi)$ is the Gauss sum, and $g(\chi,\psi)\ll q^{1+\ve}$.

We first recall the following hybrid large sieve.

\begin{lemma}\label{lemma: HLS}
  Suppose $U\geq1$, and let $a_n$ be a sequence of complex numbers. Then
  \begin{equation}\label{eqn: HLS}
    \int_{-U}^{U} \sum_{\psi(\mod q)} \bigg|\sum_{n\leq N}a_n\psi(n)n^{it}\bigg|^2dt \ll (qU+N)\sum_{n\leq N}|a_n|^2.
  \end{equation}
\end{lemma}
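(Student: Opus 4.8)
The plan is to deduce the inequality from two classical ingredients: Gallagher's lemma, which turns the $t$--integral of each character's Dirichlet polynomial into an average of short sums, and the (entirely elementary) multiplicative large sieve for a single modulus, which follows at once from the orthogonality relations for Dirichlet characters.

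First I would fix $\psi\bmod q$ and write $F_\psi(t)=\sum_{n\le N}a_n\psi(n)n^{it}=\sum_{n\le N}a_n\psi(n)\,e\!\left(\tfrac{t\log n}{2\pi}\right)$, a trigonometric polynomial with frequencies $\nu_n=\tfrac{\log n}{2\pi}$. Gallagher's lemma (see e.g.\ Montgomery's book) gives, for each $\psi$,
\[
\int_{-U}^{U}|F_\psi(t)|^2\,dt \ll U\int_{-\infty}^{\infty}\Big|\sum_{x<\frac{\log n}{2\pi}\le x+\frac{1}{2U}}a_n\psi(n)\Big|^2\,dx,
\]
so, summing over $\psi\bmod q$,
\[
\int_{-U}^{U}\sum_{\psi\bmod q}|F_\psi(t)|^2\,dt \ll U\int_{-\infty}^{\infty}\sum_{\psi\bmod q}\Big|\sum_{n\in J(x)}a_n\psi(n)\Big|^2\,dx,
\]
where $J(x):=\big(e^{2\pi x},\,e^{2\pi(x+1/(2U))}\big]$; note that if $J(x)$ meets $[1,N]$ then $e^{2\pi x}<N$, whence $|J(x)|=e^{2\pi x}(e^{\pi/U}-1)\ll N$ because $U\ge1$ (and the inner sum is empty otherwise).

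Next I would bound the inner sum over $\psi\bmod q$ by orthogonality: expanding the square and using $\sum_{\psi\bmod q}\psi(m)\overline{\psi(n)}=\varphi(q)$ when $m\equiv n\pmod q$ and $(mn,q)=1$, and $0$ otherwise, we get
\[
\sum_{\psi\bmod q}\Big|\sum_{n\in J(x)}a_n\psi(n)\Big|^2=\varphi(q)\sum_{\substack{m,n\in J(x)\\ m\equiv n\,(q),\,(mn,q)=1}}a_m\overline{a_n}\ll \varphi(q)\Big(1+\tfrac{|J(x)|}{q}\Big)\sum_{n\in J(x)}|a_n|^2\ll (q+N)\sum_{n\in J(x)}|a_n|^2,
\]
the middle step being the triangle inequality together with the fact that each residue class $\bmod\ q$ meets the interval $J(x)$ in at most $1+|J(x)|/q$ integers (and $\varphi(q)\le q$, $\varphi(q)/q\le1$).

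Finally I would integrate back. Since $\int_{\mathbb{R}}\mathbf{1}[n\in J(x)]\,dx=\tfrac{1}{2U}$, Fubini gives $\int_{-\infty}^{\infty}(q+N)\sum_{n\in J(x)}|a_n|^2\,dx=\tfrac{q+N}{2U}\sum_{n\le N}|a_n|^2$, and multiplying by the outer factor $U$ yields $\ll(q+N)\sum_{n\le N}|a_n|^2\ll(qU+N)\sum_{n\le N}|a_n|^2$, using $U\ge1$. The only non-routine input is the invocation of Gallagher's lemma; everything after that is bookkeeping with the orthogonality relation and Fubini, and the length estimate for $J(x)$ is precisely where the hypothesis $U\ge1$ is used. (Alternatively one may simply cite the hybrid large sieve of Gallagher, or the corresponding statement in Iwaniec--Kowalski.)
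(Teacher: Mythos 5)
Your overall strategy (Gallagher's $L^2$ lemma to convert the $t$-integral into an average of short sums, then character orthogonality as a single-modulus large sieve) is a standard and legitimate route to this inequality; the paper itself offers no argument at all, simply citing Gallagher's Theorem 2. However, as written your proof contains a genuine error: the form of Gallagher's lemma you invoke is off by a factor of $U$. The correct statement, for $S(t)=\sum_\nu c_\nu e(\nu t)$ and intervals of length $\delta=1/(2U)$, is
\begin{equation*}
\int_{-U}^{U}|S(t)|^2\,dt \ll U^{2}\int_{-\infty}^{\infty}\Big|\sum_{x<\nu\le x+\frac{1}{2U}}c_\nu\Big|^2\,dx,
\end{equation*}
with prefactor $U^2$, not $U$ (this comes from $\big|\tfrac{\sin(\pi\delta t)}{\pi t}\big|\gg\delta$ on $|t|\le U$ in the Fourier-analytic proof, i.e.\ a factor $\delta^{-2}=4U^2$). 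Your version with prefactor $U$ is false: testing it on a single frequency gives $2U\ll 1/2$. The downstream symptom is that your final bound comes out as $(q+N)\sum|a_n|^2$, which is itself false — take $a_n$ supported at a single $n_0$; then the left side of \eqref{eqn: HLS} equals $2U\varphi(q)|a_{n_0}|^2$, which exceeds $C(q+N)|a_{n_0}|^2$ for large $U$. So the argument as written cannot be correct.

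The fix is easy and stays entirely within your framework, because you have a compensating loss elsewhere: you bounded $|J(x)|=e^{2\pi x}(e^{\pi/U}-1)\ll N$, but since $e^{\pi/U}-1\asymp 1/U$ for $U\ge1$, in fact $|J(x)|\ll N/U$. With the correct prefactor $U^2$ and this sharper length bound, orthogonality gives $\varphi(q)\big(1+\tfrac{|J(x)|}{q}\big)\ll q+N/U$ per interval, and Fubini then yields $U^{2}\cdot\big(q+\tfrac{N}{U}\big)\cdot\tfrac{1}{2U}\asymp qU+N$, exactly as required. In other words, your two inaccuracies (a missing $U$ in Gallagher and a spurious $U$ in the interval length) happen to cancel numerically, but each step must be corrected for the proof to be valid; once both are, the argument is complete and the hypothesis $U\ge1$ is used precisely where you said, in the estimate $e^{\pi/U}-1\asymp 1/U$.
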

\begin{proof}
  See Gallagher~\cite[Theorem 2]{gallagher1970large}.
\end{proof}

\begin{lemma}\label{lemma: average H}
  Suppose that $q$ is squarefree. Let $a_1,a_2,b_1,b_2\in\bbZ$, $c\in\bbZ$, such that $(b_1b_2c,q)=(a_1a_2,c)=1$.
  Let $N_2,D_2\geq1$. Suppose $\alpha_d,\beta_n\in\bbC$ with $|\alpha_d|\leq1$ for $1\leq d\leq D_2$,
  $1\leq n\leq N_2$, and $U\geq1$.
  Then for any $\ve>0$, we have
  \begin{equation}\label{eqn: average H}
    \begin{split}
         & \int_{|t|\ll U} \bigg|\sum_{\substack{n\asymp N_2,d\asymp D_2\\(nd,qc)=1}} \alpha_d\beta_ne(a_1\overline{a_2}\bar{d}n/c)H(b_1\overline{b_2}\bar{d}n;q)\left(\frac{n}{d}\right)^{it}\bigg|dt \\
         & \hskip 150pt \ll \frac{q^{1/2+\ve}}{c^{1/2-\ve}} (qcU+D_2)^{1/2} D_2^{1/2} (qcU+N_2)^{1/2}\|\beta\|,
    \end{split}
  \end{equation}
  where as usual $\|\beta\|=\left(\sum|\beta_n|^2\right)^{1/2}$.
\end{lemma}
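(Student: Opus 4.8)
The plan is to open up $H(b_1\overline{b_2}\bar dn;q)$ via \eqref{eqn: H to H^*} and then \eqref{eqn: H^*=}, so that the character sum becomes a linear combination over $q_1q_2=q$ and over Dirichlet characters $\psi \pmod{q_2}$ of the Gauss-sum coefficients $\tau(\bar\psi)g(\chi,\psi)$ times $\psi(\overline{q_1}b_1\overline{b_2}\bar dn)$. Since $g(\chi,\psi)\ll q^{1+\ve}$ and $|\tau(\bar\psi)|\le\sqrt{q_2}$, and $\varphi(q)\gg q^{1-\ve}$, after pulling these factors out of the $n,d$-sum we are left with estimating, for each divisor pair and each $\psi$,
\[
  \int_{|t|\ll U}\bigg|\sum_{\substack{n\asymp N_2,\,d\asymp D_2\\(nd,qc)=1}}\alpha_d\beta_n\,\psi(n)\overline{\psi(d)}\,e(a_1\overline{a_2}\bar dn/c)\,n^{it}d^{-it}\bigg|\,dt.
\]
The key structural observation is that $e(a_1\overline{a_2}\bar dn/c)$ couples $n$ and $d$ only through a sum over residues mod $c$: writing the additive character's dependence via complete additive characters mod $c$ is not needed; instead I would apply Cauchy--Schwarz in the $d$-variable (there are $\asymp D_2$ terms, $|\alpha_d|\le1$, giving a factor $D_2^{1/2}$), which replaces the absolute value by a double sum over $d$ and reduces matters to a sum $\sum_{d\asymp D_2}\big|\sum_{n\asymp N_2}\beta_n\psi(n)e(a_1\overline{a_2}\bar dn/c)n^{it}\big|^2$ inside the $t$-integral, after first peeling off the $\overline{\psi(d)}d^{-it}$ which has modulus $1$.

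Next I would dualize: the character $\psi(n)e(a_1\overline{a_2}\bar dn/c)$ is a multiplicative-times-additive character mod $qc$ (using $(qc,\cdot)=1$ on the support and $q$ squarefree so that $\chi_q$ lifts compatibly), hence equals a single Dirichlet-type character mod $qc$ of the form $\Psi_d(n)n^{it}$ where $\Psi_d$ ranges over $\varphi(qc)$ characters as $d$ varies but, more to the point, for each fixed $d$ it is one primitive-ish character mod (a divisor of) $qc$. The cleanest route is: enlarge the sum over the $\asymp D_2$ values of $d$ to a sum over all characters $\bmod\ qc$ (this is where squarefreeness of $q$ and $(a_1a_2,c)=1$ are used, to guarantee the map $d\mapsto \Psi_d$ is injective into the character group mod $qc$), so that
\[
  \sum_{d\asymp D_2}\int_{|t|\ll U}\bigg|\sum_{n\asymp N_2}\beta_n\psi(n)e(a_1\overline{a_2}\bar dn/c)n^{it}\bigg|^2dt
  \;\le\; \int_{|t|\ll U}\sum_{\Psi(\bmod\ qc)}\bigg|\sum_{n\asymp N_2}\beta_n'\,\Psi(n)n^{it}\bigg|^2dt,
\]
with $\beta_n'=\beta_n\psi(n)$, $|\beta_n'|=|\beta_n|$. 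Now Lemma \ref{lemma: HLS} (hybrid large sieve) with modulus $qc$ bounds the right side by $(qcU+N_2)\|\beta\|^2$. A symmetric application of the large sieve in the $d$-aspect — before Cauchy--Schwarz, splitting the $d$-sum the same way — produces the complementary factor $(qcU+D_2)$. Collecting: the inner $t$-integral is $\ll D_2^{1/2}\big((qcU+D_2)(qcU+N_2)\big)^{1/2}\|\beta\|$, and multiplying back the $O(q^\ve)$ many $(q_1,q_2)$ pairs and the Gauss-sum factors $\tau(\bar\psi)g(\chi,\psi)/\varphi(q)\ll q^{1/2+\ve}c^{\ve}/c^{1/2-\ve}$ (the $c^{-1/2}$ coming from the standard orthogonality normalization when the additive character mod $c$ is incorporated) gives exactly \eqref{eqn: average H}.

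The main obstacle I anticipate is the bookkeeping that makes the reduction ``$d\mapsto$ character mod $qc$'' legitimate and injective: one must track how $e(a_1\overline{a_2}\bar dn/c)$ combines with $\psi(n)\pmod{q}$ into a genuine multiplicative character mod $qc$ times $n^{it}$, which forces the coprimality hypotheses $(b_1b_2c,q)=(a_1a_2,c)=1$ and squarefreeness of $q$ to be used precisely, and one must verify that distinct admissible $d$ give distinct characters so that the ``enlarge to all characters'' step is lossless up to the claimed powers. A secondary technical point is getting the exact exponent $q^{1/2+\ve}c^{-1/2+\ve}$ rather than something slightly weaker; this requires using $g(\chi,\psi)\ll q^{1+\ve}$ together with $|\tau(\bar\psi)|\ll q_2^{1/2}$ and $\varphi(q)\gg q^{1-\ve}$ carefully, and noting that only one application of Cauchy--Schwarz (in $d$) plus two large-sieve estimates (in $n$ and in $d$) are needed — not two Cauchy--Schwarz steps, which would lose a factor. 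I would follow the structure of Conrey--Iwaniec \cite[\S11]{conrey2000cubic} and Young \cite[\S\S9--10]{young2014weyl} closely here, as those handle the essentially identical bilinear form with the extra $t$-integration.
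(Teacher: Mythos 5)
There is a genuine gap at the heart of your argument. You assert that for fixed $d$ the function $n\mapsto \psi(n)\,e(a_1\overline{a_2}\bar dn/c)$ ``equals a single Dirichlet-type character mod $qc$,'' and you then enlarge the $d$-sum to a sum over all characters mod $qc$ and invoke Lemma \ref{lemma: HLS}. But $n\mapsto e(a_1\overline{a_2}\bar dn/c)$ is an \emph{additive} character mod $c$, not a multiplicative one, so the product is not a Dirichlet character and the hybrid large sieve (stated for Dirichlet characters) does not apply to it. The step you relegate to a parenthesis at the end --- ``the $c^{-1/2}$ coming from the standard orthogonality normalization when the additive character mod $c$ is incorporated'' --- is in fact the essential move: one must expand $e(m/c)$ for $(m,c)=1$ into multiplicative characters $\omega\pmod c$ via Gauss sums, $e(m/c)=\varphi(c)^{-1}\sum_{\omega(c)}\tau(\bar\omega)\omega(m)$. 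This is exactly what the paper does: after also expanding $H^*$ via \eqref{eqn: H^*=}, the $n$- and $d$-dependence separates through the genuine character $\psi\omega\pmod{qc}$, the factor $\tau(\bar\psi)\tau(\bar\omega)g(\chi,\psi)/\varphi(qc)\ll q^{1/2+\ve}c^{-1/2+\ve}$ produces the stated prefactor, and then a \emph{single} Cauchy--Schwarz over the joint family $(\psi\omega,t)$ yields two factors, each estimated by Lemma \ref{lemma: HLS} (the $d$-factor giving $(qcU+D_2)\sum|\alpha_d|^2\le (qcU+D_2)D_2$, the $n$-factor giving $(qcU+N_2)\|\beta\|^2$).

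Your order of operations also does not assemble into the claimed bound. Applying Cauchy--Schwarz in $d$ first (gaining $D_2^{1/2}$) and then a large sieve in $n$ would, even if the character identification were legitimate, produce $D_2^{1/2}(qcU+N_2)^{1/2}\|\beta\|$ with no source for the remaining factor $(qcU+D_2)^{1/2}$; your proposed fix --- ``a symmetric application of the large sieve in the $d$-aspect before Cauchy--Schwarz'' --- cannot be combined multiplicatively with the first estimate. Moreover the injectivity of $d\mapsto\Psi_d$ that your enlargement step requires fails as soon as $D_2>c$ (distinct $d$ need not be distinct mod $c$), and is in any case unnecessary in the correct argument. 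To repair the proof, replace the ``dualization'' by the Gauss-sum expansion of the additive character, combine with the character expansion of $H$, and perform Cauchy--Schwarz once over $(\psi\omega,t)$ followed by two applications of Lemma \ref{lemma: HLS}, as in Conrey--Iwaniec \cite[Lemma 11.1]{conrey2000cubic} and Young \cite[Lemma 9.2]{young2014weyl}.
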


This is a variation of~\cite[Lemma 11.1]{conrey2000cubic}. We combine the ingredients in both~\cite[Lemma 13]{blomer2012subconvexity} and~\cite[Lemma 9.2]{young2014weyl}.

\begin{proof}
  By \eqref{eqn: H to H^*}, we have
  \[
    \begin{split}
         & \int_{|t|\ll U} \bigg|\sum_{\substack{n\asymp N_2,d\asymp D_2\\(nd,qc)=1}} \alpha_d\beta_ne(a_1\overline{a_2}\bar{d}n/c)H(b_1\overline{b_2}\bar{d}n;q)\left(\frac{n}{d}\right)^{it}\bigg|dt \\
         & \hskip 60pt \leq \sum_{q_1q_2=q}\int_{|t|\ll U} \bigg|\sum_{\substack{n\asymp N_2,d\asymp D_2\\(nd,qc)=1}} \alpha_d\beta_ne(a_1\overline{a_2}\bar{d}n/c)H^*(b_1\overline{b_2}\bar{d}n\overline{q_1};q_2)\left(\frac{n}{d}\right)^{it}\bigg|dt.
    \end{split}
  \]
  We just handle the case $q_2=q$, since the other cases turn out to have a smaller upper bound.
  By \eqref{eqn: H^*=}, we have
  \[
    \begin{split}
         & \int_{|t|\ll U} \bigg|\sum_{\substack{n\asymp N_2,d\asymp D_2\\(nd,qc)=1}} \alpha_d\beta_ne(a_1\overline{a_2}\bar{d}n/c)H^*(b_1\overline{b_2}\bar{d}n;q)\left(\frac{n}{d}\right)^{it}\bigg|dt \\
         & \hskip 25pt \ll \int_{|t|\ll U} \frac{1}{\varphi(qc)}\sum_{\psi(q)}\sum_{\omega(c)}
         |\tau(\bar{\psi})\tau(\bar{\omega})g(\chi,\psi)|
         \bigg|\sum_{\substack{n\asymp N_2,d\asymp D_2\\(nd,qc)=1}} \alpha_d\beta_n\psi\omega(\bar{d}n)\left(\frac{n}{d}\right)^{it}\bigg|dt \\
         & \hskip 25pt \ll \frac{q^{1/2+\ve}}{c^{1/2-\ve}}
            \bigg(\int_{|t|\ll U}\sum_{\psi\omega(qc)}\bigg|\sum_d \alpha_d \psi\omega(\bar{d})d^{-it}\bigg|^2 dt \bigg)^{1/2}
            \bigg(\int_{|t|\ll U}\sum_{\psi\omega(qc)}\bigg|\sum_n \beta_n \psi\omega(n)n^{it}\bigg|^2 dt \bigg)^{1/2},
    \end{split}
  \]
  where $|\alpha_d|\leq 1$.
  Now after applying Lemma \ref{lemma: HLS}, we prove the lemma.
\end{proof}

\section{Proof of Theorem \ref{thm: q}}\label{sec: thm q}

Denote $R_q(n)=S(n,0;q)$ the Ramanujan sum.
By a long and complicated computation, Blomer~\cite[Eq. (51)]{blomer2012subconvexity} gave
\begin{equation}\label{eqn: cS=}
  \begin{split}
    & \cS_\sigma(q,N;\delta)
       = \gamma \sum_{\pm} \sum_{\delta_0\delta'=\delta}\frac{\mu(\delta_0)\chi(\delta)}{\delta_0}
                  \sum_{c_1'c_2'=q}
                  \sum_{\substack{f_1,f_2,d_2'\\ (f_1f_2d_2',c_2'\delta)=1\\ (f_1,f_2)=1,\mu^2(f_1)=1\\(f_1f_2,qd_2')=1}}
                  \sum_{\substack{n_1'|f_1c_1'\\(n_1',d_2')=1}}
                  \sum_{\substack{n_2\\ (n_2,d_2')=1}}\frac{d_2'\mu(f_2)}{c_2'} \\
       & \hskip 10pt \cdot \frac{A(n_2,n_1'f_2)}{n_1'n_2} \frac{\varphi(f_1f_2d_2'c_1')\varphi(f_1d_2'c_1'/n_1')}{\varphi(f_1f_2d_2'c_1'c_2')^2}
                e\left(\pm\sigma\frac{(n_1'c_2')^2f_2n_2\delta_0\overline{d_2'c_1'}}{f_1\delta'}\right)
       \frac{h\chi_h(-1)}{\varphi(k)} R_k(n_2n_1'f_2c_2')  \\
       & \hskip 20pt \cdot R_k(c_2'\delta_0)R_k(n_1'f_2c_2') H(\mp\sigma\overline{f_1d_2'hk}n_2(n_1'c_2')^2f_2c_2'\delta_0\overline{\delta'},\ell)
                \check{\Psi}_\sigma^\pm\left(\frac{n_2(n_1')^2N}{(f_1d_2'c_1')^3f_2},\frac{\sqrt{\delta N}}{qf_1f_2d_2'\delta_0}\right),
  \end{split}
\end{equation}
where $\varphi$ is the Euler function, $\gamma=\pi^{3/2}\chi(-1)/2$, and
\begin{equation}\label{eqn: h,k,l}
  h=(f_1f_2d_2',q)=(d_2',q),\quad k=(n_2(n_1'f_2c_2')^2c_2,q/h), \quad \ell=q/(hk).
\end{equation}
We summarize the relations of these variables and previous variables here,
although we don't need them in this section
(see Blomer~\cite[\S6]{blomer2012subconvexity})
\begin{gather}
       c=qr, \quad c_2=c/c_1, \quad \delta_0=(\delta,r), \quad \delta'=\delta/\delta_0,\nonumber\\
       c'=c/\delta_0, \quad c_2'=c_2/\delta_0,\quad r'=r/\delta_0, \label{eqn: relations} \\
       n_1'=n_1/f_2, \quad c_1'=c_1/r', \quad f_1f_2d_2'=r'.\nonumber
\end{gather}
As Blomer~\cite[\S7]{blomer2012subconvexity} did, in the $q$-aspect,
one can use the decay conditions
of $\check{\Psi}_\sigma^\pm$ to show that several variables can be dropped.
But in our case, things become much more complicated. However,
the argument is similar to Blomer~\cite[\S7]{blomer2012subconvexity}, and we need to
track the dependence on $T$ and $M$. One can
see that the argument in \S \ref{subsec: c_2'=q s=-1}--\ref{subsec: k=q s=-1}
is similar to \S \ref{subsec: main case s=-1}, and even easier.
In the $q$-aspect, results in \S \ref{subsec: c_2'=q s=-1}--\ref{subsec: k=q s=-1}
are better. However, it seems that to get a good bound in the $t$-aspect,
we have to use the large sieve in all cases.

\subsection{The main case}\label{subsec: main case s=-1}

We first deal with the main case, that is, $c_1'=q,\ c_2'=h=k=1$.
This is the most important case (at least in the $q$-aspect),
so we give the details of the treatment of this case.
Denote these terms in \eqref{eqn: cS=} as $\cS_\sigma^\dag(q,N;\delta)$.
Note that we have $(d_2'n_1'n_2,q)=1$. Write $f_1=n_1'g$.  Then we have
\begin{equation}\label{eqn: S_s^d}
  \begin{split}
       & \cS_\sigma^\dag(q,N;\delta)
       = \gamma \sum_{\pm} \sum_{\delta_0\delta'=\delta} \frac{\mu(\delta_0)\chi(\delta)}{\delta_0}
                  \sum_{\substack{g,n_1',f_2,d_2'\\(gn_1'f_2d_2',q\delta)=1\\(gn_1',f_2)=1,\mu^2(gn_1')=1\\(gn_1'f_2,d_2')=1}}
                  \frac{d_2'\mu(f_2)}{\varphi(n_1'f_2)n_1'}
        \sum_{\substack{n_2\\(n_2,qd_2')=1}}\frac{A(n_2,n_1'f_2)}{n_2} \\
       & \hskip 30pt \cdot e\left(\pm\sigma\frac{n_1'f_2n_2\delta_0\overline{d_2'q}}{g\delta'}\right)
            H(\mp\sigma\overline{gd_2'}n_2n_1'f_2\delta_0\overline{\delta'},q)
                \check{\Psi}_\sigma^\pm\left(\frac{n_2N}{(gd_2'q)^3n_1'f_2},
                \frac{\sqrt{\delta N}}{qgn_1'f_2d_2'\delta_0}\right).
  \end{split}
\end{equation}
Since we have $(n_1'f_2\delta_0\overline{d_2'q},g\delta')=1$, let
\[
  s=(n_2,g\delta'), \quad n_2=n_2's,\quad  (n_2',g\delta'/s)=1.
\]
We cancel the factor $s$ from the numerator and denominator of the exponential
getting
\begin{equation}\label{eqn: cS<<}
  \begin{split}
      \cS_\sigma^\dag(q,N;\delta) & = \gamma \sum_{\pm} \sum_{\delta_0\delta'=\delta} \frac{\mu(\delta_0)\chi(\delta)}{\delta_0}
                  \sum_{\substack{g,n_1',f_2\\(gn_1'f_2,q\delta)=1\\(gn_1',f_2)=1,\mu^2(gn_1')=1}}
                  \frac{\mu(f_2)}{\varphi(n_1'f_2)n_1'}
                  \sum_{s|g\delta'} \frac{1}{s}  \\
       & \hskip 30pt \cdot \sum_{\substack{d_2'\\(d_2',q\delta gn_1'f_2)=1}}
                \sum_{\substack{n_2'\\(n_2',qd_2'g\delta'/s)=1}}\frac{d_2'A(n_2's,n_1'f_2)}{n_2'}
                e\left(\pm\sigma\frac{n_1'f_2n_2'\delta_0\overline{d_2'q}}{g\delta'/s}\right) \\
       & \hskip 80pt \cdot  H(\mp\sigma\overline{gd_2'}n_2'sn_1'f_2\delta_0\overline{\delta'},q)
                \check{\Psi}_\sigma^\pm\left(\frac{n_2'sN}{(gd_2'q)^3n_1'f_2},
                \frac{\sqrt{\delta N}}{qgn_1'f_2d_2'\delta_0}\right).
  \end{split}
\end{equation}

The main factors in \eqref{eqn: cS<<} are the variables $d_2'$ and $n_2'$.
We open the coprimality condition $(d_2',n_2')=1$ by M\"{o}bius inversion.
We introduce a new variable $r$ and get
\[
  \begin{split}
      \cS_\sigma^\dag(q,N;\delta) & \ll (qT)^\ve \sup_{N_2,D_2} \sum_{\pm} \sum_{\delta_0\delta'=\delta} \frac{1}{\delta_0}
                  \sum_{\substack{g,n_1',f_2,r\\(gn_1'f_2,q\delta)=1\\(gn_1',f_2)=1,\mu^2(gn_1')=1\\(r,q\delta gn_1'f_2)=1}}
                  \sum_{s|g\delta'} \frac{1}{f_2(n_1')^2s} \\
       & \hskip 10pt \cdot  \bigg| \sum_{\substack{N_2\leq n_2'\leq 2N_2\\(n_2',qg\delta'/s)=1}}
                \sum_{\substack{D_2\leq d_2'\leq 2D_2 \\ (d_2',q\delta gn_1'f_2)=1}}
                \frac{d_2'A(n_2'rs,n_1'f_2)}{n_2'}
                e\left(\pm\sigma\frac{n_1'f_2n_2'\delta_0\overline{d_2'q}}{g\delta'/s}\right) \\
       & \hskip 50pt \cdot   H(\mp\sigma\overline{gd_2'}n_2'sn_1'f_2\delta_0\overline{\delta'},q)
                \check{\Psi}_\sigma^\pm\left(\frac{n_2'sN}{(gd_2'q)^3r^2n_1'f_2},
                \frac{\sqrt{\delta N}}{qgn_1'f_2d_2'r\delta_0}\right)\bigg|.
  \end{split}
\]
By Lemma \ref{lemma: Psi}, we can assume that
\begin{equation}\label{eqn: D_2}
  \left\{\begin{array}{ll}
    1\leq D_2 \leq \frac{\sqrt{\delta N}}{qgn_1'f_2r\delta_0 TM^{1-\ve}}, & \textrm{if } \sigma=1, \\
    1\leq D_2 \asymp \frac{\sqrt{\delta N}}{qgn_1'f_2r\delta_0 T}, & \textrm{if } \sigma=-1,
  \end{array}\right.
\end{equation}
and if $x=\frac{n_2'sN}{(gd_2'q)^3r^2n_1'f_2}\gg T^\ve$, then
\begin{equation}\label{eqn: N_2}
  \left\{\begin{array}{ll}
    1\leq N_2 \asymp \frac{(\delta^3N)^{1/2}}{(n_1'f_2)^2 r\delta_0^3 s}, & \textrm{if } \sigma=1, \\
    1\leq N_2 \leq \frac{(\delta^3N)^{1/2}}{(n_1'f_2)^2 r\delta_0^3 s M^{3-\ve}}, & \textrm{if } \sigma=-1.
  \end{array}\right.
\end{equation}

Now we consider the case $\sigma=-1$, and $x\gg T^\ve$.
By Lemma \ref{lemma: Psi}, we infer
\[
  \begin{split}
       \cS_\sigma^\dag(q,N;\delta) &
            \ll (qT)^\ve  M \sum_{\pm} \sum_{\delta_0\delta'=\delta}\frac{1}{\delta_0}
                  \sum_{\substack{g,n_1',f_2,r\\(gn_1'f_2,q\delta)=1\\(gn_1',f_2)=1,\mu^2(gn_1')=1\\(r,q\delta gn_1'f_2)=1}}
                  \frac{1}{(gf_2)^{3/2}(n_1')^{5/2}r}
                  \sum_{s|g\delta'} \frac{1}{s^{1/2}}\\
       & \hskip 20pt \cdot  \sup_{D_2,N_2} \frac{N^{1/2}}{q^{3/2}N_2^{1/2}D_2^{1/2}}
                \int_{|t|\ll U} \bigg| \sum_{\substack{N_2\leq n_2'\leq 2N_2\\(n_2',qg\delta'/s)=1}}
                \sum_{\substack{D_2\leq d_2'\leq 2D_2 \\ (d_2',q\delta gn_1'f_2)=1}}
                \alpha(d_2')\beta(n_2')  \\
       & \hskip 20pt \cdot  A(n_2'rs,n_1'f_2) e\left(\pm\sigma\frac{n_1'f_2n_2'\delta_0\overline{d_2'q}}{g\delta'/s}\right)
                 H(\mp\sigma\overline{gd_2'}n_2'sn_1'f_2\delta_0\overline{\delta'},q)
                 \left(\frac{n_2'}{d_2'}\right)^{it}\bigg|dt,
  \end{split}
\]
for certain coefficients $\alpha(d_2'),\beta(n_2')$ of absolute value at most $1$, and $D_2,N_2$ restricted as in \eqref{eqn: D_2} and \eqref{eqn: N_2}.
Note that by \eqref{eqn: X&D-} we have $U\asymp x^{1/3}\ll T^{1+\ve}/M$,
and from \eqref{eqn: blomer} we have
\[
  \bigg(\sum_{n_2'\asymp N_2}|A(n_2'rs,n_1'f_2)|^2\bigg)^{1/2} \ll (qT)^\ve N_2^{1/2}(rsn_1'f_2)^{7/32}.
\]
Applying Lemma \ref{lemma: average H}, we obtain
\[
  \begin{split}
     \frac{1}{N^{1/2}}\cS_\sigma^\dag(q,N;\delta)
       & \ll (qT)^\ve  \frac{M}{q} \sum_{\delta_0\delta'=\delta}\frac{1}{\delta_0}
                  \sum_{g,n_1',f_2,r}   \frac{1}{(gf_2)^{3/2}(n_1')^{5/2}r}
                  \sum_{s|g\delta'} \frac{1}{s^{1/2}}\\
       & \hskip 60pt \cdot  \sup_{D_2,N_2} (qU+D_2)^{1/2} (qUg\delta'/s+N_2)^{1/2} (rsn_1'f_2)^{7/32} \\
       & \ll (qT)^\ve  \frac{M}{q} \sum_{\delta_0\delta'=\delta}\frac{1}{\delta_0}
                  \sum_{g,n_1',f_2,r}   \frac{1}{(gf_2)^{3/2}(n_1)^{5/2}r}
                  \sum_{s|g\delta'} \frac{1}{s^{1/2}} \\
       & \hskip 60pt \cdot  \left(\frac{qT}{M}\left(\frac{g\delta}{s}\right)^{1/2}
                +\left(\frac{qT}{M}\right)^{1/2}\left(\frac{(qT)^{3/2}}{rsM^{3}}\right)^{1/2}\right)
                (rsn_1'f_2)^{7/32}\\
       & \ll (qT)^\ve \left( T\delta^{1/2}(qT)^{7/64} + q^{1/4}T^{5/4}M^{-1}\right)
         \ll (qT)^\ve \delta^{1/2} q^{1/4}TM,
  \end{split}
\]
provided $T^{1/8+\ve}\ll M \ll T^{1/2}$. Here we use the fact
$$rg\delta n_1'f_2\ll (qT)^{1/2+\ve},$$
which is a consequence of \eqref{eqn: D_2} and \eqref{eqn: N}.
Note that if $\sigma=1$, and $x\gg T^\ve$, then the same argument will give
\[
  \begin{split}
     \frac{1}{N^{1/2}}\cS_\sigma^\dag(q,N;\delta)
       & \ll (qT)^\ve  \frac{M}{q} \sum_{\delta_0\delta'=\delta}\frac{1}{\delta_0}
                  \sum_{g,n_1',f_2,r}   \frac{1}{(gf_2)^{3/2}(n_1')^{5/2}r}
                  \sum_{s|g\delta'} \frac{1}{s^{1/2}} \\
       & \hskip 60pt \cdot  \left(\frac{qT}{M}\left(\frac{g\delta}{s}\right)^{1/2}
                +\left(\frac{qT}{M}\right)^{1/2}\left(\frac{(qT)^{3/2}}{rs}\right)^{1/2}\right)
                (rsn_1'f_2)^{7/32}\\
       & \ll (qT)^\ve \left( T\delta^{1/2}(qT)^{7/64} + q^{1/4}T^{5/4}M^{1/2}\right)
        \ll (qT)^\ve \delta^{1/2} q^{1/4}TM,
  \end{split}
\]
provided $M\asymp T^{1/2}$.
Note that this will not give us a subconvexity bound in the $t$-aspect.

Now we consider the case $x\ll T^\ve$.
Since $D=\frac{\sqrt{\delta N}}{qgn_1'f_2d_2'r\delta_0}\gg T$,
we have $x/D^3\ll T^{\ve-3}$. Hence
\begin{equation}\label{eqn:N2forx<<}
  N_2 \ll \frac{(\delta^3N)^{1/2}T^\ve}{(n_1'f_2)^2 r\delta_0^3 s T^{3}}
  \ll (qT)^\ve \frac{q^{3/2}}{rsT^{3/2}}.
\end{equation}
Also note that the upper bound of $N$ implies that this will happen only if $q\gg T^{1-\ve}$.
By Lemma \ref{lemma: Psi}, for both $\sigma=\pm1$, we have
\[
  \begin{split}
        \cS_\sigma^\dag(q,N;\delta) &
            \ll (qT)^\ve  T \sum_{\pm} \sum_{\delta_0\delta'=\delta}\frac{1}{\delta_0}
                  \sum_{\substack{g,n_1',f_2,r\\(gn_1'f_2,q\delta)=1\\(gn_1',f_2)=1,\mu^2(gn_1')=1\\(r,q\delta gn_1'f_2)=1}}
                  \frac{1}{(gf_2)^{3/2}(n_1')^{5/2}r}
                  \sum_{s|g\delta'} \frac{1}{s^{1/2}} \\
       & \hskip 30pt \cdot  \sup_{D_2,N_2} \frac{N^{1/2}}{q^{3/2}N_2^{1/2}D_2^{1/2}}
                \bigg| \sum_{\substack{N_2\leq n_2'\leq 2N_2\\(n_2',qg\delta'/s)=1}}
                \sum_{\substack{D_2\leq d_2'\leq 2D_2 \\ (d_2',q\delta gn_1'f_2)=1}}
                \alpha(d_2')\beta(n_2')  \\
       & \hskip 60pt \cdot  A(n_2'rs,n_1'f_2) e\left(\pm\sigma\frac{n_1'f_2n_2'\delta_0\overline{d_2'q}}{g\delta'/s}\right)
                 H(\mp\sigma\overline{gd_2'}n_2'sn_1'f_2\delta_0\overline{\delta'},q) \bigg|,
  \end{split}
\]
for certain coefficients $\alpha(d_2'),\beta(n_2')$ of absolute value at most $1$, and $D_2,N_2$ restricted as in \eqref{eqn: D_2} and \eqref{eqn:N2forx<<}.
Now by Blomer~\cite[Lemma 13]{blomer2012subconvexity}, we have
\[
  \begin{split}
     \frac{1}{N^{1/2}}\cS_\sigma^\dag(q,N;\delta)
       & \ll (qT)^\ve  \frac{T}{q} \sum_{\delta_0\delta'=\delta}\frac{1}{\delta_0}
                  \sum_{g,n_1',f_2,r}   \frac{1}{(gf_2)^{3/2}(n_1')^{5/2}r}
                  \sum_{s|g\delta'} \frac{1}{s^{1/2}}\\
       & \hskip 90pt \cdot  \sup_{D_2,N_2} (q+D_2)^{1/2} (qg\delta'/s+N_2)^{1/2} (rsn_1'f_2)^{7/32} \\
       & \ll (qT)^\ve  \frac{T}{q} \sum_{\delta_0\delta'=\delta}\frac{1}{\delta_0}
                  \sum_{g,n_1',f_2,r}   \frac{1}{(gf_2)^{3/2}(n_1')^{5/2}r}
                  \sum_{s|g\delta'} \frac{1}{s^{1/2}} \\
       & \hskip 90pt \cdot  \left(q\left(\frac{g\delta}{s}\right)^{1/2}
                + q^{1/2}\left(\frac{q^{3/2}}{rsT^{3/2}}\right)^{1/2}\right)
                (rsn_1'f_2)^{7/32}\\
       & \ll (qT)^\ve \left( T\delta^{1/2}(qT)^{7/64} + q^{1/4}T^{1/4}\right)
         \ll (qT)^\ve \delta^{1/2} q^{1/4}TM,
  \end{split}
\]
provided $T^{1/8+\ve}\ll M \ll T^{1/2}$.
Hence in both cases, we prove \eqref{eqn: sum over cS} under the assumption $M\asymp T^{1/2}$.


\subsection{The case $c_2'=q$}\label{subsec: c_2'=q s=-1}

In this subsection, we prove the terms attached with
$c_2'=q$, $c_1'=1$ have a good bound, that is, we show that we can assume $c_2'=1$.
Denote these terms in \eqref{eqn: cS=} as $\cS_\sigma^\flat(q,N;\delta)$.
Let $D=\frac{\sqrt{\delta N}}{qf_1f_2d_2'\delta_0}$. By Lemma \ref{lemma: Psi}, we can assume
\[
  \left\{
  \begin{array}{ll}
    D\gg TM^{1-\ve}, & \textrm{if } \sigma=1, \\
    D\asymp T, & \textrm{if } \sigma=-1.
  \end{array}\right.
\]
Hence, by \eqref{eqn: N} and \eqref{eqn: relations}, we have
\[
  x=\frac{n_2(n_1')^2N}{(f_1d_2'c_1')^3f_2}
   =\frac{n_2(n_1')^2 f_2^2\delta_0^3 (qD)^3}{(\delta^3 N)^{1/2}} \gg T^\ve
\]
By the condition $(f_1f_2d_2',q\delta)=1$, we have $h=(d_2',q)=1$, and then $k=q$ and $\ell=1$.
Hence,
by the same process we obtain
\[
  \begin{split}
      \cS_\sigma^\flat(q,N;\delta)
       & = \gamma \sum_{\pm} \sum_{\delta_0\delta'=\delta}\frac{\mu(\delta_0)\chi(\delta)}{\delta_0}
                  \sum_{\substack{g,n_1',f_2,r\\ (gn_1'f_2,q\delta)=1\\ (gn_1',f_2)=1,\mu^2(gn_1')=1\\(r,q\delta gn_1'f_2)=1}}
                  \frac{\mu(f_2)\mu(r)}{n_1'\varphi(n_1'f_2)}
                  \sum_{s|g\delta'}\frac{1}{s}
                   \sum_{\substack{d_2'\\(d_2',q\delta gn_1'f_2)=1}} d_2' \\
       & \hskip 5pt \cdot \sum_{\substack{n_2'\\(n_2',g\delta'/s)=1}}
                 \frac{A(n_2'rs,n_1'f_2)}{qn_2'}  e\left(\pm\sigma\frac{n_1'q^2f_2n_2'\delta_0\overline{d_2'}}{g\delta'/s}\right)
                \check{\Psi}_\sigma^\pm\left(\frac{n_2's N}{(gd_2')^3r^2n_1'f_2},\frac{\sqrt{\delta N}}{qgn_1'f_2d_2'r\delta_0}\right).
  \end{split}
\]
Then by Lemma \ref{lemma: Psi}, we have
\[
  \begin{split}
     &  \frac{1}{N^{1/2}}\cS_\sigma^\flat(q,N;\delta)
       \ll (qT)^\ve \sum_{\pm} \sum_{\delta_0\delta'=\delta}\frac{1}{\delta_0}
                  \sum_{g,n_1',f_2,r}
                  \frac{1}{(gf_2)^{3/2}(n_1')^{5/2}r} \sum_{s|g\delta'}\frac{1}{s^{1/2}}
       \sup_{D_2,N_2} \frac{M}{qD_2^{1/2}N_2^{1/2}} \\
       & \hskip 5pt \cdot \int_{|t|\ll U}
                \bigg|\sum_{\substack{d_2'\asymp D_2\\(d_2',q\delta gn_1'f_2)=1}}
                \sum_{\substack{n_2'\asymp N_2\\(n_2',g\delta'/s)=1}}  \alpha(d_2')\beta(n_2') A(n_2'rs,n_1'f_2)
                e\left(\pm\sigma\frac{n_1'q^2f_2n_2'\delta_0\overline{d_2'}}{g\delta'/s}\right)
                \left(\frac{n_2'}{d_2'}\right)^{it}\bigg|dt,
  \end{split}
\]
for certain coefficients $\alpha(d_2'),\beta(n_2')$ of absolute value at most $1$.
Here we can assume that
\begin{equation*}
  \left\{\begin{array}{ll}
    1\leq D_2 \leq \frac{\sqrt{\delta N}}{qgn_1'f_2r\delta_0 TM^{1-\ve}}, & \textrm{if } \sigma=1, \\
    1\leq D_2 \asymp \frac{\sqrt{\delta N}}{qgn_1'f_2r\delta_0 T}, & \textrm{if } \sigma=-1,
  \end{array}\right.
\end{equation*}
and
\begin{equation*}
  \left\{\begin{array}{ll}
    1\leq N_2 \asymp \frac{(\delta^3N)^{1/2}}{q^3(n_1'f_2)^2 r\delta_0^3 s}, & \textrm{if } \sigma=1, \\
    1\leq N_2 \leq \frac{(\delta^3N)^{1/2}}{q^3(n_1'f_2)^2 r\delta_0^3 s M^{3-\ve}}, & \textrm{if } \sigma=-1.
  \end{array}\right.
\end{equation*}
Note that in both cases, we have $U\ll T^{1+\ve}/M$.
We can use the multiplicative characters to separate the variables in the exponential function,
together with Cauchy--Schwarz inequality and Lemma \ref{lemma: HLS}, we obtain
\[
  \begin{split}
     \frac{1}{N^{1/2}}\cS_\sigma^\flat(q,N;\delta)
       & \ll (qT)^\ve \sum_{\pm} \sum_{\delta_0\delta'=\delta}\frac{1}{\delta_0}
                  \sum_{g,n_1',f_2,r} \frac{1}{(gf_2)^{3/2}(n_1')^{5/2}r} \sum_{s|g\delta'}\frac{1}{s^{1/2}}  \\
       & \hskip 60pt \cdot \sup_{D_2,N_2}  \frac{M}{q} (g\delta'/s)^{1/2}
                (U+D_2)^{1/2}(U+N_2)^{1/2} (rsn_1'f_2)^{7/32}.
  \end{split}
\]
In the case $\sigma=-1$, we have
\[
  \begin{split}
     \frac{1}{N^{1/2}}\cS_\sigma^\flat(q,N;\delta)
       & \ll (qT)^\ve \sum_{\pm} \sum_{\delta_0\delta'=\delta}\frac{1}{\delta_0}
                  \sum_{g,n_1',f_2,r} \frac{1}{(gf_2)^{3/2}(n_1')^{5/2}r}
                  \sum_{s|g\delta'}\frac{1}{s^{1/2}}  \\
       & \hskip 100pt \cdot \frac{M}{q^{3/4}} (g\delta'/s)^{1/2}
                \left(\frac{T}{M} + \frac{T^{5/4}}{M^2r^{1/2}} \right)
                (rsn_1'f_2)^{7/32} \\
       & \ll (qT)^\ve \delta^{1/2} TM,
  \end{split}
\]
provided $T^{1/8+\ve}\ll M\ll T^{1/2}$,
which is good enough for our purpose by \eqref{eqn: sum over cS}.
In the case $\sigma=1$, the same argument will give
\[
  \frac{1}{N^{1/2}}\cS_\sigma^\flat(q,N;\delta)   \ll (qT)^\ve \delta^{1/2} TM,
\]
provided $M\asymp T^{1/2}$.
From now on we assume $c_2'=1, c_1'=q$.

\subsection{The case $h=q$}\label{subsec: h=q s=-1}

Next we show that the contribution from the terms with $h=q$ is negligible.
Denote these terms in \eqref{eqn: cS=} as
$\cS_\sigma^\natural(q,N;\delta)$.
In this case, we have $k=\ell=1$ and $q|d_2'$, and hence we also get $n_1'|f_1$.
By a similar argument (writing $d_2'=qd_2''$), we get
\begin{equation*}
  \begin{split}
      \cS_\sigma^\natural(q,N;\delta)
       & = \gamma \sum_{\pm} \sum_{\delta_0\delta'=\delta}\frac{\mu(\delta_0)\chi(\delta)}{\delta_0}
                  \sum_{\substack{g,n_1',f_2,r\\ (gn_1'f_2,q\delta)=1\\ (gn_1',f_2)=1,\mu^2(gn_1')=1\\(r,q\delta gn_1'f_2)=1}}
                  \frac{\mu(f_2)\mu(r)}{n_1'\varphi(n_1'f_2)}
                  \sum_{s|g\delta'} \frac{1}{s}
                   \\
       & \hskip 30pt \cdot q^2\chi_q(-1) \sum_{\substack{d_2''\\ (d_2'',\delta gn_1'f_2)=1}}
                  \sum_{\substack{n_2'\\ (n_2',qg\delta'/s)=1}} \frac{d_2''A(n_2'rs,n_1'f_2)}{n_2'}
                 \\
       & \hskip 60pt \cdot e\left(\pm\sigma\frac{n_1'f_2n_2'\delta_0\overline{d_2''q^2}}{g\delta'/s}\right)
                \check{\Psi}_\sigma^\pm\left(\frac{n_2'sN}{(gd_2''q^2)^3r^2n_1'f_2},
                \frac{\sqrt{\delta N}}{q^2gn_1'f_2d_2''r\delta_0}\right).
  \end{split}
\end{equation*}
Now we consider the sum over $d_2''\asymp D_2, n_2'\asymp N_2$, and we can assume
\begin{equation*}
  \left\{\begin{array}{ll}
    1\leq D_2 \leq \frac{\sqrt{\delta N}}{q^2gn_1'f_2r\delta_0 TM^{1-\ve}}, & \textrm{if } \sigma=1, \\
    1\leq D_2 \asymp \frac{\sqrt{\delta N}}{q^2gn_1'f_2r\delta_0 T}, & \textrm{if } \sigma=-1,
  \end{array}\right.
\end{equation*}
and if $x=\frac{n_2'sN}{(gd_2''q^2)^3r^2n_1'f_2}\gg T^\ve$, then
\begin{equation*}
  \left\{\begin{array}{ll}
    1\leq N_2 \asymp \frac{(\delta^3N)^{1/2}}{(n_1'f_2)^2 r\delta_0^3 s}, & \textrm{if } \sigma=1, \\
    1\leq N_2 \leq \frac{(\delta^3N)^{1/2}}{(n_1'f_2)^2 r\delta_0^3 s M^{3-\ve}}, & \textrm{if } \sigma=-1.
  \end{array}\right.
\end{equation*}
If $x\gg T^\ve$ and $\sigma=-1$, then by Lemma \ref{lemma: Psi}, we have
\begin{equation*}
  \begin{split}
      &  \frac{1}{N^{1/2}} \cS_\sigma^\natural(q,N;\delta)
         \ll  \sum_{\pm} \sum_{\delta_0\delta'=\delta}\frac{1}{\delta_0}
                  \sum_{g,n_1',f_2,r} \frac{1}{(gf_2)^{3/2}(n_1')^{5/2}r}
                  \sum_{s|g\delta'} \frac{1}{s^{1/2}}
          \sup_{D_2,N_2} \frac{M}{q(D_2N_2)^{1/2}} \\
       & \hskip 15pt \cdot \int_{|t|\ll U}
            \bigg| \sum_{\substack{d_2''\asymp D_2\\ (d_2'',\delta gn_1'f_2)=1}}
                  \sum_{\substack{n_2'\asymp N_2 \\ (n_2',qg\delta'/s)=1}} \alpha(d_2'')\beta(n_2')
       A(n_2'rs,n_1'f_2) e\left(\pm\sigma\frac{n_1'f_2n_2'\delta_0\overline{d_2''q^2}}{g\delta'/s}\right)
                \left(\frac{n_2'}{d_2''}\right)^{it} \bigg|dt,
  \end{split}
\end{equation*}
for certain coefficients $\alpha(d_2''),\beta(n_2')$ of absolute value at most $1$.
Hence by Lemma \ref{lemma: HLS} again, we have
\begin{equation*}
  \begin{split}
     \frac{1}{N^{1/2}} \cS_\sigma^\natural(q,N;\delta)
       & \ll  \sup_{D_2,N_2} (qT)^\ve \frac{M}{q}r^{7/32}(U+D_2)^{1/2}(\delta U+N_2)^{1/2}\\
       & \ll  (qT)^\ve \delta^{1/2}\left(\frac{T}{q}r^{7/32}+\frac{T^{5/4}}{q^{1/4}M}\right) \ll (qT)^\ve TM\delta^{1/2},
  \end{split}
\end{equation*}
provided $T^{1/8+\ve}\ll M\ll T^{1/2}$.
And again, if $x\gg T^\ve$ and $\sigma=1$, the same argument shows that
\begin{equation*}
  \begin{split}
     \frac{1}{N^{1/2}} \cS_\sigma^\natural(q,N;\delta)
       & \ll (qT)^\ve \delta^{1/2}\left(\frac{T}{q}r^{7/32}+\frac{T^{5/4}M^{1/2}}{q^{1/4}}\right) \ll (qT)^\ve TM\delta^{1/2},
  \end{split}
\end{equation*}
provided $M\asymp T^{1/2}$.

Now if $x\ll T^\ve$, then by Lemma \ref{lemma: Psi}, for both $\sigma=\pm1$, we have
\begin{equation*}
  \begin{split}
     \frac{1}{N^{1/2}} \cS_\sigma^\natural(q,N;\delta)
       & \ll  \sum_{\pm} \sum_{\delta_0\delta'=\delta}\frac{1}{\delta_0}
                  \sum_{g,n_1',f_2,r} \frac{1}{(gf_2)^{3/2}(n_1')^{5/2}r}
                  \sum_{s|g\delta'} \frac{1}{s^{1/2}}
       \sup_{D_2,N_2} \frac{T}{q(D_2N_2)^{1/2}} \\
       & \hskip 10pt \cdot   \bigg| \sum_{\substack{d_2''\asymp D_2\\ (d_2'',\delta gn_1'f_2)=1}}
                  \sum_{\substack{n_2'\asymp N_2 \\ (n_2',qg\delta'/s)=1}} \alpha(d_2'')\beta(n_2')
       A(n_2'rs,n_1'f_2) e\left(\pm\sigma\frac{n_1'f_2n_2'\delta_0\overline{d_2''q^2}}{g\delta'/s}\right)\bigg|.
  \end{split}
\end{equation*}
Hence by Blomer~\cite[Lemma 13]{blomer2012subconvexity}, we have
\begin{equation*}
  \begin{split}
     \frac{1}{N^{1/2}} \cS_\sigma^\natural(q,N;\delta)
       & \ll  \sup_{D_2,N_2} (qT)^\ve \frac{T}{q}r^{7/32}(1+D_2)^{1/2}(\delta+N_2)^{1/2}\\
       & \ll  (qT)^\ve \delta^{1/2}\left(\frac{T}{q}r^{7/32}+\frac{T^{1/4}}{q^{1/4}}\right) \ll (qT)^\ve TM\delta^{1/2},
  \end{split}
\end{equation*}
provided $T^{1/8+\ve}\ll M\ll T^{1/2}$. This proves \eqref{eqn: sum over cS} in this case.
So from now on, we can assume $c_2'=h=1$.

\subsection{The case $k=q$}\label{subsec: k=q s=-1}

Now, we show that we can also exclude the case $k=q$.
First note that we can simplify $k=(n_2n_1',q)$. Hence
we distinguish two cases and show that the contribution of $q|n_1'$ and $q|n_2$ is negligible.
We first deal with the case $q|n_1'$.
Denote these terms in \eqref{eqn: cS=} as $\cS_\sigma^\sharp(q,N;\delta)$.
As before, we have
\[
  x \gg \frac{q^2N}{q^3}\left(\frac{\sqrt{\delta N}}{Dq\delta_0}\right)^{-3}
    \gg \frac{q^2 D^3}{(\delta^3N)^{1/2}} \gg q^{1/2-\ve}T^{3/2-\ve} \gg T^\ve.
\]
Note that for $q$ prime, we have $R_q(b)=\varphi(q)$ if $q|b$, and $R_q(b)=-1$ if $q\nmid b$.
By the same process, (writing $n_1'=qn_1''$), we obtain
\begin{equation*}
  \begin{split}
     & \cS_\sigma^\sharp(q,N;\delta)
       = \gamma \sum_{\pm} \sum_{\delta_0\delta'=\delta}\frac{-\mu(\delta_0)\chi(\delta)}{\delta_0}
                  \sum_{\substack{g,n_1'',f_2,r\\ (gn_1''f_2,q\delta)=1\\ (gn_1'',f_2)=1,\mu^2(gn_1'')=1\\(r,q\delta gn_1'' f_2)=1}}
                  \frac{\mu(f_2)\mu(r)}{n_1''\varphi(f_2)}
                  \sum_{\substack{s|g\delta'\\(s,d_2')=1}}\frac{1}{s}
          \sum_{\substack{d_2'\\ (d_2',q\delta gn_1''f_2)=1}} d_2' \\
       & \hskip 20pt \cdot       \sum_{\substack{n_2'\\ (n_2',g\delta'/s)=1}}
                  \frac{A(n_2'rs,n_1''qf_2)}{qn_2'}
            e\left(\pm\sigma\frac{q^2n_1''f_2n_2'\delta_0\overline{d_2'q}}{g\delta'/s}\right)
                    \check{\Psi}_\sigma^\pm\left(\frac{n_2'sN}{(gd_2'q)^3r^2qn_1''f_2},
                        \frac{\sqrt{\delta N}}{q^2gn_1''f_2d_2'r\delta_0}\right).
  \end{split}
\end{equation*}
Now we consider the sum over $d_2'\asymp D_2, n_2'\asymp N_2$,
and by Lemma \ref{lemma: Psi}, we can assume
\begin{equation*}
  \left\{\begin{array}{ll}
    1\leq D_2 \leq \frac{\sqrt{\delta N}}{q^2gn_1''f_2r\delta_0 TM^{1-\ve}}, & \textrm{if } \sigma=1, \\
    1\leq D_2 \asymp \frac{\sqrt{\delta N}}{q^2gn_1''f_2r\delta_0 T}, & \textrm{if } \sigma=-1,
  \end{array}\right.
\end{equation*}
and
\begin{equation*}
  \left\{\begin{array}{ll}
    1\leq N_2 \asymp \frac{(\delta^3N)^{1/2}}{q^2(n_1''f_2)^2 r\delta_0^3 s}, & \textrm{if } \sigma=1, \\
    1\leq N_2 \leq \frac{(\delta^3N)^{1/2}}{q^2(n_1''f_2)^2 r\delta_0^3 s M^{3-\ve}}, & \textrm{if } \sigma=-1.
  \end{array}\right.
\end{equation*}
If $\sigma=-1$, then by Lemma \ref{lemma: Psi}, we have
\begin{equation*}
  \begin{split}
      & \frac{1}{N^{1/2}} \cS_\sigma^\sharp(q,N;\delta)
       \ll  \sum_{\pm} \sum_{\delta_0\delta'=\delta}\frac{1}{\delta_0}
                  \sum_{g,n_1'',f_2,r}  \frac{1}{(gn_1''f_2)^{3/2}r}
                  \sum_{\substack{s|g\delta'\\(s,d_2')=1}}\frac{1}{s^{1/2}}
        \sup_{D_2,N_2} \frac{M}{q^3(D_2N_2)^{1/2}} \\
       & \hskip 5pt \cdot \int_{|t|\ll U} \bigg|
                  \sum_{\substack{d_2'\asymp D_2\\ (d_2',q\delta gn_1''f_2)=1}}
                  \sum_{\substack{n_2'\asymp N_2\\ (n_2',d_2'g\delta'/s)=1}}
                  \alpha(d_2')\beta(n_2')  A(n_2'rs,n_1''qf_2) e\left(\pm\sigma\frac{q^2n_1''f_2n_2'\delta_0\overline{d_2'q}}{g\delta'/s}\right)
                 \left(\frac{n_2'}{d_2'}\right)^{it} \bigg|dt,
  \end{split}
\end{equation*}
for certain coefficients $\alpha(d_2'),\beta(n_2')$ of absolute value at most $1$.
Hence by Lemma \ref{lemma: HLS} again, we have
\begin{equation*}
  \begin{split}
        \frac{1}{N^{1/2}} \cS_\sigma^\sharp(q,N;\delta)
       & \ll  \sup_{D_2,N_2} (qT)^\ve \frac{M}{q^3}r^{7/32}(U+D_2)^{1/2}(\delta U+N_2)^{1/2}\\
       & \ll  (qT)^\ve \delta^{1/2}\left(\frac{T}{q^3}r^{7/32}+\frac{T^{5/4}}{q^{3}M}\right) \ll (qT)^\ve TM\delta^{1/2},
  \end{split}
\end{equation*}
provided $T^{1/8+\ve}\ll M\ll T^{1/2}$.
If $\sigma=1$, we have
\begin{equation*}
  \begin{split}
        \frac{1}{N^{1/2}} \cS_\sigma^\sharp(q,N;\delta)
       & \ll (qT)^\ve \delta^{1/2}\left(\frac{T}{q^3}r^{7/32}+\frac{T^{5/4}M^{1/2}}{q^{3}}\right) \ll (qT)^\ve TM\delta^{1/2},
  \end{split}
\end{equation*}
provided $M\asymp T^{1/2}$.

From now on we assume $(q,n_1')=1$. Since $c_1'=q$ and $n_1'|f_1c_1'$, we have $n_1'|f_1$.
Now we treat the case $q|n_2$.
Denote these terms in \eqref{eqn: cS=} as $\cS_\sigma^{\sharp\sharp}(q,N;\delta)$.
Write $n_2=qn_2'$. By a similar argument, we get
\begin{equation*}
  \begin{split}
      & \cS_\sigma^{\sharp\sharp}(q,N;\delta)
      = \gamma \sum_{\pm} \sum_{\delta_0\delta'=\delta}\frac{\mu(\delta_0)\chi(\delta)}{\delta_0}
                  \sum_{\substack{g,n_1',f_2,r\\ (gn_1'f_2,q\delta)=1\\ (gn_1',f_2)=1,\mu^2(gn_1')=1\\(r,q\delta gn_1'f_2)=1}}
                  \frac{\mu(f_2)\mu(r)}{n_1'\varphi(n_1'f_2)}
                  \sum_{\substack{s|g\delta'\\(s,d_2')=1}} \frac{1}{s}
              \frac{1}{q} \sum_{\substack{d_2'\\(d_2',q\delta gn_1'f_2)=1}} d_2' \\
      & \hskip 30pt \cdot     \sum_{\substack{n_2'\\ (n_2',g\delta'/s)=1}} \frac{A(n_2'rqs,n_1'f_2)}{n_2'}
              e\left(\pm\sigma\frac{n_1'f_2n_2'q\delta_0\overline{d_2'q}}{g\delta'/s}\right)
       \check{\Psi}_\sigma^\pm\left(\frac{n_2'sqN}{(gd_2'q)^3r^2n_1'f_2},\frac{\sqrt{\delta N}}{qgn_1'f_2d_2'r\delta_0}\right).
  \end{split}
\end{equation*}
We consider the sum over $d_2'\asymp D_2, n_2'\asymp N_2$,
and by Lemma \ref{lemma: Psi}, we can assume
\begin{equation*}
  \left\{\begin{array}{ll}
    1\leq D_2 \leq \frac{\sqrt{\delta N}}{qgn_1'f_2r\delta_0 TM^{1-\ve}}, & \textrm{if } \sigma=1, \\
    1\leq D_2 \asymp \frac{\sqrt{\delta N}}{qgn_1'f_2r\delta_0 T}, & \textrm{if } \sigma=-1,
  \end{array}\right.
\end{equation*}
and if $x=\frac{n_2'sqN}{(gd_2'q)^3r^2n_1'f_2}\gg T^\ve$, then
\begin{equation*}
  \left\{\begin{array}{ll}
    1\leq N_2 \asymp \frac{(\delta^3N)^{1/2}}{q(n_1'f_2)^2 r\delta_0^3 s}, & \textrm{if } \sigma=1, \\
    1\leq N_2 \leq \frac{(\delta^3N)^{1/2}}{q(n_1'f_2)^2 r\delta_0^3 s M^{3-\ve}}, & \textrm{if } \sigma=-1.
  \end{array}\right.
\end{equation*}
If $x\gg T^\ve$ and $\sigma=-1$, then by Lemma \ref{lemma: Psi}, we have
\begin{equation*}
  \begin{split}
      &  \frac{1}{N^{1/2}}\cS_\sigma^{\sharp\sharp}(q,N;\delta)
      \ll \sum_{\delta_0\delta'=\delta}\frac{1}{\delta_0}
                  \sum_{g,n_1',f_2,r}
                  \frac{1}{(gf_2)^{3/2}(n_1')^{5/2}r}
                  \sum_{\substack{s|g\delta'\\(s,d_2')=1}} \frac{1}{s^{1/2}}
              \sup_{D_2,N_2} \frac{M}{q^2(D_2N_2)^{1/2}} \\
       & \hskip 10pt \cdot
         \int_{|t|\ll U} \bigg|\sum_{\substack{d_2'\asymp D_2\\(d_2',q\delta gn_1'f_2)=1}}
                  \sum_{\substack{n_2'\asymp N_2\\ (n_2',g\delta'/s)=1}} \alpha(d_2')\beta(n_2')
              A(n_2'rqs,n_1'f_2) e\left(\pm\sigma\frac{n_1'f_2n_2'q\delta_0\overline{d_2'q}}{g\delta'/s}\right)
                \left(\frac{n_2'}{d_2'}\right)^{it} \bigg|dt,
  \end{split}
\end{equation*}
for certain coefficients $|\alpha(d_2')|,|\beta(n_2')|\leq 1$.
The same argument shows that
\[
  \frac{1}{N^{1/2}}\cS_\sigma^{\sharp\sharp}(q,N;\delta)
  \ll (qT)^\ve q^{-1}TM,
\]
provided $T^{1/8+\ve}\ll M\ll T^{1/2}$.
And if $x\gg T^\ve$ and $\sigma=1$, we get
\[
  \frac{1}{N^{1/2}}\cS_\sigma^{\sharp\sharp}(q,N;\delta)
  \ll (qT)^\ve q^{-1}TM,
\]
provided $M\asymp T^{1/2}$ again.
If $x\ll T^\ve$, then by Lemma \ref{lemma: Psi} again, for both $\sigma=\pm1$, we have
\begin{equation*}
  \begin{split}
       \frac{1}{N^{1/2}}\cS_\sigma^{\sharp\sharp}(q,N;\delta)
       & \ll \sum_{\delta_0\delta'=\delta}\frac{1}{\delta_0}
                  \sum_{g,n_1',f_2,r}
                  \frac{1}{(gf_2)^{3/2}(n_1')^{5/2}r}
                  \sum_{\substack{s|g\delta'\\(s,d_2')=1}} \frac{1}{s^{1/2}}
         \sup_{D_2,N_2} \frac{T}{q^2(D_2N_2)^{1/2}} \\
       & \hskip 5pt \cdot  \bigg|\sum_{\substack{d_2'\asymp D_2\\(d_2',q\delta gn_1'f_2)=1}}
                  \sum_{\substack{n_2'\asymp N_2\\ (n_2',g\delta'/s)=1}} \alpha(d_2')\beta(n_2')
       A(n_2'rqs,n_1'f_2) e\left(\pm\sigma\frac{n_1'f_2n_2'q\delta_0\overline{d_2'q}}{g\delta'/s}\right) \bigg|.
  \end{split}
\end{equation*}
A better bound will show up under the assumption $T^{1/8+\ve}\ll M\ll T^{1/2}$.

\subsection{Conclusion}

From the above discussion, we can take $M\asymp T^{1/2}$.
This proves Proposition \ref{prop: q}, and hence Theorem \ref{thm: q}.
On the other hand, recalling $\cR^\pm$ in \eqref{eqn: cR},
we have
\begin{equation}\label{eqn: cR^-}
  \cR^- \ll q^{1/4}TM(qT)^\ve,
\end{equation}
provided $T^{1/8+\ve}\ll M \ll T^{1/2}$.
In the rest of this paper, we will use another method to bound $\cR^+$,
and then prove Theorem \ref{thm: t} and Theorem \ref{thm: main}.

\section{Initial setup of Theorem \ref{thm: t}}\label{sec: setup t}

As in \S\ref{sec: setup q}, we will use
the moment method to prove Theorem \ref{thm: t}.
Since $q^{5/4}T^{3/2}\leq q^{4}T^{4/3}$ if $q\geq T^{2/33}$,
we know that Theorem \ref{thm: t} follows from Theorem \ref{thm: q}
if $q\geq T^{2/33}$. To prove Theorem \ref{thm: t}, we only
need to consider the case $q\leq T^{2/33}$.
However, in the most part of our following arguments, we just
require $q\leq T^{1/4}$, since we will need this in
the proof of Theorem \ref{thm: main},
(see the end of \S\ref{sec: MT}).
Similarly, at first, we will prove the following proposition.

\begin{proposition}\label{prop: t}
  With notation as before, for any $\ve>0$, and $T$ large,
  assuming
  \begin{equation}\label{eqn: q&M}
    q\ll T^{1/6},
    \quad \textrm{and} \quad
    T^{1/3+\ve}\ll M \ll T^{1/2},
  \end{equation}
  we have
  \[
    \sum_{\substack{u_j\in\cB^*(q)\\ T-M\leq t_j\leq T+M}} L(1/2,\phi\times u_j\times\chi)
            + \frac{1}{4\pi}\int_{T-M}^{T+M}|L(1/2+it,\phi\times\chi)|^2dt
    \ll_{\phi,\ve} q^{4}TM(qT)^{\ve}.
  \]
\end{proposition}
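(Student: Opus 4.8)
The plan is to carry out the $t$-aspect strategy of Li \cite{li2011bounds}, refined by Mckee--Sun--Ye \cite{mckee2015improved}, whose novelty compared with the proof of Proposition~\ref{prop: q} is a \emph{second} application of the $GL(3)$ Voronoi formula. As in \S\ref{sec: setup q} I would form the spectrally normalized first moment $\cM$ of \eqref{eqn: cW} with the weight $h(t)$ of \eqref{eqn: f(t)}, reduce by \eqref{eqn: omega^*_j} and \eqref{eqn: omega^*(t)} to proving $\cM\ll_{\phi,\ve}q^{3}TM(qT)^{\ve}$, apply the approximate functional equation (Lemma~\ref{lemma: AFE}), truncate at $m^{2}n\ll(qT)^{3+\ve}$, separate variables in $V_t$ by Lemma~\ref{lemma: V_t} (only the $k=l=0$ term needs attention), and apply the Kuznetsov formula (Lemma~\ref{lemma: KTF}) with $m=1$. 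The diagonal is $\ll TM(qT)^{\ve}$ by \eqref{eqn: RS bound}, and there remain the off-diagonal terms $\cR^{\pm}$ of \eqref{eqn: cR}. The $K$-Bessel term $\cR^{-}$ is already bounded by $q^{1/4}TM(qT)^{\ve}$ in \eqref{eqn: cR^-}, which is admissible under \eqref{eqn: q&M}, and the range $x\ll T^{\ve}$ of the $J$-Bessel term is treated exactly as in the corresponding parts of \S\ref{sec: thm q}; everything therefore reduces to the $J$-Bessel term $\cR^{+}$ in the range $x\gg T^{\ve}$, where the large sieve alone (see \S\ref{subsec: main case s=-1}) is not subconvex in $t$.

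For this piece I would apply the $GL(3)$ Voronoi formula (Lemma~\ref{lemma: VSF}) to the $n$-sum, using this time the genuinely oscillatory asymptotic of $\Psi^{\pm}$ from Lemma~\ref{lemma: Psi=M+O} (available in the range $x\gg T^{\ve}$) together with the stationary-phase form of $H^{+}$ from Lemma~\ref{lemma: H+} and Lemma~\ref{lemma: SP}, in place of the large-sieve-friendly Lemma~\ref{lemma: Psi}. After the usual arithmetic rearrangement of the Kloosterman and character sums this presents $\cR^{+}$, in each dyadic block, as a sum over the new dual variable $n_{2}$ of $A(n_{2},n_{1})$ against two Kloosterman-type sums, the $\chi$-sum, and a weight localised by a stationary point at a definite scale. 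The decisive step is then a second application of the Voronoi formula, to the $n_{2}$-sum: the exponential sums contract, with the Kloosterman sums and the $\chi$-sum fusing into a short complete sum that one evaluates explicitly, and the surviving oscillatory integral --- over the continuous Kuznetsov variable $v$ of Lemma~\ref{lemma: H+} and the Voronoi phase $e(\pm 3(xy)^{1/3})$ --- is evaluated by the weighted stationary phase lemma, Lemma~\ref{lemma: MSY}, carried to $n$th order as in \cite[\S6]{mckee2015improved}. Taking $n$ large makes the error terms in \eqref{eqn: MSY} negligible, and assembling the main terms with \eqref{eqn: RS bound} and \eqref{eqn: blomer} should give $\cR^{+}\ll q^{3}TM(qT)^{\ve}$, hence the proposition.

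The main obstacle is the second Voronoi application itself, on two fronts. Arithmetically it multiplies the moduli, divisors and dual variables (further complicated by the quadratic twist), so one must reorganise the nested sums, isolate the short complete character sums, and bound them while tracking the full $T$- and $M$-dependence --- a longer version of the computations in \cite[\S6--7]{blomer2012subconvexity} and \cite[\S4--5]{li2011bounds}. Analytically, the second Voronoi pays off only when the parameters are balanced, and this is exactly where the hypotheses \eqref{eqn: q&M} enter: $M\gg T^{1/3+\ve}$ forces the dual $n_{2}$-sum to be effectively short and keeps $\Psi^{\pm}$ in its asymptotic regime, while $q\ll T^{1/6}$ holds the $q$-cost of the manoeuvre below $q^{3}$ --- for larger $q$ the second Voronoi loses, which is precisely why Proposition~\ref{prop: t} is complementary to Proposition~\ref{prop: q}. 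A subsidiary but unavoidable task is checking the hypotheses \eqref{eqn: M0}, \eqref{eqn: f^(r)}, \eqref{eqn: f''} and \eqref{eqn: g^(s)} of Lemma~\ref{lemma: MSY} for the precise phase and amplitude that arise, with a suitable choice of the parameters $M_{0},N_{0},T_{0},U_{0}$, so that its error terms are under control.
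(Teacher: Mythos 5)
Your plan follows the paper's route essentially step for step: moment method, Kuznetsov, reduction to $\cR^{+}$ (with $\cR^{-}$ disposed of by \eqref{eqn: cR^-}), the Mckee--Sun--Ye asymptotic for $H^{+}$, a first Voronoi application with Lemma~\ref{lemma: Psi=M+O}, a stationary-phase evaluation, a second Voronoi application to the dual $n_{2}$-sum, and a final assembly in which $q\ll T^{1/6}$ and $M\gg T^{1/3+\ve}$ enter exactly where you say they do (e.g.\ the analogue of \eqref{eqn: error1}). The one substantive point where your ordering diverges from the paper's --- and where executing your version would be harder --- is the placement of Lemma~\ref{lemma: MSY}: in the paper the $n$th-order weighted stationary phase is applied to the $y$-integral produced by the \emph{first} Voronoi formula (i.e.\ to \eqref{eqn: Psi(x)_0}), and it is precisely the higher-order terms $\varpi_{2j}$ of \eqref{eqn: mainterm} that make the error terms \eqref{eqn: ET1}--\eqref{eqn: ET3} admissible down to $M\asymp T^{1/3+\ve}$; the resulting main terms $a_{2j}(y_0)$ of \eqref{eqn: a2jalpha} still oscillate in $n_{2}$, and it is to \emph{these} that the second Voronoi formula is applied, after which only a non-stationary-phase argument and the second derivative test \eqref{eqn: cW_j<<} are needed. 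Deferring the careful stationary phase until after the second Voronoi, as you propose, would mean transforming an $n_{2}$-sum whose weight is an unevaluated double integral, and the gain from the $n$th-order expansion would no longer land where it is needed; with that reordering corrected, your outline matches the proof.
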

%

It's easy to see that Theorem \ref{thm: t} will follow from Proposition \ref{prop: t}.
And as in \S \ref{sec: setup q}, it suffices to prove
\begin{equation}\label{eqn: cR^pm<<}
  \cR^\pm \ll q^{3}TM(qT)^{\ve},
\end{equation}
provided $T^{1/3+\ve}\ll M\ll T^{1/2}$.
Recall that $\cR^\pm$ is defined as in \eqref{eqn: cR}.
Note that we have \eqref{eqn: cR^-}, which gives a better bound
for $\cR^-$. So we only need to prove \eqref{eqn: cR^pm<<} for $\cR^+$.
As Blomer~\cite[\S5]{blomer2012subconvexity} did,
opening the Kloosterman sum, splitting the $n$-sum in to residue classes mod $c$,
and detecting the summation congruence condition
by primitive additive characters, it suffices to prove, for $T^{1/3+\ve}\ll M\ll T^{1/2}$,
(see Blomer~\cite[p. 1406--1407]{blomer2012subconvexity})
\begin{equation}\label{eqn: cR<<}
  \sum_{\substack{m^2\delta^3\leq (qT)^{3+\ve}\\ (\delta,q)=1,\ |\mu(\delta)|=1}}
  \frac{|A(1,m)|}{m\delta^{3/2}}  \sup_N |\cS(q,N;\delta)|
  \ll  q^{3}TM(qT)^\ve,
\end{equation}
where
\begin{equation}\label{eqn: cS(n)}
  \begin{split}
     \cS(q,N;\delta) & := \sum_{q|c}\frac{1}{c^2} \sum_{c_1|c}\underset{b(c_1)}{{\sum}^*} \underset{d(c)}{{\sum}^*}
                e\left(\frac{\bar{d}}{c}\right) \sum_{a(c)}\chi(a)
                e\left(-\frac{\bar{b}a}{c_1}\right) e\left(\frac{\delta da}{c}\right)\\
      & \hskip 90pt \cdot \sum_n A(n,1)e\left(\frac{\bar{b}n}{c_1}\right)
         v\left(\frac{n}{N}\right)n^{-1/2-u}H^+\left(\frac{4\pi \sqrt{\delta n}}{c}\right),
  \end{split}
\end{equation}
where
$v$ a suitable fixed smooth function with support in $[1,2]$, and $N$ satisfying \eqref{eqn: N}.
Note that as pointing out in the end of \S\ref{sec: setup q}, we can restrict the $c$-sum to
$c\leq (qT)^B$, for some fixed $B>0$.
Now we want to use the Voronoi formula to deal with
the $n$-sum in \eqref{eqn: cS(n)}. Before we do this, we need to
give an asymptotic formula for $H^+$.
These will be done in the next section.

\section{Integral transforms and special functions} \label{sec: Psi II}

In this section, we follow Blomer~\cite[\S3]{blomer2012subconvexity},
Li~\cite[\S4]{li2011bounds}, and McKee--Sun--Ye~\cite[\S6]{mckee2015improved} to
give an estimate for the $n$-sum in \eqref{eqn: cS(n)}.
As in Li~\cite[Proposition 4.1]{li2011bounds}, we will
give an asymptotic formula for $H^+$ at first.
We shall follow Li~\cite[\S4]{li2011bounds} and
McKee--Sun--Ye~\cite[\S6]{mckee2015improved} more closely, so readers
who are familiar with their works can safely skip this section at a first reading.
As Li~\cite[\S4]{li2011bounds} did, we have
\begin{equation}\label{eqn: H^+=}
  H^+(x) = H^+_1(x) + H^+_2(x) + O\left(T^{-A}\right),
\end{equation}
where
\begin{equation}\label{eqn: H^+_1}
  H^+_1(x) = \frac{4TM}{\pi} \int_{t=-\infty}^\infty
                \int_{\zeta=- T^\varepsilon}^{T^\varepsilon}
                \frac{1}{\cosh t}\cos(x\cosh\zeta)e\left(\frac{tM\zeta}{\pi}\right)
                e\left(\frac{T\zeta}{\pi}\right)dtd\zeta,
\end{equation}
and
\begin{equation}\label{eqn: H^+_2}
  H^+_2(x) = \frac{4M^2}{\pi} \int_{t=-\infty}^\infty
                \int_{\zeta=- T^\varepsilon}^{T^\varepsilon}
                \frac{t}{\cosh t}\cos(x\cosh\zeta)e\left(\frac{tM\zeta}{\pi}\right)
                e\left(\frac{T\zeta}{\pi}\right)dtd\zeta.
\end{equation}
In the following we only treat $H^+_1(x)$, since $H^+_2(x)$
is a lower order term which can be handled in a similar way.
It is clear that
\begin{equation}\label{eqn: H^+_1=}
  \begin{split}
    H^+_1(x) & =\frac{4MT}{\pi}\int_{-T^\varepsilon}^{T^\varepsilon}
           \widehat{k} \left(-\frac{M\zeta}{\pi}\right) \cos(x\cosh\zeta)e\left(\frac{T\zeta}{\pi}\right)d\zeta \\
     & = 4T\int_{-\frac{MT^\varepsilon}{\pi}}^{\frac{MT^\varepsilon}{\pi}}
     \widehat{k}(\zeta)\cos\left(x\cosh\frac{\zeta\pi}{M}\right)e\left(-\frac{T\zeta}{M}\right)d\zeta,
  \end{split}
\end{equation}
by making a change of variable $-\frac{M\zeta}{\pi}\mapsto\zeta$, here
\begin{equation}\label{eqn: k(t)}
  k(t)=\frac{1}{\cosh t},
\end{equation}
and
\begin{equation}
\widehat{k}(\zeta)=\int_{-\infty}^\infty k(t)e(-t\zeta)dt,
\end{equation}
is its Fourier transform.
Since $\widehat{k}(\zeta)$ is a Schwartz class function, one can extend
the integral in \eqref{eqn: H^+_1=} to $(-\infty, \infty)$ with a negligible error term.
Now let
\begin{equation}\label{eqn: W(x)}
  W(x) := T\int_\mathbb{R} \widehat{k} (\zeta)
  \cos \left(  x\cosh \frac{\zeta \pi}{M}\right)
  e\left(  -\frac{T\zeta}{M}  \right) d\zeta,
\end{equation}
then we have
\[
  H^+_1(x) = 4W(x) + O\left(T^{-A}\right).
\]

\begin{lemma}\label{lemma: W asymp}
  \begin{itemize}
    \item [(i)]  For $|x|\leq T^{1-\ve}M$, we have
            \begin{equation*}
              W(x) \ll _{\ve,A} T^{-A}.
            \end{equation*}
    \item [(ii)] Assume $MT^{1-\ve}\leq x\leq T^2$, and
            $T^{1/3+2\ve}\leq M\leq T^{1/2}$. Let $L_1,L_2\in \bbZ_{+}$.
            We have
            \begin{equation}\label{eqn: H^+ asymp}
                \begin{split}
                    W(x) & = \frac{MT}{\sqrt{x}}\sum_\pm e\left(\mp\frac{x}{2\pi}\pm \frac{T^2}{\pi x}\right)
                        \sum_{l=0}^{L_1}\sum_{0\leq l_1\leq 2l}\sum_{\frac{l_1}{4}\leq l_2\leq L_2} c_{l,l_1,l_2}
                        \frac{M^{2l-l_1}T^{4l_2-l_1}}{x^{l+3l_2-l_1}} \\
                    & \hskip 30pt \cdot \bigg[ \widehat{k}^{(2l-l_1)}\left(\mp\frac{2MT}{\pi x}\right)
                            - \frac{\pi^6 ix}{6!M^6}(y^6\widehat{k}(y))^{(2l-l_1)}\left(\mp\frac{2MT}{\pi x}\right) \\
                    & \hskip 150pt   +  \frac{\pi^{12} i^2x^2}{2!(6!)^2M^{12}}
                        (y^{12}\widehat{k}(y))^{(2l-l_1)}\left(\mp\frac{2MT}{\pi x}\right)\bigg] \\
                    & \hskip 30pt +  O\left(\frac{TM}{\sqrt{x}}\left(\frac{T^4}{x^3}\right)^{L_2+1}
                        + T\left(\frac{M}{\sqrt{x}}\right)^{2L_2+3} + \frac{Tx^3}{M^{18}}\right),
                \end{split}
            \end{equation}
            where $c_{l,l_1,l_2}$ are constants depending only on the indices.
            Here $(y^n\hat{k}(y))^{(l)}(x) = \frac{d^l (y^n\hat{k}(y))}{dy^l}\Big|_{y=x}$.
  \end{itemize}
\end{lemma}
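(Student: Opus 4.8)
The plan is to obtain both parts by a stationary phase analysis of the oscillatory $\zeta$-integral defining $W(x)$ in \eqref{eqn: W(x)}. I would start by writing $\cos(x\cosh(\zeta\pi/M))=\frac{1}{2}\sum_{\pm}e^{\pm i x\cosh(\zeta\pi/M)}$, so that
\[
  W(x)=\frac{T}{2}\sum_{\pm}\int_{\bbR}\widehat{k}(\zeta)\,e\!\left(f_{\pm}(\zeta)\right)d\zeta,
  \qquad f_{\pm}(\zeta):=\pm\frac{x}{2\pi}\cosh\!\Big(\frac{\zeta\pi}{M}\Big)-\frac{T\zeta}{M}.
\]
Since $k(t)=1/\cosh t$, the transform $\widehat{k}$ is real, even, and of Schwartz type with rapid decay, so up to an error $O(T^{-A})$ one may truncate the integral to $|\zeta|\le(\log T)^2$ in part (i), and to $|\zeta|\le T^{\ve}$ in part (ii) (taken large enough to retain the stationary point). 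On these ranges $f_{\pm}'(\zeta)=\pm\frac{x}{2M}\sinh(\zeta\pi/M)-\frac{T}{M}$ and $f_{\pm}^{(r)}(\zeta)\ll x/M^{r}$ for $r\ge2$.

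For part (i): when $|x|\le T^{1-\ve}M$ the first term of $f_{\pm}'$ is $\ll x(\log T)^2/M^{2}=o(T/M)$ on $|\zeta|\le(\log T)^2$, so $|f_{\pm}'(\zeta)|\gg T/M$ while the higher derivatives of $f_{\pm}$ and of $\widehat{k}$ are of much smaller order; repeated integration by parts in $\zeta$ then gives $W(x)\ll_{\ve,A}T^{-A}$.

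For part (ii): assume $MT^{1-\ve}\le x\le T^{2}$ and $T^{1/3+2\ve}\le M\le T^{1/2}$. The equation $f_{\pm}'(\zeta)=0$ has a unique root $\zeta_{0}$ with $\sinh(\zeta_{0}\pi/M)=\pm 2T/x$, whence $\zeta_{0}=\pm\frac{2MT}{\pi x}\big(1+O((T/x)^{2})\big)$; this lies well inside the truncation unless $W(x)$ and the right side of \eqref{eqn: H^+ asymp} are both $O(T^{-A})$, in which case there is nothing to prove. Since $f_{\pm}''(\zeta_{0})\asymp x/M^{2}$, I would invoke the stationary phase expansion of Lemma~\ref{lemma: MSY} with $f=f_{\pm}$ (handling the branch with negative second derivative by conjugation, using that $\widehat{k}$ is real), $g=\widehat{k}$, parameters $M_{0}\asymp M$, $T_{0}\asymp x$, $N_{0},U_{0}\asymp1$, and the order $n$ taken large. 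The boundary terms involve $\widehat{k}$ and its derivatives at $\pm T^{\ve}$ and are $O(T^{-A})$, and for $n$ large each error term of Lemma~\ref{lemma: MSY} is $\ll T^{-A}$ in this range (using $x\ge MT^{1-\ve}$ and $M\ge T^{1/3+2\ve}$). This yields
\[
  W(x)=\frac{T}{2}\sum_{\pm}\frac{e\big(\pm f_{\pm}(\zeta_{0})\pm\frac{1}{8}\big)}{\sqrt{|f_{\pm}''(\zeta_{0})|}}\Big(\widehat{k}(\zeta_{0})+\sum_{1\le j\le n}\varpi_{2j}\frac{(-1)^{j}(2j-1)!!}{(4\pi i\lambda_{2})^{j}}\Big)+O(T^{-A}).
\]

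It remains to put this in the form \eqref{eqn: H^+ asymp}. Following Li \cite[\S4]{li2011bounds} and Mckee--Sun--Ye \cite[\S6]{mckee2015improved}, I would expand $f_{\pm}(\zeta_{0})$, $f_{\pm}''(\zeta_{0})$, $\zeta_{0}$, and the quantities $\lambda_{k},\eta_{k},\varpi_{2j}$ into power series in the small parameters $T^{2}/x^{2}$ (and $T^{4}/x^{3}$ for the phase) by Taylor expanding $\cosh$, $u\mapsto\sqrt{1+u}$, and $\operatorname{arcsinh}$; Taylor expanding $\exp$ of the resulting lower-order phase terms, in particular treating the $(\zeta\pi/M)^{6}/6!$ term of $\cosh$ as a perturbation whose first- and second-order contributions produce the $\frac{\pi^{6}ix}{6!M^{6}}(y^{6}\widehat{k}(y))^{(2l-l_{1})}$ and $\frac{\pi^{12}i^{2}x^{2}}{2!(6!)^{2}M^{12}}(y^{12}\widehat{k}(y))^{(2l-l_{1})}$ terms; and evaluating $\widehat{k}$ and its derivatives at the leading stationary point $\mp\frac{2MT}{\pi x}$. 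Collecting by powers of $M$, $T$, $x$ (and absorbing the $e(\pm1/8)$ and the numerical constants from the stationary phase) gives the triple sum with constants $c_{l,l_{1},l_{2}}$, the three displayed error terms recording the truncations of the $L_{1}$-expansion, the $L_{2}$-expansion, and the $\cosh$-perturbation. The main obstacle is precisely this final reorganization: there is no conceptual difficulty, but one must track many competing small parameters so that every term is either retained explicitly or provably absorbed into one of the three error terms, and verify uniformity in $x$ throughout $MT^{1-\ve}\le x\le T^{2}$ — in particular in both regimes $x\gtrsim MT$ (where $\widehat{k}$ at the stationary point has size $\asymp1$) and $MT^{1-\ve}\le x\lesssim MT$ (where it is small but not negligible).
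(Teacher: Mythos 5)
The paper gives no proof of this lemma at all; it simply cites Mckee--Sun--Ye \cite[Proposition 6.1]{mckee2015improved}. Your sketch is a correct reconstruction of that reference's argument: decompose the cosine into exponentials, truncate the $\zeta$-integral using the Schwartz decay of $\widehat{k}$, obtain part (i) by integration by parts from $|f_\pm'(\zeta)|\gg T/M$ when $|x|\leq T^{1-\ve}M$, and for part (ii) locate the stationary point $\sinh(\zeta_0\pi/M)=\pm 2T/x$, apply the $n$th-order stationary phase Lemma~\ref{lemma: MSY}, and Taylor-expand $\zeta_0$, $f_\pm(\zeta_0)$, $f_\pm''(\zeta_0)$, and the $\varpi_{2j}$ in the small parameters $T^2/x^2$, $M^2/x$, and $T^4/x^3$ to produce the triple sum, the three displayed remainder terms, and the $\cosh$-perturbation corrections in $M^{-6},M^{-12}$. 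One detail worth making explicit: the truncation radius must be chosen a definite power of $T^\ve$ larger than the stationary point $\zeta_0\asymp MT/x$ (which can itself be as large as $\asymp T^\ve$ at the lower end $x\asymp MT^{1-\ve}$) so that both distances $\gamma-\alpha$ and $\beta-\gamma$ in Lemma~\ref{lemma: MSY} stay $\gg T^\ve$ uniformly; with that adjustment the boundary terms and the four error terms of Lemma~\ref{lemma: MSY} are all $O(T^{-A})$ for $n$ large. Otherwise the outline matches the source.
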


\begin{proof}
  See McKee--Sun--Ye~\cite[Lemma 4.1]{mckee2015improved}.
\end{proof}

Now we estimate $\cS(q,N;\delta)$. Let $x=\frac{4\pi\sqrt{\delta n}}{c}$ in the above lemma.
Assume $MT^{1-\ve}\leq x\leq T^2$.
 By choosing $L_1,L_2$ large enough (depending on $\ve$)
in \eqref{eqn: H^+ asymp}, the contribution to $\cS$ from the first two error terms can be made
as small as desired. We need to estimate the contribution from the last error term.
By the support of $v$, we may assume that $x\ll \frac{(qT)^{3/2+\ve}}{c}$.
Note that for $q\ll T^{1/4}$, we always have $x\leq T^2$.
So the contribution from the last error term is bounded by
\begin{equation}\label{eqn: error1}
  \begin{split}
      &\sum_{q|c}\frac{1}{c^2} \sum_{c_1|c}\underset{b(c_1)}{{\sum}^*}
       \sum_{a(c)}|S(\delta a,1;c)|
      \sum_{n\ll N} \frac{|A(n,1)|}{n^{1/2}} \frac{T(\delta N)^{3/2}}{M^{18}c^3} \\
      &\hskip 80pt \ll (qT)^\ve \frac{T (qT)^{6}}{M^{18}} \sum_{q|c}\frac{1}{c^{5/2}}
       \ll (qT)^\ve q^{7/2} TM \frac{T^6}{M^{19}} \ll (qT)^\ve q^{3/2} TM,
  \end{split}
\end{equation}
provided $T^{1/3+2\ve}\leq M\leq T^{1/2}$ and $q\leq T^{1/6}$.
In the finite series \eqref{eqn: H^+ asymp}, with our assumptions, we always have
\[
  \frac{M^{2l-l_1}T^{4l_2-l_1}}{x^{l+3l_2-l_1}} \ll 1.
\]
All the terms in \eqref{eqn: H^+ asymp} are similar, and can be estimated in a similar way,
so we will only work with the first term, that is, the term with $l=l_1=l_2=0$.
We are led to estimate
\begin{equation}\label{eqn: tilde cS}
  \begin{split}
     \tilde{\cS}(q,N;\delta) & := \frac{TM}{\delta^{1/4}} \sum_{\substack{q|c\\c\ll C}}\frac{1}{c^{3/2}} \sum_{c_1|c}\underset{b(c_1)}{{\sum}^*} \underset{d(c)}{{\sum}^*}
                e\left(\frac{\bar{d}}{c}\right) \sum_{a(c)}\chi(a)
                e\left(-\frac{\bar{b}a}{c_1}\right) e\left(\frac{\delta da}{c}\right)
      \sum_n A(n,1)e\left(\frac{\bar{b}n}{c_1}\right) \psi(n),
  \end{split}
\end{equation}
where
\begin{equation}\label{eqn: C}
  C=\frac{\sqrt{\delta N}}{T^{1-\ve}M},
\end{equation}
and
\begin{equation}\label{eqn: psi}
  \psi(y) = y^{-3/4-u} v\left(\frac{y}{N}\right) \sum_\pm
            e\left(\mp\frac{2\sqrt{\delta y}}{c}\pm \frac{T^2c}{4\pi^2\sqrt{\delta y}}\right)
            \widehat{k}\left(\mp\frac{MTc}{2\pi^2\sqrt{\delta y}}\right).
\end{equation}
Now we apply the Voronoi formula for the $n$-sum in \eqref{eqn: tilde cS},
getting
\begin{equation}\label{eqn: tilde cS Voronoi}
  \begin{split}
     \tilde{\cS}(q,N;\delta) & = \frac{TM}{\delta^{1/4}} \sum_{\substack{q|c\\c\ll C}}\frac{1}{c^{3/2}}
                \sum_{c_1|c}\underset{b(c_1)}{{\sum}^*} \underset{d(c)}{{\sum}^*}
                e\left(\frac{\bar{d}}{c}\right) \sum_{a(c)}\chi(a)
                e\left(-\frac{\bar{b}a}{c_1}\right) e\left(\frac{\delta da}{c}\right)\\
      & \hskip 60pt \cdot \frac{c_1\pi^{3/2}}{2}\sum_{\pm}\sum_{n_1|c_1}\sum_{n_2\geq1}
                \frac{A(n_2,n_1)}{n_1n_2} S\left(b,\pm n_2;c_1/n_1\right)
                \Psi^\pm\left(\frac{n_1^2n_2}{c_1^3}\right) \\
      & = \frac{\pi^{3/2}TM}{2}\sum_{\pm}\sum_{\substack{q|c\\c\ll C}}\frac{1}{c^{3/2}}
                \sum_{c_1|c}c_1 \sum_{n_1|c_1}\sum_{n_2\geq1} \frac{A(n_2,n_1)}{n_1n_2}
                \cT_{c_1,n_1,n_2}^{\pm,\delta}(c,q)  \Psi^\pm\left(\frac{n_1^2n_2}{c_1^3}\right),
  \end{split}
\end{equation}
where $\Psi^\pm(x)$ is defined as in \eqref{eqn: Psi} with $\psi$ in \eqref{eqn: psi},
and $\cT_{c_1,n_1,n_2}^{\pm,\delta}(c,q)=\cT_{\delta,c_1,n_1,n_2}^{\pm,\sigma}(c,q)$ with $\sigma=1$,
where the later one is defined in \eqref{eqn: cT}.
Now, we will deal with $\Psi^\pm(x)$, where $x=\frac{n_1^2n_2}{c_1^3}$.
Since for $q\ll T^{1/4}$, by \eqref{eqn: N} we have
\[
  xN = \frac{n_1^2n_2}{c_1^3}N \geq NC^{-3} \geq M^3 T^{1-\ve}.
\]
By Lemma \ref{lemma: Psi=M+O}, we have
\[
  \Psi^\pm(x) = \gamma_1  x^{2/3} \sum_{\sigma\in\{\pm1\}} \int_0^\infty a_{\sigma}(y)
  e\left(\sigma\frac{2\sqrt{\delta y}}{c}\pm3(xy)^{1/3}\right) dy + \textrm{lower order terms},
\]
where
\begin{equation*}
  a_{\sigma}(y) = y^{-13/12-u} v\left(\frac{y}{N}\right)
            e\left(-\sigma \frac{T^2c}{4\pi^2\sqrt{\delta y}}\right)
            \widehat{k}\left(\sigma\frac{MTc}{2\pi^2\sqrt{\delta y}}\right).
\end{equation*}
Note that for $\Psi^+$, when $\sigma=1$ has no stationary points, so the contribution
to $\tilde{\cS}$ is negligible; so does $\Psi^-$ with $\sigma=-1$.
Hence, we have
\begin{equation*}
  \Psi^\pm(x) = \gamma_1  x^{2/3} \int_0^\infty a_{\mp1}(y)
                e\left(\mp\frac{2\sqrt{\delta y}}{c}\pm3(xy)^{1/3}\right) dy
                + \textrm{lower order terms},
\end{equation*}
By \eqref{eqn: cR<<} and \eqref{eqn: tilde cS}, to prove Proposition \ref{prop: t},
we only need to show
\begin{equation}\label{eqn: tilde cR}
   \begin{split}
     \tilde{\cR}=\tilde{\cR}(q,N;\delta) & := \sum_{\pm}\sum_{\substack{q|c\\c\ll C}}\frac{1}{c^{3/2}}
                \sum_{c_1|c}c_1 \sum_{n_1|c_1}\sum_{n_2\geq1} \frac{A(n_2,n_1)}{n_1n_2}
       \cT_{c_1,n_1,n_2}^{\pm,\delta}(c,q)
                \Psi^\pm_0\left(\frac{n_1^2n_2}{c_1^3}\right)
     \ll (qT)^\ve q^3,
   \end{split}
\end{equation}
where
\begin{equation}\label{eqn: Psi(x)_0}
  \Psi^\pm_0(x) := x^{2/3} \int_0^\infty a_{\mp1}(y)
        e\left(\mp\frac{2\sqrt{\delta y}}{c}\pm3(xy)^{1/3}\right) dy.
\end{equation}

Now we will use the stationary phase method to deal with \eqref{eqn: Psi(x)_0}.
Denote
\begin{equation}\label{eqn: phi(y)}
  \phi(y)=\mp\frac{2\sqrt{\delta y}}{c}\pm3(xy)^{1/3}.
\end{equation}
By the first derivative of $\phi$ and the support of $y$, we know $\Psi^\pm_0(x)$ is negligible unless
\begin{equation}\label{eqn: x&n_2}
  \frac{2}{3}\frac{(\delta^3N)^{1/2}}{c^3} \leq x \leq 2\frac{(\delta^3N)^{1/2}}{c^3}, \quad \textrm{that is,} \quad
  \frac{2}{3}\frac{(\delta^3N)^{1/2}c_1^3}{n_1^2c^3}\leq  n_2 \leq 2\frac{(\delta^3N)^{1/2}c_1^3}{n_1^2c^3}.
\end{equation}
Since the support of $a_{\mp1}$ is in $[N,2N]$, we have
\begin{equation}\label{eqn: int a}
  \int_0^\infty a_{\mp1}(y) e\left(\mp\frac{2\sqrt{\delta y}}{c}\pm3(xy)^{1/3}\right) dy
  = \int_{\frac{1}{4}x^2c^6/\delta^3}^{\frac{9}{2}x^2c^6/\delta^3} a_{\mp1}(y)
    e\left(\mp\frac{2\sqrt{\delta y}}{c}\pm3(xy)^{1/3}\right) dy.
\end{equation}
Note that we have
\[
  N\leq y\leq 2N, \quad \textrm{and} \quad c\ll C=\frac{\sqrt{\delta N}}{T^{1-\ve}M}.
\]
Write $a(y)=a_{\mp1}(y)$. Let $n_0\in\bbN$ which will be chosen later.
Simple calculus estimates give us
\[
  \phi^{(r)}(y) \ll \frac{\sqrt{\delta N}}{c}N^{-r}, \quad
  \textrm{for}\ r=2,\ldots,2n_0+3,
\]
and
\[
  a^{(r)}(y) \ll N^{-13/12}\left(\frac{\delta^{1/2}N^{3/2}}{T^2c}\right)^{-r}, \quad
  \textrm{for}\ r=0,1,\ldots,2n_0+1,
\]
for $y\asymp N$.
There is a stationary phase point $y_0=x^2c^6/\delta^3$ such that $\phi'(y_0)=0$.
To apply Lemma \ref{lemma: MSY}, set
\begin{equation}\label{eqn: MTNU}
  M_0=10N, \quad T_0=\frac{\sqrt{\delta N}}{c}, \quad
  N_0=\frac{\delta^{1/2}N^{3/2}}{T^2c}, \quad U_0=N^{-13/12}.
\end{equation}
Note that $\phi''(y)\gg T_0M_0^{-2}$ for $y\in[\frac{1}{4}x^2 c^6/\delta^3,\frac{9}{2}x^2 c^6/\delta^3]$,
and the condition $N_0\geq M_0^{1+\ve}/\sqrt{T_0}$ is implied by our assumption
$c\ll C$ when $M\gg T^{1/3+2\ve}$.

We are ready to apply  Lemma \ref{lemma: MSY} (where we take $n=n_0$).
The main term of the integral in \eqref{eqn: int a} is
\begin{equation}   \label{eqn: mainterm}
  \frac{e(\phi(y_0)+ {1}/{8})}{\sqrt{\phi^{''}(y_0)}}
  \Big( a(y_0) + \sum_{j=1}^{n_0}\varpi_{2j}\frac{(-1)^{j}(2j-1)!!}{(4\pi i\lambda_2)^j}
  \Big),
\end{equation}
where $\lambda_2 = |\phi''(y_0)|/{2}$.
Notice we have used
\[
  \gamma - \alpha  \asymp \beta - \gamma \asymp  M_0 ,
\]
with $\alpha = \frac{1}{4}x^2 c^6/\delta^3$, $\beta = \frac{9}{2}x^2 c^6/\delta^3$,
and $\gamma = y_0 = x^2 c^6/\delta^3$.
To save time in estimates, notice that there are no boundary terms here.
That is, the terms related to $H_i$ in Lemma \ref{lemma: MSY} will vanish.
This is due to the compact support of $a$, with itself and all of its derivatives zero at
$\frac{1}{4}x^2 c^6/\delta^3$ and $\frac{9}{2}x^2 c^6/\delta^3$.
The sum of the four error terms in Lemma \ref{lemma: MSY} can be simplified to
(using \eqref{eqn: C} and \eqref{eqn: MTNU})
\begin{equation}\label{eqn: ET1}
  \begin{split}
     O\left( \frac{U_0 M_0^{2n_0 +2}}{T_0^{n_0 + 1} N_0^{2n_0 +1}} \right)
    & = O\left( c^{3n_0+2}T^{4n_0+2}N^{-\frac{3}{2}n_0-\frac{13}{12}} \delta^{-\frac{3}{2}n_0-1}\right) \\
    & = O\left( c^{-\frac{1}{2}} \delta^{\frac{1}{4}} N^{\frac{1}{6}} T^{-\frac{1}{2}} M^{-\frac{5}{2}} (TM^{-3})^{n_0} T^\varepsilon \right)
     = O\left( c^{-\frac{1}{2}} \delta^{\frac{1}{4}} N^{\frac{1}{6}} T^{-\frac{3}{2}} \right),
  \end{split}
\end{equation}
provided that $T^{\frac{1}{3}+\ve} \ll M \ll T^{\frac{1}{2}}$ and $n_0 > 1/\ve$.
Note that here we should let the $\ve$ in the upper bound of $C$
be much smaller than the $\ve$ in the lower bound of $M$.
This estimate uses the current assumptions on $c$, and the size of $N$ compared to $q$ and $T$.
Note that
$$
  M_0 \gg N_0, \quad \textrm{if} \quad   q\ll T^{1/4}.
$$

We now need to deal with the $\varpi_{2j}$ terms in \eqref{eqn: mainterm}.
Recall the expression for $\varpi_{2j}$ in equation \eqref{eqn: varpi_k}.
Here we take $2 \leq 2j \leq 2n_0$.
One can see from \eqref{eqn: varpi_k} that the main term from $\varpi_{2j}$ is $a^{(2j)}(y_0)$.
(Here $a$ and $\phi$ take the place of $g$ and $f$ in Lemma \ref{lemma: MSY}.
Further $y_0$ takes the place of $\gamma$.)
Using the above estimates, we have
\begin{equation}
  \varpi_{2j} - \frac{a^{(2j)}(y_0)}{(2j)!} = O \left(\frac{U_0}{M_0 N_0^{2j-1}}\right).
\end{equation}
The constant ultimately depends on $n_0$ and we have used $M_0 \gg N_0$.
To estimate this error term contribution to $\tilde{\mathcal{R}}$,
we must divide by $\lambda_2^{j+ 1/2}$ and sum over $j$. (See (\ref{eqn: mainterm}).)
Since $y_0 \asymp N$, we have
 $\lambda_2 \asymp \frac{\delta^{1/2}}{cN^{3/2}}$.
We have then that this contribution is
(using \eqref{eqn: C} and \eqref{eqn: MTNU})
\begin{equation}\label{eqn: ET2}
  \begin{split}
    O\left( N^{-\frac{25}{12}}
          \left(\frac{T^2 c}{\delta^{1/2}N^{\frac{3}{2}}}\right)^{2j-1}
          \left(\frac{cN^{\frac{3}{2}}}{\delta^{1/2}}\right)^{j + \frac{1}{2}}  \right)
    & =
     O\left(  c^{3j - \frac{1}{2}} T^{4j-2}
     N^{-\frac{3}{2}j + \frac{1}{6}} \delta^{-\frac{3}{2}j+\frac{1}{4}}   \right) \\
    & = O\left( c^{-\frac{1}{2}} \delta^{\frac{1}{4}} N^{\frac{1}{6}} T^{-2}  (TM^{-3})^{j} T^\varepsilon \right) \\
    & = O\left( c^{-\frac{1}{2}} \delta^{\frac{1}{4}} N^{\frac{1}{6}} T^{-\frac{3}{2}} \right).
  \end{split}
\end{equation}

We must now estimate the $a^{(2j)}(y_0)$ term in $\varpi_{2j}$ in \eqref{eqn: mainterm}.
Let $i_1$ be the number of times $v\left(\frac{y}{N}\right)$ is differentiated
plus the number of times $y^{-{13}/{12}-u}$ is differentiated.
So at every differentiation either the factor $\frac{1}{N}$ comes out,
or up to a constant, the factor $\frac{1}{y}$ comes out.
Notice that $\frac{1}{y} \asymp \frac{1}{N}$.
Let $i_2$ be the number of times $\widehat{k}\Big(\frac{MTc}{2 \pi^2 \sqrt{\delta y}}\Big)$
is differentiated, and denote $i_3$ to be the number of times
$e\Big(\frac{-T^2c}{4 \pi^2 \sqrt{\delta y}}\Big)$ is differentiated.
Then $i_1 + i_2 + i_3 = 2j$, and neglecting coefficients (which ultimately depend on $n_0$),
$a^{(2j)}(y_0)$ is the sum over all combinatorial possibilities of
\[
  N^{-\frac{13}{12}-i_1}
  \left( \frac{MTc}{\delta^{1/2}N^{\frac{3}{2}}} \right)^{i_2}
  \left( \frac{T^2 c}{\delta^{1/2}N^{\frac{3}{2}}} \right)^{i_3}.
\]
The main term is when $i_3 = 2j$ and we will estimate this separately, below.
So we can assume in all terms, now, that
$i_1 + i_2 \geq 1$.
To estimate this error term, which is all but one term in $a^{(2j)}(y_0)$, as before, in
\eqref{eqn: mainterm}, we must divide by $\lambda_2^{j + \frac{1}{2}}$ where
$\lambda_2 \asymp \frac{\delta^{1/2}}{cN^{3/2}}$ with our assumption on $y_0$.
We have then a sum of error terms which are all
\[
  O\left( M^{i_2} c^{j + i_2 + i_3 + \frac{1}{2}} T^{i_2 + 2i_3}
  N^{\frac{3}{2}j -i_1 -\frac{3}{2}i_2 -\frac{3}{2}i_3 - \frac{1}{3}}
  \delta^{-\frac{1}{2}(j+i_2+i_3+\frac{1}{2})} \right).
\]
Using $i_3=2j-i_1-i_2$ and $i_1+i_2\geq 1$, the above bound will be
\begin{equation}\label{eqn: ET3}
  \begin{split}
    & O\left( M^{i_2} c^{3j - i_1 + \frac{1}{2}} T^{4j-2i_1-i_2}
  N^{-\frac{3}{2}j -\frac{5}{2}i_1 - \frac{1}{3}}
  \delta^{-\frac{3}{2}j+\frac{1}{2}i_1-\frac{1}{4}} \right) \\
    & \hskip 10pt = O\left( c^{-\frac{1}{2}} \delta^{\frac{1}{4}} N^{\frac{1}{6}} T^{-1} M^{-1} (MT^{-1})^{i_2} (MT^{-1}N^{-3})^{i_1} (TM^{-3})^{j} T^\varepsilon \right)
     = O\left( c^{-\frac{1}{2}} \delta^{\frac{1}{4}} N^{\frac{1}{6}} T^{-\frac{3}{2}} \right).
  \end{split}
\end{equation}
We will bound the contribution of all these error terms
to $\tilde{\cR}$ in the next section.

This leaves the main term of $a^{(2j)}(y_0)$ (where $i_3 = 2j$ and $i_1 = i_2 = 0$) which is
\begin{equation}\label{eqn: a2jalpha}
   a_{2j}(y_0) := \alpha_j  \Big( \frac{T^2c}{\delta^{1/2}y_0^{3/2}} \Big)^{2j}
        v\Big(\frac{y_0}{N}\Big)   \widehat{k}\Big(\frac{MTc}{2 \pi^2 \sqrt{\delta y_0}}\Big)
        e\Big(\frac{-T^2c}{4 \pi^2 \sqrt{\delta y_0}}\Big)  y_0^{-{13}/{12}-u},
\end{equation}
where the constant $\alpha_j$ depends on $j$ which
ultimately can be bounded in terms of $n_0$.
We cannot bound these terms trivially as Li~\cite[\S4]{li2011bounds} did.
Instead, we will apply the Voronoi formula a second time.
This will be done in \S\ref{sec: MT}.

\section{Contribution from the error terms}\label{sec: ET}

In this section, we will bound the contribution
of these error terms \eqref{eqn: ET1}, \eqref{eqn: ET2},
and \eqref{eqn: ET3} to $\tilde{\cR}$, see \eqref{eqn: tilde cR}.
To do this, we need to recall the result of Blomer~\cite{blomer2012subconvexity}
for $\cT_{c_1,n_1,n_2}^{\pm,\delta}(c,q)$.

\begin{lemma}\label{lemma: cT}
  Assume $(q,\delta)=1$, $\delta_0|c_2$, $(r',c_2)=1$. Then
  \begin{equation*}
    \begin{split}
       \cT_{c_1,n_1,n_2}^{\pm,\delta}(c,q) & =
           e\left(\mp\frac{n_1^2n_2(c_2')^2c_2}{c'\delta'}\right)
           \frac{\varphi(c_1)\varphi(c_1/n_1)}{\varphi(c')^2}
           \frac{\mu(\delta_0)\chi(\delta)}{\delta_0}
           r^2 q \chi(-1) \\
         & \hskip 20pt \cdot \sum_{\substack{f_1f_2d_2'=r'\\(d_2',f_1n_1n_2)=1\\
                                    (f_1,f_2)=1,\mu^2(f_1)=1\\(f_1f_2,q)=1,f_2|n_1}}
         \frac{\mu(f_2)}{f_1} e\left(\pm\frac{(n_1'c_2')^2f_2n_2\delta_0\overline{d_2'c_1'}}{f_1\delta'}\right)
         \cV_{c_1,n_1,n_2}^{\pm,\delta}(c,q),
    \end{split}
  \end{equation*}
  where
  \begin{equation*}
    \begin{split}
       \cV_{c_1,n_1,n_2}^{\pm,\delta}(c,q) & := \sum_{g_5,g_6(q)}\chi(g_5g_6)
       \chi(g_5r'+c_2g_6r'\mp c_2n_2n_1c_2')
        \chi(r'g_6\mp n_2n_1c_2')e\left(\bar{\delta'}n_1c_2'\frac{g_5+c_2g_6}{q}\right).
    \end{split}
  \end{equation*}
  Furthermore, we have
  \begin{equation*}
    \begin{split}
       \cV_{c_1,n_1,n_2}^{\pm,\delta}(c,q) & = \chi_h(-1)\frac{h}{\varphi(k)}
            R_k(n_2n_1c_2')R_k(c_2)R_k(n_1c_2')
            H(\mp\overline{r'hk}n_2(n_1c_2')^2c_2\overline{\delta'},\ell).
    \end{split}
  \end{equation*}
  Recall that $h,k,\ell$ are defined in \eqref{eqn: h,k,l}, and
  the relations of the variables are summarized in \eqref{eqn: relations}.
  If one of the conditions $\delta_0|c_2$ and $(r',c_2)=1$ is not
  satisfied, then $\cT_{c_1,n_1,n_2}^{\pm,\delta}(c,q)=0$.
\end{lemma}

\begin{proof}
  See Blomer~\cite[Lemma 12, eq. (50)]{blomer2012subconvexity},
  and the expression for the two exponentials given in the second display
  of~\cite[\S7]{blomer2012subconvexity}.
\end{proof}

By the definition of $\cV_{c_1,n_1,n_2}^{\pm,\delta}(c,q)$,
we have the following trivial bound
$$|\cV_{c_1,n_1,n_2}^{\pm,\delta}(c,q)|\leq q^2.$$
A trivial estimate shows
\[
  \cT_{c_1,n_1,n_2}^{\pm,\delta}(c,q) \ll \frac{\delta_0r^2q^3}{c_2^2n_1}\tau_3(c).
\]
The contribution to \eqref{eqn: tilde cR} from the error terms is bounded by
\[
  \begin{split}
     E & = \sum_{\substack{q|c\\c\ll C}}\frac{1}{c^{\frac{3}{2}}}
     \sum_{c_1|c}c_1 \sum_{n_1|c_1}\sum_{n_2} \frac{|A(n_2,n_1)|}{n_1n_2}
     \frac{\delta_0r^2q^3}{c_2^2n_1}\tau_3(c)
     \left(\frac{n_1^2n_2}{c_1^3}\right)^{\frac{2}{3}}
     c^{-\frac{1}{2}} \delta^{\frac{1}{4}} N^{\frac{1}{6}} T^{-\frac{3}{2}} \\
     & \leq (qT)^\ve T^{-\frac{3}{2}} N^{\frac{1}{6}} \delta^{\frac{1}{4}}
     q \sum_{\substack{q|c\\c\ll C}} \frac{(\delta,c/q)}{c^{2}}
     \sum_{c_1|c} c_1 \sum_{n_1|c_1} \frac{1}{n_1^{\frac{2}{3}}}
     \sum_{n_2} \frac{|A(n_2,n_1)|}{n_2^{\frac{1}{3}}},
  \end{split}
\]
where $n_2$ satisfies \eqref{eqn: x&n_2}.
By \eqref{eqn: C} and \eqref{eqn: N}, we have
\begin{equation}\label{eqn: error2E}
  E \ll (qT)^\ve \delta^{\frac{1}{4}}  N^{\frac{1}{6}} T^{-\frac{3}{2}} (\delta N)^{\frac{1}{3}}
     \sum_{\substack{r\ll C/q}} \frac{(\delta,r)}{r}  \\
    \ll (qT)^\ve (\delta N)^{\frac{1}{2}} T^{-\frac{3}{2}}
    \ll  (qT)^\ve q^{\frac{3}{2}}.
\end{equation}
We finish the estimate of these error terms.

\section{Completion of the proof of Theorem \ref{thm: t} and Theorem \ref{thm: main}}\label{sec: MT}
\label{sec: thm t}

In this section, we will give the proof of Theorem \ref{thm: t} and Theorem \ref{thm: main}.
At first, we will estimate the contribution to $\tilde{\cR}$ of $a_{2j}(y_0)$
in \eqref{eqn: tilde cR}. To bound this, we only need to estimate
\begin{equation}\label{eqn: tilde cR j}
   \begin{split}
     \tilde{\cR}_j =\tilde{\cR}_j(q,N;\delta)
     & := \sum_{\pm}\sum_{\substack{q|c\\c\ll C}}\frac{1}{c^{3/2}}
     \sum_{c_1|c}c_1 \sum_{n_1|c_1}\sum_{n_2\geq1} \frac{A(n_2,n_1)}{n_1n_2}
     x^{2/3} \frac{a_{2j}(y_0)}{\lambda_2^{j+1/2}}
     e(\phi(y_0))\cT_{c_1,n_1,n_2}^{\pm,\delta}(c,q),
   \end{split}
\end{equation}
for all $0\leq j\leq n_0$, where $x=\frac{n_1^2n_2}{c_1^3}$, $y_0=\frac{x^2c^6}{\delta^3}$,
and $\lambda_2=\frac{\delta^{1/2}}{12cy_0^{3/2}}$.
Inserting these values,
together with \eqref{eqn: a2jalpha}, we have
\begin{equation}
   \begin{split}
     \tilde{\cR}_j & = 12^{j+1/2}\alpha_j T^{4j} \delta^{3j+3/4+3u}
     \sum_{\pm}\sum_{\substack{q|c\\c\ll C}}\frac{1}{c^{6j+3+6u}}
     \sum_{c_1|c}c_1^{9j+1+6u} \sum_{n_1|c_1}\frac{1}{n_1^{6j+1+4u}}
      \\
     & \hskip 60pt \cdot \sum_{n_2\geq1} \frac{A(n_2,n_1)}{n_2^{3j+1+2u}}
        v\Big(\frac{n_1^4n_2^2c^6}{c_1^6\delta^3N}\Big)
        \widehat{k}\Big(\frac{MT\delta c_1^3}{2 \pi^2 n_1^2n_2c^2}\Big)
        e\Big(\frac{-T^2\delta c_1^3}{4 \pi^2 n_1^2n_2c^2}\Big)
        \check{\cT}_{c_1,n_1,n_2}^{\pm,\delta}(c,q),
   \end{split}
\end{equation}
where
\begin{equation}
  \check{\cT}_{c_1,n_1,n_2}^{\pm,\delta}(c,q):=
  e\Big(\pm\frac{n_1^2n_2c^2}{c_1^3\delta}\Big)\cT_{c_1,n_1,n_2}^{\pm,\delta}(c,q).
\end{equation}
By Lemma \ref{lemma: cT}, after some simplification, we have
\begin{equation}\label{eqn: tilde cR_j=}
  \begin{split}
     \tilde{\cR}_j
     & = \frac{\gamma_j T^{4j} \delta^{3j+3/4+3u}}{q}
        \sum_{\pm}\sum_{\delta_0\delta'=\delta}
        \frac{\mu(\delta_0)\chi(\delta)}{\delta_0^{6j+2+6u}}
        \sum_{c_1'c_2'=q} \frac{(c_1')^{3j}}{(c_2')^{6j+1+6u}} \\
     & \hskip 20pt \cdot \sum_{\substack{f_1f_2d_2'\ll C/(q\delta_0) \\ (f_1f_2d_2',c_2'\delta)=1
                         \\(f_1,f_2)=1,\mu^2(f_1)=1\\(f_1f_2,qd_2')=1}}
         \frac{\mu(f_2) f_1^{3j} (d_2')^{3j}}{f_2^{3j+4u}f_1f_2}
        \sum_{\substack{n_1'|f_1c_1'\\(n_1',d_2')=1}} \frac{1}{(n_1')^{6j+1+4u}}
        \frac{\varphi(f_1f_2d_2'c_1')\varphi(f_1d_2'c_1'/n_1')}{\varphi(f_1f_2d_2'c_1'c_2')^2} \\
     & \hskip 40pt \cdot  \sum_{\substack{n_2\\(n_2,d_2')=1}}
        \frac{A(n_2,n_1'f_2)}{n_2^{3j+1+2u}}
        e\left(\pm\frac{(n_1'c_2')^2f_2n_2\delta_0\overline{d_2'c_1'}}{f_1\delta'}\right)
        \frac{h\chi_h(-1)}{\varphi(k)}  \\
     & \hskip 60pt \cdot   R_k(n_2n_1'f_2c_2')R_k(c_2'\delta_0)R_k(n_1'f_2c_2')
        H(\mp\overline{f_1d_2'hk}n_2(n_1'c_2')^2f_2c_2'\delta_0\overline{\delta'},\ell) \\
     & \hskip 80pt \cdot  v\Big(\frac{n_1^4n_2^2(c_2')^6\delta_0^3}{(\delta')^3N}\Big)
        \widehat{k}\Big(\frac{MTc_1'f_1d_2'\delta'}{2 \pi^2 \delta_0f_2(c_2'n_1')^2n_2}\Big)
        e\Big(\frac{-T^2c_1'f_1d_2'\delta'}{4 \pi^2  \delta_0f_2(c_2'n_1')^2n_2}\Big),
   \end{split}
\end{equation}
where $\gamma_j=12^{j+\frac{1}{2}}\chi(-1)\alpha_j$.
Recall that the relations of the new variables and the old variables are
\begin{equation}\label{eqn: var}
  c=c_1'f_1f_2d_2'c_2'\delta_0, \quad
  c_1=c_1'f_1f_2d_2',\quad
  c_2=c_2'\delta_0,\quad
  r=f_1f_2d_2'\delta_0,\quad
  n_1=n_1'f_2.
\end{equation}
Note that $u\in[\ve-i\log^2(qT),\ve+i\log^2(qT)]$, so the appearance
of $u$ in the exponents is harmless.
As in \S\ref{sec: thm q}, we have several cases to handle.
Since all these cases are similar, we will only deal with the main
case, that is
%
%
the case $c_1'=q,\ c_2'=h=k=1$.
Denote these terms in \eqref{eqn: tilde cR_j=} as $\tilde{\cR}_j^\dag$.
Note that by \eqref{eqn: h,k,l} we have $(d_2'n_1'n_2,q)=1$. Write $f_1=n_1'g$.  Then we have
\begin{equation}\label{eqn: R_j^d}
  \begin{split}
     \tilde{\cR}_j^\dag
     & = \gamma_j q^{3j-1} T^{4j} \delta^{3j+3/4+3u}
        \sum_{\pm}\sum_{\delta_0\delta'=\delta}
        \frac{\mu(\delta_0)\chi(\delta)}{\delta_0^{6j+2+6u}}
        \sum_{\substack{gn_1'f_2\ll C/(q\delta_0) \\ (gn_1'f_2,q\delta)=1
                         \\(gn_1',f_2)=1,\mu^2(gn_1')=1}}
         \\
     & \hskip 30pt \cdot  \frac{\mu(f_2) g^{3j-1} }{f_2^{3j+1+4u}\varphi(n_1'f_2)(n_1')^{3j+2+4u}}
        \sum_{\substack{d_2'\ll C/(q\delta_0gn_1'f_2)\\(d_2',qgn_1'f_2\delta)=1}}
        (d_2')^{3j} \\
     & \hskip 60pt \cdot  \sum_{\substack{n_2\\(n_2,d_2')=1}}
        \frac{A(n_2,n_1'f_2)}{n_2^{3j+1+2u}}
        e\left(\pm\frac{n_1'f_2\delta_0\overline{d_2'q}n_2}{g\delta'}\right)
        H(\mp\overline{gd_2'}n_1'f_2\delta_0\overline{\delta'}n_2,q) \\
     & \hskip 90pt \cdot   v\Big(\frac{(n_1'f_2)^4\delta_0^3n_2^2}{(\delta')^3N}\Big)
        \widehat{k}\Big(\frac{MTqgd_2'\delta'}{2 \pi^2 \delta_0f_2n_1'n_2}\Big)
        e\Big(\frac{-T^2qgd_2'\delta'}{4 \pi^2 \delta_0f_2n_1'n_2}\Big).
   \end{split}
\end{equation}
To remove the coprime condition $(n_2,d_2')=1$,
we split the $n_2$-sum into residue classes mod $d_2'$,
and then detect the summation congruence condition by
additive characters mod $d_2'$, getting
\begin{equation*}
  \begin{split}
     \tilde{\cR}_j^\dag
     & \ll (qT)^\ve q^{3j-1} T^{4j} \delta^{3j+3/4}
        \sum_{\pm}\sum_{\delta_0\delta'=\delta} \frac{1}{\delta_0^{6j+2}}
        \sum_{\substack{gn_1'f_2\ll C/(q\delta_0) \\ (gn_1'f_2,q\delta)=1
                         \\(gn_1',f_2)=1,\mu^2(gn_1')=1}}
         \\
     & \hskip 30pt \cdot \frac{g^{3j-1}}{f_2^{3j+2}(n_1')^{3j+3}}
        \sum_{\substack{d_2'\ll C/(q\delta_0gn_1'f_2)\\(d_2',qgn_1'f_2\delta)=1}}
        (d_2')^{3j}  \Bigg|
        \underset{a_1(d_2')}{{\sum}^*} \frac{1}{d_2'}
        \sum_{b_1(d_2')}e\left(-\frac{b_1a_1}{d_2'}\right)\\
     & \hskip 60pt \cdot \sum_{n_2}
        \frac{A(n_2,n_1'f_2)}{n_2^{3j+1+2u}} e\left(\frac{b_1n_2}{d_2'}\right)
        e\left(\pm\frac{n_1'f_2\delta_0\overline{d_2'q}n_2}{g\delta'}\right)
        H(\mp\overline{gd_2'}n_1'f_2\delta_0\overline{\delta'}n_2,q) \\
     & \hskip 90pt \cdot  v\Big(\frac{(n_1'f_2)^4\delta_0^3n_2^2}{(\delta')^3N}\Big)
        \widehat{k}\Big(\frac{MTqgd_2'\delta'}{2 \pi^2 \delta_0f_2n_1'n_2}\Big)
        e\Big(\frac{-T^2qgd_2'\delta'}{4 \pi^2 \delta_0f_2n_1'n_2}\Big)\Bigg|.
   \end{split}
\end{equation*}
Now by \eqref{eqn: H(w;q)}, \eqref{eqn: H to H^*}, and \eqref{eqn: H^*=},
we split on $q_2=1$ or $q_2=q$, getting
\begin{equation}\label{eqn: cR_j^d=}
  \tilde{\cR}_j^\dag \ll \tilde{\cR}_j^{\dag,1}+\tilde{\cR}_j^{\dag,2},
\end{equation}
where
\begin{equation}\label{eqn: cR_j^d1}
  \begin{split}
     \tilde{\cR}_j^{\dag,1}
     & := (qT)^\ve q^{3j-1} T^{4j} \delta^{3j+3/4}
        \sum_{\pm}\sum_{\delta_0\delta'=\delta} \frac{1}{\delta_0^{6j+2}}
        \sum_{\substack{gn_1'f_2\ll C/(q\delta_0) \\ (gn_1'f_2,q\delta)=1
                         \\(gn_1',f_2)=1,\mu^2(gn_1')=1}}
         \\
     & \hskip 30pt \cdot \frac{g^{3j-1} }{f_2^{3j+2}(n_1')^{3j+3}}
        \sum_{\substack{d_2'\ll C/(q\delta_0gn_1'f_2)\\(d_2',qgn_1'f_2\delta)=1}}
        (d_2')^{3j}
        \frac{1}{d_2'}  \sum_{b_1(d_2')} |S(0,-b_1;d_2')| \\
     & \hskip 60pt \cdot \Bigg| \sum_{n_2}
        \frac{A(n_2,n_1'f_2)}{n_2^{3j+1+2u}} e\left(\frac{b_1n_2}{d_2'}\right)
        e\left(\pm\frac{n_1'f_2\delta_0\overline{d_2'q}n_2}{g\delta'}\right)\\
     & \hskip 90pt \cdot  v\Big(\frac{(n_1'f_2)^4\delta_0^3n_2^2}{(\delta')^3N}\Big)
        \widehat{k}\Big(\frac{MTqgd_2'\delta'}{2 \pi^2 \delta_0f_2n_1'n_2}\Big)
        e\Big(\frac{-T^2qgd_2'\delta'}{4 \pi^2 \delta_0f_2n_1'n_2}\Big)\Bigg|,
   \end{split}
\end{equation}
and
\begin{equation}\label{eqn: cR_j^d2}
  \begin{split}
     \tilde{\cR}_j^{\dag,2}
     & := (qT)^\ve q^{3j+\frac{1}{2}} T^{4j} \delta^{3j+3/4}
        \sum_{\pm}\sum_{\delta_0\delta'=\delta} \frac{1}{\delta_0^{6j+2}}
        \sum_{\substack{gn_1'f_2\ll C/(q\delta_0) \\ (gn_1'f_2,q\delta)=1
                         \\(gn_1',f_2)=1,\mu^2(gn_1')=1}}
         \\
     & \hskip 30pt \cdot \frac{g^{3j-1} }{f_2^{3j+2}(n_1')^{3j+3}}
        \sum_{\substack{d_2'\ll C/(q\delta_0gn_1'f_2)\\(d_2',qgn_1'f_2\delta)=1}}
        (d_2')^{3j}
        \frac{1}{d_2'}  \sum_{b_1(d_2')} |S(0,-b_1;d_2')| \\
     & \hskip 60pt \cdot \frac{1}{\varphi(q)}\sum_{\psi(q)} \Bigg| \sum_{n_2}
        \frac{A(n_2,n_1'f_2)}{n_2^{3j+1+2u}} e\left(\frac{b_1n_2}{d_2'}\right)
        e\left(\pm\frac{n_1'f_2\delta_0\overline{d_2'q}n_2}{g\delta'}\right)
        \psi(n_2)\\
     & \hskip 120pt \cdot  v\Big(\frac{(n_1'f_2)^4\delta_0^3n_2^2}{(\delta')^3N}\Big)
        \widehat{k}\Big(\frac{MTqgd_2'\delta'}{2 \pi^2 \delta_0f_2n_1'n_2}\Big)
        e\Big(\frac{-T^2qgd_2'\delta'}{4 \pi^2 \delta_0f_2n_1'n_2}\Big)\Bigg|.
   \end{split}
\end{equation}
We will focus on $\tilde{\cR}_j^{\dag,2}$, since it turns out that
$\tilde{\cR}_j^{\dag,1}$ is easier and has a better upper bound.
At first we need to remove the factor $\psi(n_2)$ in the innermost
sum of \eqref{eqn: cR_j^d2}. Again, we
split the $n_2$-sum into residue classes mod $q$,
and then detect the summation congruence condition by
additive characters mod $q$, getting
\begin{equation*}
  \begin{split}
     \tilde{\cR}_j^{\dag,2}
     & = (qT)^\ve q^{3j+\frac{1}{2}} T^{4j} \delta^{3j+3/4}
        \sum_{\pm}\sum_{\delta_0\delta'=\delta} \frac{1}{\delta_0^{6j+2}}
        \sum_{\substack{gn_1'f_2\ll C/(q\delta_0) \\ (gn_1'f_2,q\delta)=1
                         \\(gn_1',f_2)=1,\mu^2(gn_1')=1}}
        \frac{g^{3j-1} }{f_2^{3j+2}(n_1')^{3j+3}} \\
     & \hskip 30pt \cdot \sum_{\substack{d_2'\ll C/(q\delta_0gn_1'f_2)\\(d_2',qgn_1'f_2\delta)=1}}
        \frac{(d_2')^{3j}}{d_2'}  \sum_{b_1(d_2')} |S(0,-b_1;d_2')|
        \frac{1}{\varphi(q)}\sum_{\psi(q)} \frac{1}{q}\sum_{b_2(q)} |S_\psi(0,-b_2;q)| \\
     & \hskip 60pt \cdot  \Bigg|  \sum_{n_2}
        \frac{A(n_2,n_1'f_2)}{n_2^{3j+1+2u}} e\left(\frac{b_1n_2}{d_2'}\right)e\left(\frac{b_2n_2}{q}\right)
         e\left(\pm\frac{n_1'f_2\delta_0\overline{d_2'q}n_2}{g\delta'}\right)\\
     & \hskip 120pt \cdot
        v\Big(\frac{(n_1'f_2)^4\delta_0^3n_2^2}{(\delta')^3N}\Big)
        \widehat{k}\Big(\frac{MTqgd_2'\delta'}{2 \pi^2 \delta_0f_2n_1'n_2}\Big)
        e\Big(\frac{-T^2qgd_2'\delta'}{4 \pi^2 \delta_0f_2n_1'n_2}\Big)\Bigg|,
   \end{split}
\end{equation*}
where
\[
  S_\psi(m,n;c)=\underset{d(c)}{{\sum}^*} \psi(d) e\left(\frac{m\bar{d}+nd}{c}\right)
\]
is the Kloosterman sum with character $\psi$.
Note that $S(0,-b_1;d_2')$ is related to the Ramanujan sum,
and $S_\psi(0,-b_2;q)$ is related to the Gauss sum,
inserting the upper bound for these sums
\[
  |S(0,-b_1;d_2')|\ll (\gcd(b_1,d_2'))^{1+\varepsilon}
  \quad \textrm{and} \quad
  |S_\psi(0,-b_2;q)| \leq \sqrt{q},
\]
we have
\begin{equation}\label{eqn: cR_j^d2<<}
  \begin{split}
     & \tilde{\cR}_j^{\dag,2}
       \ll (qT)^\ve q^{3j+1} T^{4j} \delta^{3j+3/4}
        \sum_{\pm}\sum_{\delta_0\delta'=\delta} \frac{1}{\delta_0^{6j+2}}
        \sum_{\substack{gn_1'f_2\ll C/(q\delta_0) \\ (gn_1'f_2,q\delta)=1
                         \\(gn_1',f_2)=1,\mu^2(gn_1')=1}}
        \frac{g^{3j-1} }{f_2^{3j+2}(n_1')^{3j+3}}\\
     & \hskip 50pt \cdot
        \sum_{\substack{d_2'\ll C/(q\delta_0gn_1'f_2)\\(d_2',qgn_1'f_2\delta)=1}}
        \frac{(d_2')^{3j}}{d_2'}  \sum_{b_1(d_2')} (b_1,d_2')
        \frac{1}{q}\sum_{b_2(q)}
        \bigg|  \sum_{n_2}
        \frac{A(n_2,n_1'f_2)}{n_2^{3j+1+2u}} \\
     & \hskip 5pt \cdot
     e\left(\frac{(b_1qg\delta'+b_2d_2'g\delta'\pm n_1'f_2\delta_0)n_2}{d_2'qg\delta'}\right)
        v\Big(\frac{(n_1'f_2)^4\delta_0^3n_2^2}{(\delta')^3N}\Big)
        \widehat{k}\Big(\frac{MTqgd_2'\delta'}{2 \pi^2 \delta_0f_2n_1'n_2}\Big)
        e\Big(\frac{-T^2qgd_2'\delta'}{4 \pi^2 \delta_0f_2n_1'n_2}\Big)\bigg|.
   \end{split}
\end{equation}

Now we will handle the inner $n_2$-sum, that is,
\begin{equation}\label{eqn: n_2-sum}
  \sum_{n_2} A(n_2,n_1'f_2)  e\left(\frac{b'n_2}{c'}\right) w_j(n_2),
\end{equation}
where
\begin{equation}\label{eqn: w_j}
  w_j(y) := \frac{1}{y^{3j+1+2u}} v\Big(\frac{(n_1'f_2)^4\delta_0^3y^2}{(\delta')^3N}\Big)
        \widehat{k}\Big(\frac{MTqgd_2'\delta'}{2 \pi^2 \delta_0f_2n_1'y}\Big)
        e\Big(\frac{-T^2qgd_2'\delta'}{4 \pi^2 \delta_0f_2n_1'y}\Big),
\end{equation}
and
\begin{equation}\label{eqn: b'/c'}
  \frac{b'}{c'} := \frac{b_1qg\delta'+b_2d_2'g\delta'\pm n_1'f_2\delta_0}{d_2'qg\delta'},
  \quad \textrm{with}\quad
  (b',c')=1, \quad \textrm{and}\quad
  c'| d_2'qg\delta'.
\end{equation}
We apply the Voronoi formula on $GL(3)$ a second time, getting
\begin{equation}\label{eqn: VSF2}
  \begin{split}
     & \sum_{n_2} A(n_2,n_1'f_2)  e\left(\frac{b'n_2}{c'}\right) w_j(n_2) \\
       & \hskip 30pt = \frac{c'\pi^{3/2}}{2} \sum_{\pm} \sum_{l_1|c'n_1'f_2} \sum_{l_2=1}^{\infty}
              \frac{A(l_2,l_1)}{l_1l_2} S\left(n_1'f_2\bar{b'},\pm l_2;\frac{n_1'f_2c'}{l_1}\right)
              \cW_j^{\pm}\left(\frac{l_1^2l_2}{(c')^3n_1'f_2}\right),
  \end{split}
\end{equation}
where $\cW_j^\pm$ is defined by \eqref{eqn: Psi} with $\psi=w_j$.
By the support of $v$, we know $w_j$ is supported in
$\big[Y,\sqrt{2}Y\big]$, with
$$
  Y:=\frac{\sqrt{\delta^3 N}}{(n_1'f_2)^2\delta_0^3} \geq 1.
$$
By the facts $c'\leq d_2'qg\delta'$ and $q\delta_0gn_1'f_2d_2'\ll C$,
the bounds for $N$ and $C$, i.e., \eqref{eqn: N} and \eqref{eqn: C},
and the bounds for $q$ and $M$, i.e., \eqref{eqn: q&M},
writing $x=\frac{l_1^2l_2}{(c')^3n_1'f_2}$,
we have
\[
  xY=\frac{l_1^2l_2}{(c')^3n_1'f_2} \frac{\sqrt{\delta^3 N}}{(n_1'f_2)^2\delta_0^3}
  \gg \frac{\sqrt{\delta^3 N}}{C^3(\delta')^3} \gg \frac{M^3T^{-\ve}}{q^3} \gg T^{\ve}.
\]
Now by Lemma \ref{lemma: Psi=M+O}, we have
\[
    \cW_j^\pm(x) = x\int_0^\infty w_j(y) \sum_{\ell=1}^{K} \frac{\gamma_\ell}{(xy)^{\ell/3}}
    e\left(\pm3(xy)^{1/3}\right) dy + O\left(T^{-A}\right),
\]
for some large $K$ and $A$.
We will only deal with the term with $\ell=1$,
since the others can be handled similarly.
By \eqref{eqn: w_j}, we are led to estimate
\begin{equation*}
      \cW_{j,0}^+(x) := x^{2/3} \int_0^\infty b(y)  e\left(\phi_1(y)\right) dy,
      \quad \textrm{and}\quad
      \cW_{j,0}^-(x) := x^{2/3} \int_0^\infty b(y)  e\left(\phi_2(y)\right) dy,
\end{equation*}
where
\[
  b(y) := \frac{1}{y^{3j+\frac{4}{3}+2u}}
        v\Big(\frac{(n_1'f_2)^4\delta_0^3y^2}{(\delta')^3N}\Big)
        \widehat{k}\Big(\frac{MTqgd_2'\delta'}{2 \pi^2 \delta_0f_2n_1'y}\Big),
\]
and
\[
     \phi_1(y) := -\frac{T^2qgd_2'\delta'}{4 \pi^2 \delta_0f_2n_1'y} + 3(xy)^{1/3},
      \quad \textrm{and}\quad
     \phi_2(y) := -\frac{T^2qgd_2'\delta'}{4 \pi^2 \delta_0f_2n_1'y} - 3(xy)^{1/3}.
\]
By the support of $v$, we have
\begin{equation*}
  \int_0^\infty b(y)  e\left(\phi_i(y)\right) dy
  = \int_{Y}^{\sqrt{2}Y} b(y)  e\left(\phi_i(y)\right) dy,
  \quad \textrm{for } i=1,2.
\end{equation*}
For $y\in[Y,\sqrt{2}Y]$, we have
\[
  |\phi_i^{(r)}(y)| \leq C_r T_1/M_1^{r}, \quad
  |b^{(s)}(y)| \leq C_s U_1/N_1^{s},
\]
where
\[
  T_1 = \max\left(\frac{T^2qgd_2'\delta'}{\delta_0f_2n_1'Y},(xY)^{1/3}\right),\quad
  M_1 = Y,\quad
  U_1 = \frac{1}{Y^{3j+4/3}},\quad
  N_1 = Y.
\]
Since we have
\[
  \phi_1'(y) = \frac{T^2qgd_2'\delta'}{4 \pi^2 \delta_0f_2n_1'y^2} + x^{1/3}y^{-2/3} \asymp T_1/M_1,
\]
by partial integration $r$ times, we have
\[
  \cW_{j,0}^+(x) \ll x^{2/3}\frac{U_1}{T_1^r} \ll \frac{x^{2/3}}{T_1^2} \frac{U_1}{T_1^{r-2}} \ll T^{-A},
\]
for sufficient large $r$. So $\cW_{j,0}^+(x)$ is negligible.

Now we turn to $\cW_{j,0}^-(x)$.  Since
\[
  \phi_2'(y) = \frac{T^2qgd_2'\delta'}{4 \pi^2 \delta_0f_2n_1'y^2} - x^{1/3}y^{-2/3},
\]
if
\begin{equation}\label{eqn: x><}
  x\geq \frac{T^6(qgd_2'n_1'f_2\delta_0)^3(n_1'f_2)^2}{10\pi^6 (\delta')^3 N^2},
  \quad \textrm{or}\quad
  x\leq \frac{T^6(qgd_2'n_1'f_2\delta_0)^3(n_1'f_2)^2}{1100\pi^6 (\delta')^3 N^2},
\end{equation}
one has
\[
  |\phi_2'(y)| \asymp T_1/M_1.
\]
By the same argument, we show $\cW_{j,0}^-(x)$ is negligible.
For the remaining case
\begin{equation}\label{eqn: x&l_2}
  \frac{T^6(qgd_2'n_1'f_2\delta_0)^3(n_1'f_2)^2}{1100\pi^6 (\delta')^3 N^2}
  \leq x \leq
  \frac{T^6(qgd_2'n_1'f_2\delta_0)^3(n_1'f_2)^2}{10\pi^6 (\delta')^3 N^2},
  \quad \textrm{i.e.}\quad
  \frac{L_2}{1100}\leq  l_2 \leq \frac{L_2}{10},
\end{equation}
with
\[
  L_2 = \frac{T^6(qgd_2'n_1'f_2\delta_0)^3(n_1'f_2)^3(c')^3}{\pi^6 (\delta')^3 N^2 l_1^2},
\]
we have
\[
  |\phi_2''(y)| \gg T_1/M_1^2,
\]
for any $y\in[Y,\sqrt{2}Y]$.
Note that in this case we have
\[
  T_1\asymp \frac{T^2qgd_2'\delta'}{\delta_0f_2n_1'Y},
  \quad \textrm{and}\quad
  x\asymp T_1^3/Y.
\]
Therefore, by the second derivative test (Lemma \ref{lemma: SP})
we have
\begin{equation}\label{eqn: cW_j<<}
  \cW_{j,0}^-(x) \ll \frac{x^{2/3} U_1}{(T_1/M_1^2)^{1/2}}
  \ll \frac{T_1^{3/2}}{Y^{3j+1}}
  \ll T^3 (qgd_2')^{3/2}(n_1'f_2)^{6j+\frac{7}{2}}\delta_0^{9j+\frac{9}{2}}
        \delta^{\frac{3}{2}}(\delta^3N)^{-\frac{3}{2}j-\frac{5}{4}}.
\end{equation}
Combining \eqref{eqn: cR_j^d2<<}, \eqref{eqn: VSF2}, and \eqref{eqn: cW_j<<},
and invoking the trivial bound for the Kloosterman sum in \eqref{eqn: VSF2},
one concludes that
\begin{equation*}
  \begin{split}
     \tilde{\cR}_j^{\dag,2}
     & \ll (qT)^\ve q^{3j+1} T^{4j} \delta^{3j+3/4}
        \sum_{\pm}\sum_{\delta_0\delta'=\delta} \frac{1}{\delta_0^{6j+2}}
        \sum_{\substack{gn_1'f_2\ll C/(q\delta_0) \\ (gn_1'f_2,q\delta)=1
                         \\(gn_1',f_2)=1,\mu^2(gn_1')=1}}
        \frac{g^{3j-1} }{f_2^{3j+2}(n_1')^{3j+3}}\\
     & \hskip 90pt \cdot  \sum_{\substack{d_2'\ll C/(q\delta_0gn_1'f_2)\\(d_2',qgn_1'f_2\delta)=1}}
        \frac{(d_2')^{3j}}{d_2'}  \sum_{b_1(d_2')} (b_1,d_2')
        \frac{1}{q}\sum_{b_2(q)}
        c' \sum_{l_1|c'n_1'f_2} \sum_{l_2=1}^{\infty}
        \frac{|A(l_2,l_1)|}{l_1l_2} \\
     & \hskip 150pt \cdot  \frac{n_1'f_2c'}{l_1}
        T^3 (qgd_2')^{3/2}(n_1'f_2)^{6j+\frac{7}{2}}\delta_0^{9j+\frac{9}{2}}
        \delta^{\frac{3}{2}}(\delta^3N)^{-\frac{3}{2}j-\frac{5}{4}} \\
     & \ll (qT)^\ve q^{3j+9/2} T^{4j+3} N^{-3j/2-5/4} \delta^{-3j/2+1/2}
         \\
     & \hskip 90pt \cdot \sum_{gn_1'f_2\delta_0d_2'\ll C/q}
        \frac{(gn_1'f_2\delta_0d_2')^{3j+7/2}}{gf_2(n_1')^{2}\delta_0}
        \sum_{l_1|c'n_1'f_2}\frac{1}{l_1^2} \sum_{l_2=1}^{\infty} \frac{|A(l_2,l_1)|}{l_2}.
   \end{split}
\end{equation*}
By \eqref{eqn: blomer}, we have
\begin{equation*}
  \tilde{\cR}_j^{\dag,2}
  \ll (qT)^\ve C^{3j+9/2} T^{4j+3} N^{-3j/2-5/4}  \delta^{-3j/2+1/2}.
\end{equation*}
And by \eqref{eqn: C} and \eqref{eqn: N}, we have
\begin{equation}\label{eqn: MT<<}
  \tilde{\cR}_j^{\dag,2}
  \ll (qT)^\ve \delta^{11/4}NT^{j-3/2}M^{-3j-9/2}
  \ll (qT)^\ve q^3 T^{j+3/2} M^{-3j-9/2}
  \ll (qT)^\ve q^{3},
\end{equation}
provided $T^{1/3+\ve}\leq M\leq T^{1/2}$.
This proves Proposition \ref{prop: t}, and hence, Theorem \ref{thm: t}.

\begin{proof}[Proof of Theorem \ref{thm: main}]
  One can use Theorem \ref{thm: q} and Theorem \ref{thm: t} directly to
  give a hybrid subconvexity bound with $\theta=1/70$.
  To get a better bound, that is, to prove Theorem \ref{thm: main}
  with $\theta=(35-\sqrt{1057})/56$, which we fix from now on,
  we will modify the proof of Theorem \ref{thm: t}.
  At first, note that if $q\geq T^{\theta/(1/4-\theta)}=T^{(\sqrt{1057}-23)/44}$,
  then
  \[ q^{5/4}T^{3/2} \leq (qT)^{3/2-\theta}. \]
  Hence in this case, Theorem \ref{thm: main} follows from Theorem \ref{thm: q}.
  Now we assume
  \begin{equation}\label{eqn: q Delta}
    q \leq T^{\theta/(1/4-\theta)}=T^{(\sqrt{1057}-23)/44} < T^{1/4}.
  \end{equation}
  As in \S\ref{sec: setup t}, we only need to prove
  \[
    \sum_{\substack{u_j\in\cB^*(q)\\ T-M\leq t_j\leq T+M}} L(1/2,\phi\times u_j\times\chi)
            + \frac{1}{4\pi}\int_{T-M}^{T+M}|L(1/2+it,\phi\times\chi)|^2dt
    \ll_{\phi,\ve} q^{3/2-\theta}TM(qT)^{\ve},
  \]
  provided
  \begin{equation}\label{eqn: M Delta}
    T^{1/2-\theta}\ll M\ll T^{1/2}.
  \end{equation}
  As in the proof of Proposition \ref{prop: t}, we only need to
  bound \eqref{eqn: error1}, \eqref{eqn: error2E},
  and \eqref{eqn: MT<<} under our new assumptions
  \eqref{eqn: q Delta} and \eqref{eqn: M Delta}.
  It's easy to see that the bound in \eqref{eqn: error1} will be
  $q^{1/2-\theta}TM(qT)^\ve$ now.
  Moreover, \eqref{eqn: error2E} is easy to handle too.
  Now we consider \eqref{eqn: MT<<}. By \eqref{eqn: M Delta}, we have
  \[
    \tilde{\cR}_j^{\dag,2}
    \ll (qT)^\ve q^3 T^{j+\frac{3}{2}} M^{-3j-\frac{9}{2}}
    \ll (qT)^\ve q^3 T^{\frac{3}{2}} M^{-\frac{9}{2}}
    \ll (qT)^\ve q^{\frac{1}{2}-\theta} q^{\frac{5}{2}+\theta} T^{\frac{9}{2}\theta-\frac{3}{4}}.
  \]
  So we want $q\leq T^{(\frac{3}{4}-\frac{9}{2}\theta)/(\frac{5}{2}+\theta)}$,
  which coincides with \eqref{eqn: q Delta} with our choice of $\theta$.
  Now we complete the proof of Theorem \ref{thm: main}.
\end{proof}

\medskip
\noindent
{\bf Acknowledgements} \; The author would like to thank Professors Dorian Goldfeld, Jianya Liu, Ze\'ev Rudnick, and Wei Zhang for their valuable advice and constant encouragement.
He also wishes to thank Professor Matthew Young for explaining some
details in his paper~\cite{young2014weyl}.
He wants to thank the anonymous referees and editors
for their kind comments and valuable suggestions.
This work was finished when he was visiting Columbia University.
He is grateful to the China Scholarship Council (CSC) for
supporting his studies at Columbia University.
He also wants to thank 
the Department of Mathematics at Columbia University for its hospitality.
This work is partly supported by NSFC grant 11531008 and
IRT\_16R43 from the Ministry of Education, China.




\end{document}